\renewcommand{\MR}[1]{}
\theoremstyle{plain}
\newtheorem{thm}{Theorem}[section]
\newtheorem{lem}[thm]{Lemma}
\newtheorem{pro}[thm]{Proposition}
\newtheorem{cor}[thm]{Corollary}
\newtheorem{thm*}{Theorem}
\newtheorem{lem*}[thm*]{Lemma}
\newtheorem{pro*}[thm*]{Proposition}
\theoremstyle{definition}
\newtheorem{dfn}[thm]{Definition}
\newtheorem{exa}[thm]{Example}
\newtheorem{rem}[thm]{Remark}
\numberwithin{equation}{section}
\DeclareMathOperator{\Hom}{Hom}
\DeclareMathOperator{\End}{End}
\DeclareMathOperator{\Ext}{Ext}
\DeclareMathOperator{\proj}{proj}
\DeclareMathOperator{\Spec}{Spec}
\DeclareMathOperator{\thick}{thick}
\DeclareMathOperator{\ann}{ann}
\DeclareMathOperator{\modu}{mod}
\DeclareMathOperator{\Mat}{Mat}
\newcommand{\lmod}[1]{\text{$#1$-$\modu$}}
\newcommand{\proje}[1]{\text{$#1$-$\mathrm{proj}$}}
\newcommand{\injec}[1]{\text{$#1$-$\mathrm{inj}$}}
\newcommand{\rep}[2]{R_{#1}{\left(#2\right)}}
\newcommand{\rankvar}[2]{\mathcal{D}_{#1}^{#2}}
\newcommand{\drankvar}[2]{\mathcal{C}_{#1}^{#2}}
\newcommand{\isoto}{\xrightarrow{\sim}}
\newcommand{\pr}{\mathrm{pr}}
\newcommand{\Gl}{\mathbf{Gl}}
\DeclareMathOperator{\id}{id}
\DeclareMathOperator{\idim}{id}
\DeclareMathOperator{\pdim}{pd}
\DeclareMathOperator{\Dim}{\rm \underline{dim}}
\DeclareMathOperator{\rk}{rk}
\newcommand{\oTo}{\xymatrix{ \ar@{^{(}->}[r]|{\mathbf{O}}& }} 
\newcommand{\cTo}{\xymatrix{ \ar@{^{(}->}[r]|{\mathbf{|}}& }} 
\newcommand{\coTo}{\xymatrix{ \ar@{^{(}->}[r]|{\mathbf{O}}|{\mathbf{|}}& }} 
\newcommand{\op}{\mathrm{op}}
\newcommand{\st}{\mathrm{st}}
\newcommand{\cost}{\mathrm{cst}}
\DeclareMathOperator{\Ker}{ker}
\DeclareMathOperator{\coKer}{coker}
\DeclareMathOperator{\Bild}{im}
\DeclareMathOperator{\rad}{rad }
\DeclareMathOperator{\add}{add}
\DeclareMathOperator{\gldim}{gldim}
\newcommand{\kdual}{\mathrm{D}}
\newcommand{\D}{\mathbb{D}}
\newcommand{\K}{\mathbb{K}}
\newcommand{\N}{\mathbb{N}}
\renewcommand{\P}{\mathbb{P}}
\newcommand{\Z}{\mathbb{Z}}
\newcommand{\mcC}{\mathcal{C}}
\newcommand{\mcH}{\mathcal{H}}
\newcommand{\mcK}{\mathcal{K}}
\newcommand{\mcO}{\mathcal{O}}
\newcommand{\mcQ}{\mathcal{Q}}
\newcommand{\mcS}{\mathcal{S}}
\newcommand{\mcT}{\mathcal{T}}
\DeclareMathOperator{\gen}{gen}
\DeclareMathOperator{\cogen}{cogen}
\DeclareMathOperator{\Gr}{Gr}
\newcommand{\Gra}[3]{\mathrm{Gr}_{#1}{\tbinom{#2}{#3}}}
\newcommand{\orbit}[1]{\mathcal{O}_{#1}}
\newcommand{\clorbit}[1]{\overline{\mathcal{O}}_{#1}}
\newcommand{\grstrat}[1]{\mathcal{E}^{[#1]}}
\newcommand{\clgrstrat}[1]{\overline{\mathcal{E}}^{[#1]}}
\newcommand{\gitquot}{/\hspace{-4pt}/}
\newcommand{\homot}[2][]{\mathcal{K}^{#1}(#2)}
\newcommand{\dercat}[2][]{\mathcal{D}^{#1}(#2)}
\newcommand{\bound}{\mathrm{b}}
\newcommand{\tang}[2]{\mathrm{T}_{#1}{#2}}
\renewcommand{\epsilon}{\varepsilon}
\renewcommand{\tilde}{\widetilde}
\newcommand{\tiltmod}[1]{T_{#1}}
\newcommand{\cotiltmod}[1]{C^{#1}}
\newcommand{\tiltalg}[1]{B_{#1}}
\newcommand{\cotiltalg}[1]{B^{#1}}
\begin{document}

\title[On quiver Grassmannians and orbit closures for gen-finite modules]{On quiver Grassmannians and orbit closures\\for gen-finite modules}

\author{Matthew Pressland}
\address{Matthew Pressland\\Institut f\"ur Algebra und Zahlentheorie\\Universit\"at Stuttgart\\Pfaffenwaldring~57\\70569 Stuttgart\\Germany}
\email{presslmw@mathematik.uni-stuttgart.de}

\author{Julia Sauter}
\address{Julia Sauter\\Fakult\"at f\"ur Mathematik\\Universit\"at Bielefeld\\Postfach 100 131\\D-33501 Bielefeld\\Germany}
\email{jsauter@math.uni-bielefeld.de}

\subjclass[2010]{Primary 16G10; Secondary 14L30, 14M15}
%
%
\keywords{Quiver Grassmannian, representation variety, tilting theory, desingularisation}



\begin{abstract} 
We show that endomorphism rings of cogenerators in the module category of a finite-dimensional algebra $A$ admit a canonical tilting module, whose tilted algebra $B$ is related to $A$ by a recollement. Let $M$ be a gen-finite $A$-module, meaning there are only finitely many indecomposable modules generated by $M$. Using the canonical tilts of endomorphism algebras of suitable cogenerators associated to $M$, and the resulting recollements, we construct desingularisations of the orbit closure and quiver Grassmannians of $M$, thus generalising all results from previous work of Crawley-Boevey and the second author in 2017. We provide dual versions of the key results, in order to also treat cogen-finite modules.
\end{abstract}
\maketitle

\section{Introduction}

A celebrated result of Hironaka \cite{Hir} states that any scheme $X$ over a field $\K$ of characteristic zero admits a desingularisation, meaning a map $f\colon Y\to X$ of schemes such that $Y$ is smooth and $f$ is an isomorphism over the non-singular locus of $X$. When $\K$ is algebraically closed and $X$ is projective, it follows from the work of several authors \cites{H1,H2,HZ,BHZ,DHZW,R3,Ri4,Ri5} that $X$ may be realised representation-theoretically as a quiver Grassmannian. Given a finite-dimensional basic algebra $A$ (which we realise as an admissible quotient of the path algebra of a quiver $Q$), an $A$-module $M$, and a dimension vector $d\in\N^{Q_0}$, the quiver Grassmannian $\Gra{A}{M}{d}$ is a projective variety parametrising the $A$-submodules of $M$ with dimension vector $d$. Given such a realisation $X\cong\Gra{A}{M}{d}$, our aim in this paper is to construct a desingularisation $f\colon Y\to X$ representation-theoretically from $A$, $M$ and $d$. We will achieve this aim under a certain condition on $M$, explained in the following paragraph, and the variety $Y$ will turn out to be a union of strata of other quiver Grassmannians.

A desingularisation of $\Gra{A}{M}{d}$ is constructed by Cerulli Irelli, Feigin and Reineke \cite{CIFR} in the case that $A$ is the path algebra of a Dynkin quiver, by Keller and Scherotzke \cite{KS2} for $A$ in a larger class including iterated tilted algebras, and by Crawley-Boevey and the second author \cite{CBSa} when $A$ is any representation-finite algebra. In this paper we work with arbitrary finite-dimensional algebras, instead placing a suitable finiteness condition on the module $M$ to be able to construct our desingularisation. Precisely, we ask that $M$ is \emph{gen-finite}, meaning that the category $\gen(M)$, consisting of quotient modules of finite direct sums of copies of $M$, has finitely many indecomposable objects up to isomorphism. If $A$ is representation-finite, as in \cites{CIFR,CBSa}, then every $A$-module is gen-finite, but there are also many more examples, such as the preinjective modules over any finite-dimensional hereditary $\K$-algebra.

We now sketch our construction, beginning with some generalities. If $B$ is a finite-dimensional algebra, and $e\in B$ is an idempotent such that $eBe=A$, then restricting $B$-modules to $A$-modules via the functor $L\mapsto eL$, which we also denote by $e$, provides an algebraic map $\Gra{B}{\tilde{M}}{d,s}\to\Gra{A}{M}{d}$ for any $s$ and any $B$-module $\tilde{M}$ with $e\tilde{M}=M$; we say here that a $B$-module $L$ has dimension vector $(d,s)$ if the dimension vector of $eL$ is $d$ and that of $(1-e)L$ is $s$. Our aim is to choose $B$ and $\tilde{M}$ carefully so that the quiver Grassmannian $\Gra{B}{\tilde{M}}{d,s}$ and the map $e$ have good properties.

When $M$ is gen-finite, we may take $E$ to be the minimal cogenerating $A$-module (with respect to number of indecomposable summands) such that every indecomposable object of $\gen(M)$ is isomorphic to a summand of $E$. In Section~\ref{s:special-tilt}, we explain how to construct an algebra $B$, derived equivalent to $\End_A(E)^{\op}$, together with an idempotent $e\in B$ such that $eBe$ is naturally isomorphic to $A$, so our first requirement is met. We call $B$ the \emph{cogenerator-tilted algebra} of $E$, and its construction generalises methodology of \cites{CIFR,KS2,CBSa} for the case that $A$ is representation-finite and $E$ is an additive generator of $\lmod{A}$, so that $\End_A(E)^{\op}$ is the  Auslander algebra of $A$; in our more general setting, the algebra $\End_A(E)^{\op}$ is typically not an Auslander algebra, and so is not so homologically well-behaved. In particular, whereas in the setting of \cites{CIFR,KS2,CBSa} the global dimension of $B$ never exceeds $2$, in our case it can be larger, requiring us to adapt our methods accordingly.

To construct the required $B$-module $\tilde{M}$ restricting to $M$, it is natural to consider functors $\varphi\colon \lmod{A}\to\lmod{B}$ such that $e\varphi$ is naturally isomorphic to the identity on $\lmod{A}$. As is the case for any idempotent, $e$ comes with two canonical such functors, given by the left and right adjoints, $\ell$ and $r$, of the restriction functor $\lmod{B}\to\lmod{A}$ along $e$. Following the approach of \cites{CIFR,KS2,CBSa}, we will not in fact use $\ell$ or $r$ to obtain $\tilde{M}$, but rather a third functor $c$, the intermediate extension \cite{Ku}, which is the image of a canonical natural transformation $\ell\to r$. This functor is better behaved than $\ell$ or $r$ when it comes to preserving rigidity properties, and we take $\tilde{M}=c(M)$; this module is readily computable in examples.

Now we are ready to describe our desingularisation of $\Gra{A}{M}{d}$ in terms of quiver Grassmannians of the $B$-module $c(M)$. Since $M$ is gen-finite, the irreducible components of $\Gra{A}{M}{d}$ are given by closed subsets $\clgrstrat{N_1},\dotsc,\clgrstrat{N_t}$ for some $N_i\in\lmod{A}$, where $\grstrat{N}\subseteq\Gra{A}{M}{d}$ is the subset of the Grassmannian consisting of submodules $U\leq M$ with $M/U\cong N$. Recalling our notation for dimension vectors of $B$, write $(d,s_i)$ for the dimension vector $\Dim{c(M)}-\Dim{c(N_i)}$. Thus we get a map $\Gra{B}{c(M)}{d,s_i}\to\Gra{A}{M}{d}$ for each $i$. Our main result is then the following.

\begin{thm*}[cf.~Corollary~\ref{Gr-desing}]
Working over an algebraically closed field $\K$, consider the maps $\Gra{B}{c(M)}{d,s_i}\to\Gra{A}{M}{d}$ constructed in the preceding paragraphs. Each such map restricts to a map $\clgrstrat{c(N_i)}\to\clgrstrat{N_i}$, and we may take the disjoint union of these to get a map
\[p\colon\bigsqcup_{i=1}^t\clgrstrat{c(N_i)}\to\bigcup_{i=1}^t\clgrstrat{N_i}=\Gra{A}{M}{d},\]
which is a desingularisation.
\end{thm*}

The key step in proving this theorem is provided by Theorem~\ref{Gr-smooth}, in which we show, using properties of the cogenerator-tilted algebra $B$ and the intermediate extension functor $c$, that the quiver Grassmannians $\Gra{B}{c(M)}{d,s_i}$ are smooth.

We note that, given a projective $\K$-variety $X$, there can be many different ways to realise $X$ as a quiver Grassmannian, so it is possible to put heavy restrictions on the triple $(A,M,d)$ while preserving the property that the quiver Grassmannians $\Gra{A}{M}{d}$ run over all projective varieties. For example, a straightforward observation is that one may restrict to the case that $A$ is a hereditary algebra---if $A=\K Q/I$ for a quiver $Q$ and admissible ideal $I$, we can think of $M$ and all of its submodules as $\K Q$-modules, all of which will automatically satisfy the relations in $I$, so $\Gra{A}{M}{d}=\Gra{\K Q}{M}{d}$. Reineke \cite{R3} has shown that one may assume that $M$ is a brick, i.e.\ that $\End_A(M)^\op=\K$, and Ringel \cite{Ri4} has shown that one may restrict to $A=\K Q$ for $Q$ an $n$-Kronecker quiver for some $n\geq 3$ (this being the quiver with vertex set $\{1,2\}$ and $n$ arrows from $1$ to $2$), $M$ a module having no injective direct summands, and $d=(1,1)$. Very recently, Ringel \cite{Ri6} has shown that it is even possible to fix $A$ completely, by choosing it to be the path algebra of a wild acyclic quiver. (On the other hand, the corresponding statement for more general wild algebras is not true \cite[\S4]{Ri5}.)

Thus, from this point of view, it is not clear whether or not our increased algebraic generality allows us to construct desingularisations of any more projective varieties when compared to prior work---to the best of our knowledge, the same is true when comparing the different algebraic generalities of results already in the literature. However, since our desingularisations, like those of \cites{CIFR,KS2,CBSa}, depend on the particular choice of $A$, $M$ and $d$ used to realise a given projective variety, we believe there is still value in enlarging the range from which this representation-theoretic data may be chosen. 

We also, analogous to \cite{CBSa}, construct desingularisations of the orbit closure of a gen-finite module $M$ over an arbitrary finite-dimensional algebra $A$, using similar techniques as for quiver Grassmannians. Desingularisations for these varieties have been constructed previously by Zwara \cite{Zw2}, but using rather different methods, and with a very different description. Denoting by $\rep{A}{d}$ the affine space of $A$-modules with dimension vector $d$, which carries an action of the product $\Gl_d=\prod_{i=1}^n\Gl_{d_i}(\K)$ of general linear groups, we denote by $\clorbit{M}$ the closure of the $\Gl_d$-orbit of $M$, taken in the Zariski topology, and it is this variety we now aim to desingularise.

Construct the cogenerator-tilted algebra $B$, and the idempotent $e$ with $eBe=A$, exactly as in the case of quiver Grassmannians (although in this case we may relax the minimality assumption on the cogenerator $E$, giving us more flexibility in the construction). Then $e\colon L\mapsto eL$ is an algebraic map $\rep{B}{d,s}\to\rep{A}{d}$ for any dimension vector $(d,s)$ for $B$. The natural group action on $\rep{B}{d,s}$ is by $\Gl_d\times\Gl_s$, and $e$ is constant on $\Gl_s$-orbits. We consider again the intermediate extension $c(M)$, choosing $(d,s)$ to be its dimension vector, and take its orbit closure $\clorbit{c(M)}\subseteq\rep{B}{d,s}$. We denote by $\clorbit{c(M)}^{\st}$ the set of stable $B$-modules in $\clorbit{c(M)}$, which in this case means those modules admitting a monomorphism to a direct sum of copies of the injective $B$-module $\Hom_\K(eB,\K)$. Our theorem is then as follows.

\begin{thm*}[Theorem~\ref{gen-finite-desing}] Let $\K$ be an algebraically closed field of characteristic zero. 
Let $A$ be a finite-dimensional algebra and $M$ a gen-finite $A$-module. Let $B$ be the cogenerator-tilted algebra of any cogenerating $A$-module $E$ such that every indecomposable quotient of $M$ is isomorphic to a direct summand of $E$. Then the distinguished idempotent $e\in B$ with $eBe=A$ induces a map
\[\pi_M\colon\clorbit{c(M)}^\st/\Gl_s\to\clorbit{M},\]
which is a desingularisation with connected fibres.
\end{thm*}

A key step in the proof, as in that of \cite{CBSa}, is to realise the orbit closure $\clorbit{M}$ as an affine quotient variety.

\begin{thm*}[cf.\ Theorem~\ref{affine-quot}]
\label{affine-quot-intro}
In the setting of Theorem~\ref{gen-finite-desing}, the distinguished idempotent $e\in B$ induces an isomorphism
\[\rep{B}{d,s}\gitquot\Gl_s\isoto\clorbit{M},\]
where $(d,s)$ is the dimension vector of $c(M)$.
\end{thm*}

In fact, Theorem~\ref{affine-quot} is a much more general statement than that given here, and states that any rank variety for $A$ (the orbit closures of gen-finite modules being a special case of such) may be realised as an affine quotient variety $\rep{B}{d,s}\gitquot\Gl_s$, for $B$ the cogenerator-tilted algebra of a suitable cogenerating $A$-module. Some of the ideas here date back to Kraft and Procesi \cite[\S3]{KP}, who obtain Theorem~\ref{affine-quot-intro} in the case that $A=\K[t]/(t^n)$ is the the truncated polynomial ring and $E$ is the basic additive generator of $\lmod{A}$. We provide further details concerning this case in Example~\ref{KP-eg}.

The structure of the paper is as follows. The construction of cogenerator-tilted algebras is given in Section~\ref{s:special-tilt}, and homological properties of these algebras determined in this and the following section. We then give the details of our desingularisation constructions, beginning with the slightly easier case of orbit closures in Section~\ref{orbit-closures}, and continuing with quiver Grassmannians in Section~\ref{quiver-Grassmannians}. Since our results apply to varieties defined by gen-finite modules, we briefly recall some constructions and properties of such modules in Section~\ref{s:gen-finite}, before closing in Section~\ref{s:example} with explicit examples in which $A$ is the path algebra of the $n$-subspace quiver, typically having wild representation-type.

Our results were announced by the second author in 2017 at the 50\textup{th} Symposium on Ring Theory and Representation Theory at the University of Yamanishi, and a summary (with the same title as this article) may be found in the associated proceedings volume \cite{PS2-proc}.

Throughout the paper, all algebras are finite-dimensional $\K$-algebras over some field $\K$ (assumed in Section~\ref{quiver-Grassmannians} to be algebraically closed and in Section~\ref{orbit-closures} to be algebraically closed and of characteristic zero), and without additional qualification `module' is taken to mean `finitely-generated left module'. We write $\kdual=\Hom_\K(-,\K)$ throughout for the standard duality over the ground field. Morphisms are composed from right-to-left.

\section{Special (co)tilting}
\label{s:special-tilt}

The goal of this section is to characterise certain tilting (and cotilting) modules which are generated by a projective summand (or respectively cogenerated by an injective summand). These modules will form the basis of our subsequent constructions. This section and the next, being purely homological, require no additional assumptions on the field $\K$.

\begin{dfn} \label{dfn-tilting}
Let $\Gamma$ be a finite-dimensional $\K$-algebra. Recall that $T\in\lmod{\Gamma}$ is a \emph{tilting module} (or sometimes \emph{classical tilting module}) if
\begin{itemize}
\item[(T1)] $\pdim{T}\leq1$,
\item[(T2)] $\Ext^1_\Gamma(T,T)=0$, and
\item[(T3)] there is an exact sequence $\begin{tikzcd}[column sep=20pt]0\arrow{r}&\Gamma\arrow{r}&T_0\arrow{r}&T_1\arrow{r}&0\end{tikzcd}$ with $T_i \in \add{T}$.
\end{itemize}
We say that $T$ is \emph{$P$-special}, for some projective $\Gamma$-module $P$, if $P\in\add{T}$ and the module $T_0$ in (T3) can be chosen to lie in $\add{P}$. Dually, $C\in\lmod{\Gamma}$ is a \emph{cotilting module} if 
\begin{itemize}
\item[(C1)] $\idim{C}\leq1$,
\item[(C2)] $\Ext^1_\Gamma(C,C) =0$, and
\item[(C3)] there an exact sequence $\begin{tikzcd}[column sep=20pt]
0\arrow{r}&C^1\arrow{r}&C^0\arrow{r}&\kdual\Gamma\arrow{r}&0
\end{tikzcd}$ with $C^i \in\add{C}$,
\end{itemize}
and we say $C$ is $Q$-special, for some injective $\Gamma$-module $Q$, if $Q\in\add{C}$ and $C^0$ in (C3) can be chosen to lie in $\add{Q}$. We say that a tilting module is \emph{special} if it is $P$-special for some $P$, and define \emph{special} cotilting modules analogously.
\end{dfn}

\begin{pro}
\label{p:uniqueness}
If $T$ and $T'$ are $P$-special tilting modules, then $\add{T}=\add{T'}$. In particular, any two basic $P$-special tilting modules are isomorphic. The analogous results hold for $Q$-special cotilting modules.
\end{pro}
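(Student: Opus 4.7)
My plan is to show that for any $P$-special tilting module $T$, the generation class $\gen T$ equals $\gen P$, so that it depends only on $P$; two $P$-special basic tilting modules will then have the same generation class and will therefore be isomorphic by the classical tilting theorem.

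The key step is to verify that $T \in \gen P$. Applying $\Hom_\Gamma(-, T)$ to the defining sequence from (T3), namely $0 \to \Gamma \to T_0 \to T_1 \to 0$ with $T_0 \in \add P$ (by specialness) and $T_1 \in \add T$, produces a long exact sequence in which $\Ext^1_\Gamma(T_0, T) = 0$ (because $T_0 \in \add P$ is projective) and $\Ext^1_\Gamma(T_1, T) = 0$ (because $T_1 \in \add T$ and $\Ext^1_\Gamma(T, T) = 0$ by (T2)). Hence the induced map $\Hom_\Gamma(T_0, T) \twoheadrightarrow \Hom_\Gamma(\Gamma, T) = T$ is surjective, and lifting a finite generating set of $T$ along this map assembles a surjection $T_0^k \twoheadrightarrow T$ for some $k$. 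Since $T_0 \in \add P$, this shows $T \in \gen P$. Combined with $\gen P \subseteq \gen T$, which is immediate from $P \in \add T$, this gives $\gen T = \gen P$.

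Applied to a second $P$-special tilting module $T'$, the argument likewise yields $\gen T' = \gen P = \gen T$, and since a basic tilting module is determined up to isomorphism by its torsion class, one concludes $\add T = \add T'$, proving the first assertion. Alternatively, and more concretely, one may argue that $T' \in \gen P$ forces $\Ext^1_\Gamma(T, T') = 0$ (since $\Ext^1_\Gamma(T, P) = 0$ for $P \in \add T$, so $\gen P$ is contained in $\{M : \Ext^1_\Gamma(T, M) = 0\}$ by the long exact sequence of a surjection $P^\ell \twoheadrightarrow T'$ and the fact that $\pdim T \leq 1$), and symmetrically $\Ext^1_\Gamma(T', T) = 0$; thus $T \oplus T'$ is a partial tilting module, but a basic tilting module already attains the maximum possible number $n = \rk K_0(\Gamma)$ of non-isomorphic indecomposable summands, giving $\add T' \subseteq \add T$ and, by symmetry, equality. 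The cotilting case follows by $\K$-linear duality: $\kdual\colon\lmod\Gamma \to \rmod\Gamma$ converts a $Q$-special cotilting $\Gamma$-module into a $\kdual Q$-special tilting $\Gamma^{\op}$-module (noting that $\kdual Q$ is projective since $Q$ is injective, and that dualising the sequence in (C3) yields the required sequence for (T3)), so applying the just-proved tilting assertion to $\Gamma^{\op}$ and pulling back recovers the cotilting claim. No substantial obstacle is anticipated; the only delicate point is the appeal to tilting theory in the final step, which is well established.
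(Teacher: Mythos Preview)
Your proof is correct and follows essentially the same route as the paper: both arguments hinge on showing $\gen T=\gen P=\gen T'$, and then appeal to the fact that tilting modules are determined (up to $\add$) by their torsion class. The paper's proof is terser about why $\gen T=\gen P$ (stating it as an immediate consequence of the (T3) sequence having middle term in $\add P$), while you spell out the lifting argument explicitly; conversely, for the final step the paper cites Smal\o's result on Ext-projectives of a faithful torsion class, whereas you invoke the equivalent classical fact directly (or your alternative counting argument via $\Ext^1(T,T')=0$). These are cosmetic differences rather than genuinely distinct strategies.
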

\begin{proof}
Since the middle term in the exact sequence from (T3) may be chosen to lie in $\add{P}$ in both cases, we have $\gen(T)=\gen(P)=\gen(T')$, so write $\mcT$ for this subcategory. The sequence in (T3) also provides a monomorphism from $\Gamma$ to an object of $\add{P}$, so $P\in\mcT$ is faithful and $\ann(\mcT)=0$. Hence by \cite{Sma}, the direct sum $T_0$ of indecomposable Ext-projectives in $\mcT$ is a tilting $\Gamma$-module. But the tilting modules $T$ and $T'$ are also Ext-projective in $\mcT$. Since any two tilting modules have the same number of pairwise non-isomorphic direct summands, it follows that $\add{T}=\add{T_0}=\add{T'}$. The statement for cotilting modules is proved dually.
\end{proof}

\begin{dfn}
\label{special-tilt-notation}
Let $\Gamma$ be a finite-dimensional algebra, $P$ a projective $\Gamma$-module and $Q$ an injective $\Gamma$-module. Bearing in mind Proposition~\ref{p:uniqueness}, we denote (when they exist) the unique basic $P$-special tilting module by $\tiltmod{P}$ and the unique basic $Q$-special cotilting module by $\cotiltmod{Q}$. Their endomorphism algebras are denoted by
\[\tiltalg{P}=\End_\Gamma(\tiltmod{P})^{\op},\qquad\cotiltalg{Q}=\End_\Gamma(\cotiltmod{Q})^{\op}.\]
\end{dfn}

\begin{rem}
If $\add{P}=\add{P'}$, then there is a $P$-special tilting module if and only if there is a $P'$-special tilting module, and $\tiltmod{P}=\tiltmod{P'}$. The analogous statement also holds for $Q$-special cotilting modules, so without loss of generality we may always assume that $P$ and $Q$ in Definitions~\ref{dfn-tilting} and \ref{special-tilt-notation} are basic.
\end{rem}

\begin{exa}
\label{special-tilt-egs}
\begin{itemize}
\item[(1)] The first examples of tilting modules were APR-tilting modules \cite{APR}. Let $\Gamma $ be the path algebra of an acyclic quiver and $a$ a sink in the quiver, with at least one incoming arrow. Let $P=\bigoplus_{i\neq a} P(i)$. 
Then the unique basic $P$-special tilting module $T_P=P\oplus \tau^- S(a)$ is precisely the APR-tilting module. 

\item[(2)] The $1$-shifted tilting module and the $1$-coshifted cotilting module of an algebra $\Gamma$ of positive dominant dimension, studied by the authors in \cite{PS1} (see also \cites{HX,NRTZ}) are $\Pi$-special, where $\Pi$ additively generates the category of projective-injective $\Gamma$-modules.

\item[(3)] The characteristic tilting module $T$ of a right ultra-strongly quasihereditary algebra 
is special cotilting, by a theorem of Conde stated in the introduction to \cite{Conde}. In the notation of loc.\ cit., in which the indecomposable injective modules are indexed by pairs $(i,j)$ with $j\leq\ell_i$ for some $\ell_i\in\N$, the module $T$ is special cotilting for the injective $Q=\bigoplus_iQ_{i,\ell_i}$, the theorem showing that each indecomposable injective $Q_{i,j}$ with $j<\ell_i$ fits into an exact sequence
\[\begin{tikzcd}[column sep=20pt]
0\arrow{r}&T(i,j)\arrow{r}&Q_{i,\ell_i}\arrow{r}&Q_{i,j}\arrow{r}&0
\end{tikzcd}\]
for $T(i,j)$ a summand of $T$. Dually, the characteristic tilting module of a left ultra-strongly quasihereditary algebra is special tilting.
\end{itemize}
\end{exa}

\begin{rem}
\label{end-specialtilt}
Let $\Gamma $ be a finite-dimensional algebra, $Q\in\lmod{\Gamma}$ an injective module and $P\in\lmod{\Gamma}$ a projective module. Assume that there exists a $Q$-special cotilting module and a $P$-special tilting module. As usual, we denote the unique basic such modules by $C^Q$ and $T_P$, and their endomorphism algebras by $B^Q$ and $B_P$.

\begin{itemize}
\item[(1)]Since $Q\in\add{C^Q}$, the $B^Q$-module $\widetilde{P}= \Hom_{ \Gamma} (C^Q, Q)$ is projective. It then follows by applying $\Hom_\Gamma(C^Q,-)$ to the exact sequence in (C3) that $\kdual C^Q$ is the unique basic $\widetilde{P}$-special tilting $B^Q$-module.
 
\item[(2)]Dually, the $B_P$-module $\widetilde{Q}=\kdual  \Hom_{ \Gamma} (P , T_P )$ is injective, and $\kdual T_P$ is the unique basic $\widetilde{Q}$-special cotilting $B_P$-module. 

\end{itemize}
\end{rem}

The following lemma provides our most important source of special tilting and cotilting modules.

\begin{lem} \label{special-tilt}
Let $A$ be a finite-dimensional algebra and $E$ a finite-dimensional $A$-module. Write $\Gamma=\End_\Gamma(E)^{\op}$.

\begin{itemize}
\item[(1)] If $E$ is a cogenerator, then $P=\kdual E$ is a projective $\Gamma$-module and there is a unique basic $P$-special tilting module $T_P$ for $\Gamma $. Moreover, $\End_\Gamma(P)^{\op}\cong A$.

\item[(2)] If $E$ is a generator, then $Q=\kdual E$ is an injective $\Gamma $-module and there is a unique basic $Q$-special cotilting module $C^Q$ for $\Gamma $. Moreover, $\End_\Gamma(Q)^{\op}\cong A$.
\end{itemize}
\end{lem}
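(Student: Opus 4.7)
My plan is to use the covariant left-exact functor $F = \Hom_A(E,-)\colon\lmod{A}\to\lmod{\Gamma}$, whose restriction to $\add E$ gives an equivalence onto $\proje{\Gamma}$. For~(1), the cogenerator hypothesis $\kdual A\in\add E$ immediately gives $P = \kdual E \cong F(\kdual A) \in \add F(E) = \add\Gamma$, so $P$ is projective, and full faithfulness of $F|_{\add E}$ yields $\End_\Gamma(P) \cong \End_A(\kdual A) \cong A^{\op}$, whence $\End_\Gamma(P)^{\op}\cong A$. Faithfulness of $P$ is by a direct check: any nonzero $\psi\in\End_A(E)$ admits some $f\in P$ with $f\circ\psi\neq 0$, since the injective cogenerator $\kdual A$ of $\lmod{A}$ separates the nonzero submodule $\psi(E)\subseteq E$.

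To construct $T_P$, I would take an injective envelope $\iota\colon E\hookrightarrow I_E$ in $\lmod{A}$; since $I_E$ is injective, $I_E\in\add\kdual A\subseteq\add E$. Applying $F$ and using $\Ext^1_A(E,I_E)=0$ produces an exact sequence $0\to\Gamma\to F(I_E)\to T_1\to 0$ in $\lmod{\Gamma}$ with $F(I_E)\in\add P$, where $T_1$ is the image of $F(I_E)\to F(I_E/E)$. Since both $\Gamma$ and $F(I_E)$ are projective, $\pdim T_1\leq 1$. Taking $T_P$ to be the basic version of $P\oplus T_1$, axiom~(T1) is clear and (T3) is exactly the constructed sequence. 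For~(T2) it suffices to verify $\Ext^1_\Gamma(T_1,Y)=0$ for every $Y\in\gen P$.

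The verification of (T2) is the main obstacle. One first checks $\Ext^1_\Gamma(T_1,P)=0$: via the long exact sequence of $\Ext$ applied to the defining SES, this reduces to surjectivity of $\Hom_\Gamma(F(I_E),P)\to \Hom_\Gamma(\Gamma,P)=P$, which through the equivalence $F|_{\add E}$ becomes surjectivity of $\Hom_A(I_E,\kdual A)\to\Hom_A(E,\kdual A)$---precisely the injectivity of $\kdual A$. For general $Y\in\gen P$, a presentation $0\to K\to P^n\to Y\to 0$, together with the LES and the bound $\pdim T_1\leq 1$ (killing $\Ext^2_\Gamma(T_1,K)$), then yields the desired vanishing. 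Uniqueness of $T_P$ now follows from Proposition~\ref{p:uniqueness}.

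Part~(2) is formally dual. One works with the covariant right-exact functor $G = \kdual\Hom_A(-,E)\colon\lmod{A}\to\lmod{\Gamma}$, whose restriction to $\add E$ is an equivalence onto $\injec{\Gamma}$, with $G(E)\cong\kdual\Gamma$. The generator hypothesis $A\in\add E$ gives $Q = \kdual E \cong G(A) \in \add G(E) = \add\kdual\Gamma$, so $Q$ is injective, and $\End_\Gamma(Q)^{\op}\cong A$; faithfulness of $Q$ is by the same argument as for $P$. Taking a projective cover $P_E\twoheadrightarrow E$ with $P_E\in\add A$ in place of the injective envelope, and applying $G$ using $\Ext^1_A(P_E,E)=0$, yields an SES $0\to C^1\to G(P_E)\to\kdual\Gamma\to 0$ with $G(P_E)\in\add Q$ and $\idim C^1\leq 1$ (both flanking terms being injective); the dual Ext-computation (reducing, via $G|_{\add E}$, to surjectivity of $\Hom_A(A,P_E)\to\Hom_A(A,E)$, which holds since $P_E\twoheadrightarrow E$) shows $C^Q := Q\oplus C^1$ is $Q$-special cotilting, and uniqueness again follows from Proposition~\ref{p:uniqueness}.
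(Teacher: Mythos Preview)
Your proof is correct and follows essentially the same approach as the paper: both take an injective envelope $E\hookrightarrow I_E$, apply $\Hom_A(E,-)$ to obtain the sequence $0\to\Gamma\to P_0\to T_1\to 0$ with $P_0\in\add P$, and verify the tilting axioms. The only notable difference is in the verification of (T2): the paper shows $\Ext^1_\Gamma(T_1,P_0)=0$ and then deduces $\Ext^1_\Gamma(T_1,T_1)=0$ via a commutative diagram chase, whereas you prove the stronger statement $\Ext^1_\Gamma(T_1,Y)=0$ for all $Y\in\gen P$ by using $\pdim T_1\leq 1$ to kill the $\Ext^2$ term in the long exact sequence coming from a presentation $0\to K\to P^n\to Y\to 0$; this is a clean alternative to the diagram argument.
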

\begin{proof}
We prove only (1), statement (2) being dual. First, observe that
\[P=\kdual E=\Hom_A(E,\kdual A)\]
is projective, since $E$ is a cogenerator so $\kdual A\in\add{E}$. By Proposition~\ref{p:uniqueness}, it is enough to show the existence of a $P$-special tilting module. Let $f\colon E\to Q(E)$ be an injective envelope. Applying $\Hom_A(E,-)$ to this map and taking the cokernel yields a short exact sequence
\begin{equation}
\label{Pspecial-sequence}
\begin{tikzcd}[column sep=20pt]
0\arrow{r}&\Gamma\arrow{r}&P_0\arrow{r}&T_1\arrow{r}&0.
\end{tikzcd}
\end{equation}
Moreover,
\[P_0=\Hom_A(E,Q(E))\in\add\Hom_A(E,\kdual A)=\add{P}.\]
Let $T=P_0\oplus T_1$; we claim that $T$ is a $P$-special tilting module. Since $P_0$ is projective, property (T1) is provided by sequence \eqref{Pspecial-sequence}. Property (T3) is immediate from \eqref{Pspecial-sequence}. To show that (T2) holds, it is enough to prove that
\begin{itemize}
\item[(i)]$\Ext^1_\Gamma(T_1,P_0)=0$, and
\item[(ii)]$\Ext^1_\Gamma(T_1,T_1)=0$,
\end{itemize}
since $P_0$ is projective. For (i), apply $\Hom_\Gamma(-,P_0)$ to \eqref{Pspecial-sequence} to obtain an exact sequence
\[\begin{tikzcd}[column sep=20pt]
0\arrow{r}&\Hom_\Gamma(T_1,P_0)\arrow{r}&\Hom_\Gamma(P_0,P_0)\arrow{r}{g}&\Hom_\Gamma(\Gamma,P_0)\arrow{r}&\Ext^1_\Gamma(T_1,P_0)\arrow{r}&0.
\end{tikzcd}\]
We wish to show that $g$ is surjective. Consider the commutative diagram
\[\begin{tikzcd}[column sep=70pt]\
\Hom_\Gamma(P_0,P_0)\arrow{r}{g}\arrow{d}[anchor=north,sloped,above]{\sim}&\Hom_\Gamma(\Gamma,P_0)\arrow{d}[anchor=north,sloped,above]{\sim}\\
\Hom_A(Q(E),Q(E))\arrow{r}{\Hom_A(f,Q(E))}&\Hom_A(E,Q(E)).
\end{tikzcd}\]
in which the vertical maps are isomorphisms from Yoneda's lemma. Since $f\colon E\to Q(E)$ is an injective envelope of $E$, the map $\Hom_A(f,Q(E))$ is surjective, so $g$ is also surjective as required.

We now show that (ii) follows from (i). Applying various Hom-functors to sequence \eqref{Pspecial-sequence} yields the commutative diagram
\[\begin{tikzcd}[column sep=20pt]
0\arrow{r}&\Hom_\Gamma(T_1,P_0)\arrow{r}\arrow{d}&\Hom_\Gamma(P_0,P_0)\arrow{r}{g}\arrow{d}&\Hom_\Gamma(\Gamma,P_0)\arrow{r}\arrow{d}{q}&0\\
0\arrow{r}&\Hom_\Gamma(T_1,T_1)\arrow{r}&\Hom_\Gamma(P_0,T_1)\arrow{r}{p}&\Hom_\Gamma(\Gamma,T_1)\arrow{r}&\Ext^1_\Gamma(T_1,T_1)\arrow{r}&0.
\end{tikzcd}\]
By (i) we know that $g$ is surjective, and $q$ is also surjective since $\Gamma$ is projective. Thus $p$ is surjective, and (ii), and hence (T2), follows.

Since $E$ is a cogenerator, $\kdual{A}$ is a summand of $Q(E)$, and so $P\in\add{P_0}\subseteq\add{T}$. This, together with \eqref{Pspecial-sequence}, shows that $T$ is $P$-special.

The final statement follows by Yoneda's lemma, since
\[\End_\Gamma(P)^{\op}=\End_\Gamma(\Hom_A(E,\kdual A))^{\op}=\End_A(\kdual A)^{\op}=A,\]
using that $E$ is a cogenerator so $\kdual A\in\add{E}$.
\end{proof}

\begin{dfn}
Let $E\in\lmod{A}$ and $\Gamma=\End_A(E)^{\op}$.

\begin{itemize}
\item[(1)]If $E$ is a cogenerator, let $P$ be as in Lemma~\ref{special-tilt}(1). We call $B_P=\End_\Gamma(T_P)^\op$ the \emph{cogenerator-tilted algebra} of $E$, and the idempotent $e\in B_P$ given by projection onto $P$ is called \emph{special}.
\item[(2)]If $E$ is a generator, let $Q$ be as in Lemma~\ref{special-tilt}(2). We call $B^Q=\End_\Gamma(C^Q)^\op$ the \emph{generator-cotilted algebra} of $E$, and the idempotent $e\in B^Q$ given by projection onto $Q$ is called \emph{special}.
\end{itemize}
\end{dfn}

It follows from Lemma~\ref{special-tilt} that, for either $B=B_P$ or $B=B^Q$, the idempotent subalgebra $eBe$ defined by the special idempotent $e$ is isomorphic to $A$. Furthermore, it follows from Remark~\ref{end-specialtilt} that the canonical tilting $B^Q$-module $\kdual C^Q$ is the unique $B^Qe$-special tilting module and the canonical cotilting $B_P$-module $\kdual T_P$ is the unique $\kdual(eB_P)$-special cotilting module. We note that if $E$ is both a generator and a cogenerator then, in the terminology of \cite{PS1}, the cogenerator-tilted algebra of $E$ is the $1$-shifted algebra of $\Gamma$, and the generator-cotilted algebra of $E$ is the $1$-coshifted algebra of $\Gamma$.

\begin{exa}
Let $A$ be a finite-dimensional algebra, and fix a natural number $L$ such that $\rad^L(A)=0$. Let $E$ be a basic $A$-module such that
\[\add{E}=\add\left(\bigoplus_{i=1}^LA/\rad^i(A)\right),\]
and write $R_A=\End_{A}(E)^{\op}$. This construction is due to Auslander \cite{ART1}, and Dlab and Ringel showed that $R_A$ is quasihereditary \cite{DR-ADR}; hence $R_A$ is often called the ADR-algebra of $A$. Since $A/\rad^L(A)=A$, the module $E$ is a generator, and so $R_A$ admits a unique basic $\kdual E$-special cotilting module $C$ by Lemma~\ref{special-tilt}.

The algebra $R_A$ is even right ultra-strongly quasihereditary \cite{Conde}, so by Example~\ref{special-tilt-egs}(3), its characteristic tilting module is special cotilting for an injective module $Q=\bigoplus_iQ_{i,\ell_i}$. By \cite[Lem.~4.4]{Conde}, $Q=\kdual E$, and so in fact the characteristic tilting module is the special cotilting module $C$ from Lemma~\ref{special-tilt}. Thus in this case the generator-cotilted algebra of $E$ is, by definition, the Ringel dual of the quasihereditary algebra $R_A$ \cite[\S6]{Ri2}.
\end{exa}

Let $B$ be a finite-dimensional algebra, let $e\in B$ be an idempotent element and write $A=eBe$. We obtain from $e$ a diagram
\begin{equation}
\label{recollement}
\begin{tikzcd}[column sep=40pt]
\lmod{B/B eB}\arrow["i" description]{r}&\lmod{B}\arrow["e" description]{r}\arrow[shift right=3,swap]{l}{q}\arrow[shift left=3]{l}{p}&\lmod{A}\arrow[shift right=3,swap]{l}{\ell}\arrow[shift left=3]{l}{r}
\end{tikzcd}
\end{equation}
of six functors, defined by 
\[
\begin{aligned}
q & = B /B eB \otimes_{B } (-),  &&& \ell &= B e\otimes_A -,  \\
i & = B /B eB \otimes_{B /B eB } (-),   &&&  e &= \Hom_{B }(B e,-)=eB\otimes_B-,   \\
p & = \Hom_{B}(B/BeB , -),  &&& r &= \Hom_{A} (eB,- ).
\end{aligned}
\]
Such data is known as a recollement of abelian categories, and can be defined in abstract, but we will only consider recollements of module categories determined by idempotents as above (cf.\ \cite{PV}). For a $B$-module $M$, one obtains the same $A$-module $eM$ either by applying the functor $e$ in this diagram, or by multiplying on the left by the idempotent $e$, hence the abuse of notation. Since $\ell$ and $r$ are left and right adjoints of $e$ respectively, and $e\ell=er=\id$, there is a natural isomorphism
\[\Hom_B(\ell M,rM)\isoto\Hom_A(M,M),\]
functorial in $M$, and so determining a canonical map of functors $\ell\to r$. This map is equivalently described as the composition of the counit of the adjunction $\ell\dashv e$ with the unit of the adjunction $e\dashv r$. Taking its image yields  a seventh functor $c\colon \lmod{A} \to \lmod{B}$, called the intermediate extension \cite{Ku}. This functor will be particularly important in our geometric constructions, and so much of the algebraic part of the paper is devoted to studying it.

We recall from \cite{PS1} a description of the images and kernels of some of the functors appearing in the above recollement. This description uses the following notation.

\begin{dfn}
\label{d:gen-cogen}
Let $X\in A$-$\modu$ be a module. We write $\gen(X)$ for the full subcategory of $\lmod{A}$ consisting of modules admitting an epimorphism from an object of $\add{X}$, and $\gen_1(X)$ for the full subcategory of $\lmod{A}$ consisting of modules $Z$ fitting into an exact sequence
\[\begin{tikzcd}[column sep=20pt]
X_1\arrow{r}&X_0\arrow{r}&Z\arrow{r}&0
\end{tikzcd}\]
such that $X_i\in\add{X}$ and
\[\begin{tikzcd}[column sep=20pt]
\Hom_A(X,X_1)\arrow{r}&\Hom_A(X,X_0)\arrow{r}&\Hom_A(X,Z)\arrow{r}&0
\end{tikzcd}\]
is exact. We define $\cogen(X)$ and $\cogen^1(X)$ dually.
\end{dfn}

\begin{lem}[{\cite[Lem.~4.1]{PS1}}] \label{ImageAndKernel}
In the context of the idempotent recollement \eqref{recollement}, write $P=Be$ and $Q=\kdual(eB)$. Then
\begin{align*}
 \Ker q = \gen (P)&\supseteq\gen_1(P)=\Bild \ell, \\
\Ker p = \cogen (Q)&\supseteq\cogen^1(Q)=\Bild r.
\end{align*}
Moreover, the image of the intermediate extension $c=\Bild(\ell\to r)$ is given by
\[ 
\Bild c = \Ker p \cap \Ker q = \gen (P) \cap \cogen (Q).
\]
\end{lem}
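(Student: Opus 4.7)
The plan is to verify each assertion in turn, exploiting the standard structure of the idempotent recollement: the functor $e$ is exact, the counit $\varepsilon_M \colon \ell eM \to M$ and unit $\eta_M \colon M \to reM$ satisfy the triangle identities $e\varepsilon_M = \id_{eM}$ and $e\eta_M = \id_{eM}$, and both $e\ell$ and $er$ are the identity. I will also use the explicit formulas $qM = M/BeBM$ and $pM = \{m \in M : BeBm = 0\}$.

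For $\Ker q = \gen(P)$, I will observe that $BeBM = B(eM) = \sum_{m \in eM}Bm$, and that each cyclic submodule $Bm$ with $m \in eM$ is a quotient of $Be = P$ via $be \mapsto bm$; hence $qM = 0$ if and only if $M \in \gen(P)$. For $\gen_1(P) = \Bild \ell$, given $M = \ell N$ I will apply $\ell$ to a projective presentation of $N$ to obtain a presentation $P^m \to P^n \to M \to 0$ whose image under $e$ (using $eP = A$ and $e\ell = \id$) recovers the original, placing $M$ in $\gen_1(P)$. Conversely, given a witnessing presentation for $M \in \gen_1(P)$, I will apply $e$ then $\ell$ to produce $P^m \to P^n \to \ell eM \to 0$ and compare it with the original presentation; the counit $\varepsilon_M$ furnishes a map of exact sequences with identity on the first two columns, and the five lemma then forces $\varepsilon_M$ to be an isomorphism.

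The dual assertions $\Ker p = \cogen(Q)$ and $\cogen^1(Q) = \Bild r$ will follow by applying the $\K$-duality $\kdual \colon \lmod{B} \to \lmod{B^{\op}}$, which exchanges the recollement data for $B$ with that for $B^{\op}$, swapping $P = Be$ with the analogous projective $eB$ for $B^{\op}$, $q$ with $p$, $\ell$ with $r$, and $\gen$ with $\cogen$; the two previous identities applied to $B^{\op}$ then translate to the required statements for $B$.

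For the final claim $\Bild c = \Ker p \cap \Ker q$, one inclusion is immediate: $cN$ is a quotient of $\ell N \in \Bild \ell \subseteq \gen(P) = \Ker q$ and a submodule of $rN \in \Bild r \subseteq \cogen(Q) = \Ker p$, and both subcategories are closed under quotients, respectively submodules. For the reverse, I will take $M \in \Ker p \cap \Ker q$. Then the image of $\varepsilon_M$ equals $BeBM = M$, so $\varepsilon_M$ is surjective, and $\ker \eta_M = pM = 0$, so $\eta_M$ is injective. The composite $\eta_M \circ \varepsilon_M \colon \ell eM \to reM$ satisfies $e(\eta_M \circ \varepsilon_M) = \id_{eM}$ by the triangle identities, so it agrees with the natural transformation $\ell \to r$ evaluated at $eM$---the latter being characterised, via the adjunction isomorphism $\Hom_B(\ell(eM), r(eM)) \cong \End_A(eM)$, as the unique preimage of $\id_{eM}$. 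Its image is therefore $c(eM)$, but it is also $\eta_M(\Bild \varepsilon_M) = \eta_M(M) \cong M$. Hence $M \cong c(eM) \in \Bild c$. The key subtlety is the identification of $\eta_M\circ\varepsilon_M$ with the defining map $\ell(eM) \to r(eM)$ of $c(eM)$; everything else is essentially formal diagram chasing.
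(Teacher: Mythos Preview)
The paper does not supply its own proof of this lemma; it is simply quoted from \cite[Lem.~4.1]{PS1}. Your argument is correct and is essentially the standard one: explicit description of $qM=M/BeBM$ and $pM=\{m:BeBm=0\}$ identifies their kernels with $\gen(P)$ and $\cogen(Q)$; right-exactness of $\ell$ together with $e\ell=\id$ and the fact that the counit $\varepsilon_{P'}$ is an isomorphism for $P'\in\add P$ pins down $\Bild\ell=\gen_1(P)$; duality handles the statements for $r$; and the factorisation $\alpha_{eM}=\eta_M\circ\varepsilon_M$ (verified via $e(\eta_M\circ\varepsilon_M)=\id_{eM}$ and the adjunction characterisation of $\alpha$) gives $\Bild c=\Ker p\cap\Ker q$.

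One small remark on presentation: when proving $\gen_1(P)\subseteq\Bild\ell$ you write the witnessing sequence as $P^m\to P^n\to M\to 0$, whereas the definition of $\gen_1(P)$ only provides $P_1\to P_0\to M\to 0$ with $P_i\in\add P$. This is harmless---either replace $P^m,P^n$ by arbitrary $P_i\in\add P$ throughout (the counit is still an isomorphism on $\add P$ by additivity), or first absorb complements to reduce to the free case---but it is worth saying explicitly.
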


The main conclusion of the algebraic part of the paper is the following theorem, which we will prove at the end of Section~\ref{homot-end-rings}.
\begin{thm}[{cf.\ \cite[Thm.~6.3, Thm.~6.5]{PS1}}]
\label{int-ext-calcs}
Let $A$ be a finite-dimensional algebra, and $E$ an $A$-module. We write $\Gamma = \End_A(E)^{\op}$. 
\begin{itemize}
\item[(1)]Assume $E$ is cogenerating, let $B$ be its cogenerator-tilted algebra with special idempotent $e$, and let $c$ be the intermediate extension functor corresponding to $e$. Write $P=\kdual E$, so that $B=\End_{\Gamma}(T_P)^{\op}$. Then
\[ c(E) \cong \kdual T_P\quad \text{ and }\quad c(\kdual A) \cong\kdual(eB).\]
In particular, $c(\kdual A)$ is injective, and $c(E)$ is the canonical cotilting module for the tilted algebra $B$, which is the unique $\kdual(eB)=c(\kdual A)$-special cotilting module by Remark~\ref{end-specialtilt}.

\item[(2)]Assume $E$ is generating, let $B$ be its generator-cotilted algebra with special idempotent $e$, and let $c$ be the intermediate extension functor corresponding to $e$. Write $Q=\kdual E$, so that $B=\End_{\Gamma}(C^Q)^{\op}$. Then
\[c(E)\cong\kdual C^Q\quad\text{ and }\quad c(A)\cong Be.\]
In particular, $c(A)$ is projective, and $c(E)$ is the canonical tilting module for the cotilted algebra $B$, which is the unique $Be=c(A)$-special tilting module by Remark~\ref{end-specialtilt}.
\end{itemize}
\end{thm}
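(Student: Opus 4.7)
I prove part (1); part (2) is entirely dual, obtained by interchanging generator with cogenerator, tilting with cotilting, and projective with injective throughout.

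The strategy is to reduce the theorem, via Lemma~\ref{ImageAndKernel}, to two membership and two computation claims. That lemma identifies $\Bild c = \gen(Be) \cap \cogen \kdual(eB)$, and the general recollement formalism additionally says that $c$ and $e$ restrict to quasi-inverse equivalences between $\Bild c$ and $\lmod A$ (since the canonical surjection $\ell M \twoheadrightarrow c(M)$ and injection $c(M) \hookrightarrow rM$ factor through any $X \in \Bild c$ with $eX \cong M$). Consequently it suffices to show, for each of $X = \kdual T_P$ and $X = \kdual(eB)$, that $X$ lies in $\gen(Be) \cap \cogen \kdual(eB)$ and that $eX$ is the desired $A$-module. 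The $A$-module computations are immediate from $eBe = A$: we have $e\kdual(eB) = \kdual(eBe) = \kdual A$, and $e \kdual T_P = \kdual(T_P e) = \kdual P \cong E$ using that $e$ projects $T_P$ onto its summand $P$ and that $P \cong \kdual E$ by the proof of Lemma~\ref{special-tilt}(1).

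Two of the four membership conditions are essentially formal. The containment $\kdual(eB) \in \cogen \kdual(eB)$ is trivial. For $\kdual T_P \in \cogen \kdual(eB)$, I would invoke Remark~\ref{end-specialtilt}(2), which exhibits $\kdual T_P$ as the unique basic $\kdual(eB)$-special cotilting $B$-module: the defining exact sequence from property (C3) embeds $C^1$ into a module in $\add \kdual(eB) \subseteq \cogen \kdual(eB)$, and an argument along the lines of the proof of Proposition~\ref{p:uniqueness} (showing any two $\kdual(eB)$-special cotilting modules have the same additive closure) then forces every indecomposable summand of $\kdual T_P$ to be a summand of $C^0 \oplus C^1$, hence to embed into a sum of copies of $\kdual(eB)$.

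The main obstacle is establishing $\kdual T_P, \kdual(eB) \in \gen(Be)$, and this is where the description of $B$ established in Section~\ref{homot-end-rings} plays a decisive role. There, $B$ is presented as $\End_{\homot A}(Y)^{\op}$ for the two-term complex $Y = (E \xrightarrow{f} Q(E)) \oplus (0 \to \kdual A)$ with $f$ the minimal injective envelope of $E$, and the special idempotent $e$ is projection onto the summand $(0 \to \kdual A)$. Using this presentation, the projective $Be$ is identified with a representable functor on the homotopy category, and chain-level morphisms between the summands of $Y$ produce explicit epimorphisms onto $\kdual T_P$ and $\kdual(eB)$ from finite direct sums of copies of $Be$; concretely, these epimorphisms come from applying $\Hom_\Gamma(T_P, -)$ to the sequence in (T3) for $T_P$ (respecting the fact that its middle term lies in $\add P$) and dualising, paralleling the observation in Remark~\ref{end-specialtilt}. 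Once the four containments are verified, part (1) is proved, and the remaining assertions are immediate: $c(\kdual A) = \kdual(eB)$ is injective as a summand of the injective cogenerator $\kdual B$, while Remark~\ref{end-specialtilt}(2) itself identifies $c(E) = \kdual T_P$ as the canonical $\kdual(eB)$-special cotilting module for $B$.
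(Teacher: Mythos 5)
Your strategy is genuinely different from the paper's: you propose to verify that the candidate modules $\kdual T_P$ and $\kdual(eB)$ lie in $\Bild c=\gen(Be)\cap\cogen(\kdual(eB))$ and restrict correctly under $e$, whereas the paper computes $c(\kdual A)$ and $c(E)$ directly via the explicit functor description and injective resolutions of Lemma~\ref{c-description} and Theorem~\ref{old-proj+inj-res}, and then identifies $c(E)$ with $\kdual T_P$ by showing it is a basic $\kdual(eB)$-special cotilting module and invoking uniqueness (Proposition~\ref{p:uniqueness}). Your reduction is legitimate, the restriction computations are fine, and so are the two $\cogen(\kdual(eB))$ memberships: indeed, applying $\Hom_\Gamma(-,T_P)$ to the sequence in (T3) for $T_P$ and dualising yields a monomorphism $\kdual T_P\hookrightarrow\kdual\Hom_\Gamma(P_0,T_P)$ with target in $\add\kdual(eB)$.

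The gap is exactly at the step you yourself flag as the main obstacle: the memberships $\kdual T_P,\kdual(eB)\in\gen(Be)$. The recipe you give does not produce the required epimorphisms. Applying $\Hom_\Gamma(T_P,-)$ to $0\to\Gamma\to P_0\to T_1\to0$ gives a four-term exact sequence whose middle map $\Hom_\Gamma(T_P,P_0)\to\Hom_\Gamma(T_P,T_1)$ has cokernel $\Ext^1_\Gamma(T_P,\Gamma)$, which is nonzero unless $P$ is a progenerator; moreover dualising it produces \emph{right} $B$-modules and never exhibits $\kdual T_P$ or $\kdual(eB)$ as a quotient of anything in $\add(Be)$. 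The other variance, $\Hom_\Gamma(-,T_P)$ followed by $\kdual$, recovers only the $\cogen$ condition you have already handled. Note that $X\in\gen(Be)$ is equivalent to $(B/BeB)\otimes_BX=0$, and this is not a formal consequence of the tilting axioms, not even for $X=\kdual(eB)$; it is precisely where the hypothesis that $E$ is a cogenerator must enter, and it is what the paper's machinery (the identification of $B$ with $\End_{\mcK^b(A)}(Q^E)^{\op}$ together with the duals of \cite[Lem.~4.2--4.3]{CBSa}, i.e.\ Lemma~\ref{c-description} and Theorem~\ref{old-proj+inj-res}) is built to supply. As written, your argument invokes the homotopy-category presentation only in passing and supplies no working substitute for this step, so the proof is incomplete.
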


For our applications, the most important consequence of Theorem~\ref{int-ext-calcs} is that, in either case, the fully faithful functor $c$ embeds $\add{E}$ into $\lmod{B}$ in such a way that there are no extensions between objects in the image (since $c(E)$ is either cotilting or tilting, and hence a rigid object). To prove this theorem, we will give a different description of the algebra $B$ in each part, as the endomorphism algebra of a bounded complex of $A$-modules in the homotopy category (cf.\ \cite[\S4.3]{PS1}).

\section{Endomorphism rings in the homotopy category}
\label{homot-end-rings}
\subsection{Homotopy categories and derived equivalence}

We begin by repeating some general principals from \cite[\S4.3]{PS1}. Let $A$ be a finite-dimensional algebra, $E\in\lmod{A}$, and $\Gamma=\End_A(E)^{\op}$. The bounded homotopy categories $\homot[\bound]{\proje{\Gamma}}$ and $\homot[\bound]{\injec{\Gamma}}$ of complexes of projective and injective $\Gamma$ modules respectively admit tautological functors to the unbounded derived category $\dercat{\Gamma}$, equivalences onto their images, which we treat as identifications. These subcategories may be characterised intrinsically as the full subcategories of $\dercat{\Gamma}$ on the compact and cocompact objects (in the context of additive categories) respectively. Extending the Yoneda equivalences
\begin{align*}
\Hom_A(E,-)&\colon\add{E}\isoto\proje{\Gamma},\\
\kdual\Hom_A(-,E)&\colon\add{E}\isoto\injec{\Gamma}
\end{align*}
to complexes, one sees that both of these subcategories of $\dercat{\Gamma}$ are equivalent to the subcategory $\thick(E)$ of the homotopy category $\homot[\bound]{A}$ of bounded complexes of arbitrary $A$-modules; by definition, $\thick(E)$ is the smallest full triangulated subcategory of $\homot[\bound]{A}$ closed under direct summands and containing (the stalk complex) $E$.

Now let $F\colon\mcT\isoto\dercat{\Gamma}$ be any equivalence of triangulated categories. It follows from the intrinsic description of $\homot[\bound]{\proje{\Gamma}}$ and $\homot[\bound]{\injec{\Gamma}}$ above that $F$ induces respective equivalences from the subcategories of compact and cocompact objects of $\mcT$ to $\homot[\bound]{\proje{\Gamma}}$ and $\homot[\bound]{\injec{\Gamma}}$ respectively, and thus realises $\thick(E)$ as a full subcategory of $\mcT$ (in two ways). This holds in particular when $\mcT=\dercat{B}$ for some algebra $B$, such as the endomorphism algebra of a tilting or cotilting $\Gamma$-module.

Whenever $B$ is derived equivalent to $\Gamma$, it follows from Rickard's Morita theory for derived categories \cite{RickMT} that the image in $\homot[\bound]{\proje{\Gamma}}$ of the stalk complex $B\in\homot[\bound]{\proje{B}}$ is a tilting complex with endomorphism algebra $B$, inducing the derived equivalence. The preimage of this tilting complex under the Yoneda equivalence is an object of $\thick(E)\subseteq\homot[\bound]{A}$, again with endomorphism algebra $B$. Similarly, the image of $\kdual B\in\homot[\bound]{\injec{B}}$ in $\homot[\bound]{\injec{\Gamma}}$ is a cotilting complex, and its preimage under the dual Yoneda equivalence is another object of $\thick(E)$ with endomorphism algebra $B$. Our conclusion is that when $\Gamma$ is the endomorphism algebra of an $A$-module $E$ (or more generally an object $E\in\homot[\bound]{A}$), any algebra $B$ derived equivalent to $\Gamma$ must also appear as an endomorphism algebra in $\thick(E)\subseteq\homot[\bound]{A}$. In general, $B$ need not be an endomorphism algebra in $\lmod{A}$.

In the context of Theorem~\ref{int-ext-calcs}, we may compute the relevant objects of $\thick(E)$ explicitly. This calculation generalises \cite[Prop.~5.5]{CBSa} for the case that $A$ is representation-finite and $\add{E}=\lmod{A}$, a connection that we will expand on in the next subsection.

\begin{pro}
\label{identifyB}
Let $A$ be a finite-dimensional basic algebra and let $E\in\lmod{A}$ be a basic module. Write $\Gamma=\End_A(E)^{\op}$.
\begin{itemize}
\item[(1)]Assume $E$ is a cogenerator, and write $P=\kdual E$ (see Lemma~\ref{special-tilt}(1)). Then
\[B_P\cong\End_{\homot[\bound]{A}}\Big(E\xrightarrow{\left(\begin{smallmatrix}f\\0\end{smallmatrix}\right)} Q(E)\oplus\kdual{A}\Big)^{\op},\]
where $f\colon E\to Q(E)$ is a minimal injective envelope. The special idempotent $e\in B_P$ given by projection onto $P$ corresponds under this isomorphism to projection onto the summand $0\to\kdual A$.
\item[(2)]Assume $E$ is a generator, and write $Q=\kdual E$ (see Lemma~\ref{special-tilt}(2)). Then
\[B^Q\cong\End_{\homot[\bound]{A}}\Big(P(E)\oplus A\xrightarrow{(\begin{smallmatrix}g&0\end{smallmatrix})}E\Big)^{\op},\]
where $g\colon P(E)\to E$ is a minimal projective cover. The special idempotent $e\in B^Q$ given by projection onto $Q$ corresponds under this isomorphism to projection onto the summand $A\to0$.
\end{itemize}
\end{pro}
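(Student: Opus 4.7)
The plan is to translate the problem to the bounded homotopy category of projective $\Gamma$-modules using the covariant Yoneda equivalence
\[
F = \Hom_A(E, -) \colon \add{E} \isoto \proje{\Gamma},
\]
which extends termwise to an equivalence $\mcK^b(\add{E}) \isoto \mcK^b(\proje{\Gamma})$. Since $E$ is a cogenerator, both $Q(E)$ and $\kdual{A}$ lie in $\add{E}$, so the complex $X = (E \xrightarrow{\binom{f}{0}} Q(E) \oplus \kdual{A})$ lives in $\mcK^b(\add{E})$, a full subcategory of $\mcK^b(A)$ with the same morphism spaces. It therefore suffices to compute the endomorphism algebra of $F(X)$ in $\mcK^b(\proje{\Gamma})$, which agrees with that in $\dercat[\bound]{\Gamma}$.

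Explicitly, $F(X)$ is the complex $\Gamma \xrightarrow{\binom{g}{0}} P_0 \oplus P$, where $g = \Hom_A(E, f)$, $P_0 = \Hom_A(E, Q(E))$, and $P = \Hom_A(E, \kdual{A})$. The short exact sequence $0 \to \Gamma \xrightarrow{g} P_0 \to T_1 \to 0$ from the proof of Lemma~\ref{special-tilt}, combined with the splitting $F(X) = (\Gamma \to P_0) \oplus (0 \to P)$, identifies $F(X)$ in $\dercat[\bound]{\Gamma}$ with the module $T_1 \oplus P$ concentrated in degree~$0$. I next identify this module with $T_P$ as follows. Since $E$ is a cogenerator, every indecomposable injective is a summand of $E$ and hence of $Q(E)$, giving $\add{Q(E)} = \injec{A} = \add(\kdual{A})$ and thus $\add{P_0} = \add{P}$; in particular the basic form of $P_0$ is $P$. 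Moreover, $T_1$ has no projective summand: such a summand would correspond under inverse Yoneda to an injective summand of $Q(E)$ not essentially met by $E$, contradicting the minimality of $f$. Since $T_P$ is by definition the basic form of $P_0 \oplus T_1$, these observations combine to yield $T_P \cong T_1 \oplus P$, and hence
\[
\End_{\mcK^b(A)}(X) \cong \End_{\dercat[\bound]{\Gamma}}(T_P) = \End_\Gamma(T_P) = B_P^{\op}.
\]

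The special idempotent is identified by tracking the splitting $X = (E \to Q(E)) \oplus (0 \to \kdual{A})$ through the equivalence: it becomes $F(X) = (\Gamma \to P_0) \oplus (0 \to P)$ in $\mcK^b(\proje{\Gamma})$, and then the decomposition $T_P \cong T_1 \oplus P$ in $\dercat[\bound]{\Gamma}$. The idempotent projecting onto the summand $0 \to \kdual{A}$ in $\End_{\mcK^b(A)}(X)$ therefore corresponds to the idempotent projecting onto $P \subseteq T_P$, which is the special idempotent of $B_P$ by definition. Statement~(2) is proved by the dual argument, applying the contravariant Yoneda equivalence $\kdual\Hom_A(-, E) \colon \add{E} \isoto \injec{\Gamma}$ to the complex $(P(E) \oplus A \to E)$ and using the minimal projective cover in place of the injective envelope.

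The principal obstacle I expect is pinning down the isomorphism $T_P \cong T_1 \oplus P$ at the level of $\Gamma$-modules, not merely as additive closures: beyond the disjointness-of-summands argument above, this requires $T_1$ itself to be basic, a fact that should follow from the left-minimality of $g \colon \Gamma \to P_0$ inherited from the minimal injective envelope $f$.
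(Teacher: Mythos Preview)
Your proof is correct and follows essentially the same route as the paper's: both use the Yoneda equivalence $\Hom_A(E,-)\colon\add E\isoto\proje\Gamma$ to identify the complex $X$ with a projective resolution of $T_1\oplus P$, and both rely on the identification $T_P\cong T_1\oplus P$. The only difference is the direction of presentation---you go from $X$ to $T_P$, while the paper starts from $T_P$, writes down its projective resolution, and pulls back through Yoneda.

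You are in fact more careful than the paper on one point. The paper simply asserts $T_P=T_1\oplus\kdual E$ by citing the proof of Lemma~\ref{special-tilt}, but that lemma only shows $P_0\oplus T_1$ is $P$-special tilting, not that $T_1\oplus P$ is basic. Your final paragraph correctly isolates this as the crux, and your instinct that it follows from minimality is right: since $E_i$ is non-injective and indecomposable, the map $f_i\colon E_i\to Q(E_i)$ is radical in $\add E$, so $g_i=F(f_i)$ is radical in $\proje\Gamma$, making $0\to P_\Gamma(E_i)\to P_\Gamma(Q(E_i))\to T_1^{(i)}\to 0$ a \emph{minimal} projective resolution. Indecomposability of $P_\Gamma(E_i)$ then forces $T_1^{(i)}$ to be indecomposable (any projective summand would split off, contradicting your earlier argument), and uniqueness of minimal resolutions shows $T_1^{(i)}\not\cong T_1^{(j)}$ for $i\neq j$. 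This completes the step you flagged.
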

\begin{proof}
As usual, we prove only (1), since (2) is dual. By definition $B_P$ is the endomorphism algebra of the unique basic $P$-special tilting $\Gamma$-module $T_P$, so that the image of $B_P$ in $\homot[\bound]{\proj{\Gamma}}$ is given by a projective resolution of $T_P$. By the proof of Lemma~\ref{special-tilt}, we have $T_P=T_1\oplus\kdual E$ and an exact sequence
\[\begin{tikzcd}[column sep=20pt]
0\arrow{r}&\Gamma\arrow{r}&P_0\arrow{r}&T_1\arrow{r}&0
\end{tikzcd}\]
in which the map $\Gamma\to P_0$ is the image under $\Hom_A(E,-)$ of a minimal injective envelope $f\colon E\to Q(E)$. Thus a projective resolution of $T_P$ is given by the direct sum of the map $\Gamma\to P_0$ above with the zero map $0\to\kdual E$, treated as a $2$-term complex. Taking the preimage of this complex under the Yoneda equivalence $\Hom_A(E,-)$ yields
\[(E\xrightarrow{\left(\begin{smallmatrix}f\\0\end{smallmatrix}\right)} Q(E)\oplus\kdual{A})\in\mcK^b(A),\]
and the required isomorphism follows. Since $0\to\kdual E$, corresponding under Yoneda to $0\to\kdual A$, is the part of the projective resolution of $T_P$ contributed by the summand $P$, we have the claimed relationship between idempotents.
\end{proof}

\begin{rem}
We did not specify degrees in the complexes on the right-hand side of the isomorphisms of Proposition~\ref{identifyB}, since such a choice plays no role in the statement. When such concreteness is required, we take the term $E$ to be in degree $0$ in each case.
The assumptions on basicness of $A$ and $E$ and minimality of the relevant projective cover and injective envelope are necessary since $B^I$ and $B_P$ are basic algebras by construction. However, one can remove all these assumptions from the statement at the cost of replacing the isomorphisms by Morita equivalences.
\end{rem}

\subsection{The category of injective envelopes}
\label{injenvelopes}

In the case that $A$ is representation-finite and $E$ is an additive generator of $\lmod{A}$, a useful description of the recollement arising from the special idempotent of the generator-cotilted algebra of $E$ is given in \cite[\S3]{CBSa} via the category $\mcH$ of projective quotients. We will briefly recall this construction, and generalise some of the results to our setting, in Section~\ref{projquots} below. However, since in the geometric applications to follow we have opted to use cogenerator-tilted algebras instead, we give more details for this case, in which we use instead the dual notion of a category $\check{\mcH}$ of injective envelopes.

We define $\check{\mcH}$ via the following construction, dual to that in \cite[\S3]{CBSa}. Let $\mcQ$ be the category with objects the monomorphisms $X\to Q$ of $A$-modules for which $Q$ is injective, and morphisms given by commuting squares. Then $\check{\mcH}$ is obtained from $\mcQ$ as the quotient by those morphisms factoring through an object of the form $\id_Q\colon Q\to Q$ for $Q$ injective.

We may view this category in a different way; first we identify $\mcQ$ with a full subcategory of the category $\mcC^b(A)$ of bounded chain complexes of $A$-modules, by interpreting the objects $X\to Q$ of $\mcQ$ as complexes with $X$ in degree $0$. It is then straightforward to check that a map between such complexes factors through a complex of the form $\id_Q\colon Q\to Q$ if and only if it is null-homotopic, so that $\check{\mcH}$ is identified with the full subcategory of $\mcK^b(A)$ on the same objects as $\mcQ$.

Now consider case (1) from Proposition~\ref{identifyB}, so that $E\in\lmod{A}$ is a cogenerator, and write
\[Q^E=\Big(E\xrightarrow{\left(\begin{smallmatrix}f\\0\end{smallmatrix}\right)} Q(E)\oplus\kdual{A}\Big),\]
so that the cogenerator-tilted algebra $B$ of $E$ satisfies
\[B:=B_P\cong\End_{\mcK^b(A)}(Q^E)^{\op}\]
for $P=\kdual E$. Since $f$ is an injective envelope, $Q^E\in\check{\mcH}$ (under our convention that the term $E$ is in degree $0$). We write $\check{\mcH}_E=\add{Q^E}$ for the additive closure of $Q^E$ in $\check{\mcH}$, or equivalently in $\mcK^b(A)$. By Proposition~\ref{identifyB}, a $B$-module is the same as an $\check{\mcH}_E$-module, and the functor $e\colon\lmod{B}\to\lmod{A}$ corresponds to restricting functors on $\check{\mcH}_E$ to the full subcategory $\check{\mcH}_0$ on objects of the form $0\to Q$.

We now have a collection of restriction functors
\begin{align*}
e&\colon\lmod{\check{\mcH}_E}\to\lmod{\check{\mcH}_0},&
e_E&\colon\lmod{\check{\mcH}}\to\lmod{\check{\mcH}_E},&
\widehat{e}&\colon\lmod{\check{\mcH}}\to\lmod{\check{\mcH}_0}
\end{align*}
with $\widehat{e}=e_Ee$. Taking left and right adjoints, we obtain a diagram
\[\begin{tikzcd}[column sep=60pt,row sep=60pt]
\lmod{\check{\mcH}}\arrow["\widehat{e}" description]{r}\arrow["e_E" description]{d}&\lmod{\check{\mcH}_0}\simeq\lmod{A}\arrow[equal, shift right=8]{d}\arrow[shift right=3,swap]{l}{\widehat{\ell}}\arrow[shift left=3]{l}{\widehat{r}}
\\
\lmod{\check{\mcH}_E}\arrow["e" description]{r}\arrow[shift left=3]{u}{\ell_E}\arrow[shift right=3,swap]{u}{r_E}&\lmod{\check{\mcH}_0}\simeq\lmod{A}\arrow[shift right=3,swap]{l}{\ell}\arrow[shift left=3]{l}{r}
\end{tikzcd}\]
and intermediate extension functors
\begin{align*}
c&=\Bild(\ell\to r),&c_E&=\Bild(\ell_E\to r_E),&\widehat{c}&=\Bild(\widehat{\ell}\to\widehat{r}).
\end{align*}
We are now able to give an explicit description and several properties of the functor $c$, by exploiting similar calculations for $\widehat{c}$ in \cite{CBSa}.

\begin{lem}
\label{c-description}
We have $c=e_E\widehat{c}$. Moreover,
\[c(M)(X\to Q)=\ker(\kdual\Hom_A(M,Q)\to\kdual\Hom_A(M,X))\]
for $(X\to Q)\in\check{\mcH}_E$.
\end{lem}
\begin{proof}
Since $\widehat{e}=e_Ee$, it follows by uniqueness of adjoints that $\widehat{\ell}=\ell_E\ell$ and $\widehat{r}=r_Er$. Post-composing with $e_E$, we see that $e_E\widehat{\ell}=\ell$ and $r_E\widehat{r}=r$.
By definition of $\widehat{c}$, there is an epimorphism $\pi\colon\widehat{\ell}\to\widehat{c}$ and a monomorphism $\iota\colon\widehat{c}\to\widehat{r}$, with $\iota\pi$ equal to the canonical map $\widehat{\ell}\to\widehat{r}$. Precomposing all functors with the exact functor $e_E$, we obtain the canonical map $\ell\to r$. The induced map $\ell\to e_E\widehat{c}$ is again an epimorphism, and $e_E\widehat{c}\to r$ is again a monomorphism, and so $c=e_E\widehat{c}$, as claimed. Dualising \cite[Lem.~4.2]{CBSa} (cf.\ \cite[Thm.~4.12]{PS1}) gives
\[\widehat{c}(M)(X\to Q)=\ker(\kdual\Hom_A(M,Q)\to\kdual\Hom_A(M,X))\]
for all $(X\to Q)\in\mcH$. Since $c(M)=e_E\widehat{c}(M)$ is obtained by restriction of functors, it has the same formula when evaluated on $(X\to Q)\in\mcH_E$.
\end{proof}

\begin{thm} \label{old-proj+inj-res}
Let $E\in\lmod{A}$ be a cogenerator, and let $c\colon\lmod{A}\to\lmod{\check{\mcH}_E}$ be the intermediate extension functor. If $M\in\add{E}$ and $M\to Q(M)$ is an injective envelope, then $c(M)$ has an injective resolution
\[\begin{tikzcd}[column sep=20pt]
0\arrow{r}&c(M)\arrow{r}&\kdual\Hom_{\check{\mcH}_E}(0\to Q(M),-)\arrow{r}&\kdual\Hom_{\check{\mcH}_E}(M\to Q(M),-)\arrow{r}&0.
\end{tikzcd}\]
In particular, $\idim_{\lmod{\check{\mcH}_E}}c(M)\leq 1$. Furthermore, $\Ext^1_{\lmod{\check{\mcH}_E}}(c(N),c(M))=0$ for any $N\in\lmod{A}$.
\end{thm}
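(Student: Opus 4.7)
The plan is to construct the injective resolution from a distinguished triangle in $\mcK^b(A)$, and then deduce the Ext-vanishing by a Yoneda-style duality. Consider the termwise-split short exact sequence of complexes
\[0 \to (0 \to Q(M)) \to (M \to Q(M)) \to M_\bullet \to 0,\]
where $M_\bullet$ denotes the stalk complex concentrated in degree $0$; rotating the resulting triangle in $\mcK^b(A)$ gives
\[M_\bullet[-1] \to (0 \to Q(M)) \to (M \to Q(M)) \to M_\bullet.\]
For each $Y = (X \to Q) \in \check{\mcH}_E$, applying $\Hom_{\mcK^b(A)}(-, Y)$ produces a long exact sequence. The crucial input is that $\Hom_{\mcK^b(A)}(M_\bullet, Y) = 0$: a chain map corresponds to a morphism $g \colon M \to X$ satisfying $d_Y g = 0$, and $d_Y$ is a monomorphism because $Y \in \check{\mcH}$.

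A direct calculation in the homotopy category identifies $\Hom_{\mcK^b(A)}((0 \to Q(M)), Y) = \Hom_A(Q(M), Q)$ and $\Hom_{\mcK^b(A)}(M_\bullet[-1], Y) = \Hom_A(M, Q)/d_Y\Hom_A(M, X)$, with the connecting map given by precomposition with the envelope $\iota_M \colon M \to Q(M)$. Because $Q$ is injective, every $g \colon M \to Q$ extends along $\iota_M$, so this map is surjective. The long exact sequence therefore truncates to a short exact sequence
\[0 \to \Hom_{\mcK^b(A)}((M \to Q(M)), Y) \to \Hom_A(Q(M), Q) \to \Hom_A(M,Q)/d_Y\Hom_A(M, X) \to 0,\]
natural in $Y$. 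Dualising by $\kdual$ (exact over a field) and using Lemma~\ref{c-description} to identify $\kdual(\Hom_A(M,Q)/d_Y\Hom_A(M,X)) = c(M)(Y)$ yields the claimed resolution. The two right-hand terms are injective $B$-modules because the $\K$-dual of any representable functor on $\check{\mcH}_E$ is injective, via the natural isomorphism $\Hom_B(F, \kdual\Hom_{\check{\mcH}_E}(C, -)) \cong \kdual F(C)$, valid for every $B$-module $F$ and every $C \in \check{\mcH}_E$.

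For the final statement, apply $\Hom_B(c(N), -)$ to the injective resolution; combining the induced long exact sequence with the Yoneda identity above produces
\[\kdual c(N)(0 \to Q(M)) \to \kdual c(N)(M \to Q(M)) \to \Ext^1_B(c(N), c(M)) \to 0,\]
so the vanishing of $\Ext^1$ reduces to injectivity of $c(N)(M \to Q(M)) \to c(N)(0 \to Q(M))$. Using Lemma~\ref{c-description} to unpack both sides, this map is precisely the tautological inclusion
\[\ker\bigl(\kdual\Hom_A(N, Q(M)) \to \kdual\Hom_A(N, M)\bigr) \hookrightarrow \kdual\Hom_A(N, Q(M)),\]
which is injective. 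The main obstacle will be carefully tracking the Hom spaces in $\mcK^b(A)$ and verifying that the morphism $(0 \to Q(M)) \to (M \to Q(M))$ in $\check{\mcH}_E$ induces exactly the inclusion described above; once that is in place, the rest is routine homological algebra.
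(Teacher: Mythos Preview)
Your triangle approach is sound in outline and, once the resolution is in hand, your Ext-vanishing argument is identical to the paper's. The paper itself does not construct the resolution directly: it quotes the dual of \cite[Lem.~4.3]{CBSa} to obtain the analogous resolution of $\widehat{c}(M)$ in $\lmod{\check{\mcH}}$, and then applies the exact restriction functor $e_E$, using $c=e_E\widehat{c}$ (Lemma~\ref{c-description}). Your direct argument in $\mcK^b(A)$ is effectively what that citation would unpack to, carried out in $\check{\mcH}_E$ rather than in the larger $\check{\mcH}$; both routes lead to the same place.

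There is, however, a concrete error in your computation. You claim
\[\Hom_{\mcK^b(A)}\bigl((0\to Q(M)),\,Y\bigr)=\Hom_A(Q(M),Q),\]
but this ignores homotopies: a chain map $h\colon Q(M)\to Q$ (in degree~$1$) is null-homotopic precisely when $h=d_Y s$ for some $s\colon Q(M)\to X$, so the correct identification is
\[\Hom_{\mcK^b(A)}\bigl((0\to Q(M)),\,Y\bigr)=\Hom_A(Q(M),Q)\big/d_Y\Hom_A(Q(M),X),\]
just as for $M_\bullet[-1]$. With your incorrect middle term, the displayed short exact sequence does not dualise to the resolution in the theorem, since $\kdual\Hom_A(Q(M),Q)$ is not $\kdual\Hom_{\check{\mcH}_E}(0\to Q(M),Y)$. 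Fortunately the fix is local: your surjectivity argument (extend $g\colon M\to Q$ along $\iota_M$ using injectivity of $Q$) still proves surjectivity of the induced map on quotients, so with the corrected middle term the long exact sequence truncates as you want and dualises to exactly the statement. After this repair, everything else in your proposal goes through unchanged.
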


\begin{proof} 
By dualising \cite[Lem.~4.3]{CBSa}, we see that the the sequence
\[\begin{tikzcd}[column sep=20pt]
0\arrow{r}&\widehat{c}(M)\arrow{r}&\kdual\Hom_{\check{\mcH}}(0\to Q(M),-)\arrow{r}&\kdual\Hom_{\check{\mcH}}(M\to Q(M),-)\arrow{r}&0.
\end{tikzcd}\]
is an injective resolution of $\widehat{c}(M)$. When $M\in\add{E}$, applying the exact functor $e_E$ gives the desired injective resolution of $c(M)$, since $e_E\widehat{c}=c$, and by definition
\[ 
e_E(\kdual\Hom_{\check{\mcH}}(X\to Q,-)) = \kdual\Hom_{\check{\mcH}_{E}} (X\to Q,-) 
 \]
 whenever $(X\to Q)\in\check{\mcH}_E$. Applying $\Hom_{\lmod{\check{\mcH}_E}}(c(N),-)$ to this injective resolution, and using that $c$ is fully faithful, we obtain an exact sequence
\[\begin{tikzcd}[column sep=10pt]
0\arrow{r}&\Hom_A(N,M)\arrow{r}&\kdual c(N)(0\to Q(M))\arrow{r}&\kdual c(N)(M\to Q(M))\arrow{r}&\Ext^1_{\lmod{\mcH_E}}(c(N),c(M))\arrow{r}&0
\end{tikzcd}\]
Using the calculation of $c(N)$ in Lemma~\ref{c-description}, we see that the middle map is surjective, being dual to the tautological injection
\[ 
\ker\big(\kdual\Hom_A(N,Q(M))\to\kdual\Hom_A(N,M)\big)\to\kdual\Hom_A(N,Q(M)),\]
and the statement follows.
\end{proof}

By Proposition~\ref{identifyB}, there is an equivalence of categories $\lmod{\check{\mcH}_E}\to\lmod{B}$, given by evaluation on the additive generator $Q^E$ of $\check{\mcH}_E$, which we will often treat as an identification. 
Using this, we may now prove Theorem~\ref{int-ext-calcs}(1).

\begin{proof}[Proof of Theorem~\ref{int-ext-calcs}(1)]
We first show that $c(\kdual A)\cong\kdual(eB)$ for $e$ the special idempotent. The identity map $\kdual A\to\kdual A$ is an injective envelope, and is isomorphic to the zero object in $\check{\mcH}_E$. Thus, after evaluating on $Q^E$ to identify $\lmod{\check{\mcH}_E}$ with $\lmod{B}$, the injective resolution of $c(\kdual A)$ from Theorem~\ref{old-proj+inj-res} provides an isomorphism
\[c(\kdual A)\cong\kdual\Hom_{\check{\mcH}_E}(0\to\kdual A,Q^E)\cong\kdual(eB)\]
by Proposition~\ref{identifyB}.

Recall that $B=\End_\Gamma(T_P)^{\op}$, where $\Gamma=\End_A(E)^{\op}$ and $P$ is the projective $\Gamma$-module $\kdual E$. By Remark~\ref{end-specialtilt}, the canonical cotilting $B$-module $\kdual T_P$ is the unique basic $\kdual(eB)$-special cotilting module. Thus to show that $c(E)=\kdual T_P$, it is enough to show that it is such a module.

Since $E$ is basic, so is $c(E)$. Moreover, $c(E)$ satisfies (C1) and (C2) by Theorem~\ref{old-proj+inj-res}. Interpreting the terms as $B$-modules by evaluation on $Q^E$, and using Proposition~\ref{identifyB}, the injective resolution of $c(E)$ from Theorem~\ref{old-proj+inj-res} becomes
\[\begin{tikzcd}[column sep=20pt]
0\arrow{r}&c(E)\arrow{r}&\kdual\Hom_{\check{\mcH}_E}(0\to Q(E),Q^E)\arrow{r}&\kdual((1-e)B)\arrow{r}&0
\end{tikzcd}\]
Since $(0\to Q(E))\in\add(0\to\kdual A)$, the middle term lies in $\add{\kdual(eB)}$ by Proposition~\ref{identifyB}. Adding the identity map $\kdual(eB)\to\kdual(eB)$ to the right-hand end of the sequence yields
\[\begin{tikzcd}[column sep=20pt]
0\arrow{r}&c(E)\arrow{r}&\widetilde{Q}\arrow{r}&\kdual B\arrow{r}&0
\end{tikzcd}\]
with $\widetilde{Q}\in\add{\kdual(eB)}$. This sequence shows that $c(E)$ satisfies (C3). Since $E$ is a cogenerator, $\kdual(eB)=c(\kdual A)\in\add{c(E)}$, which together with the previous sequence shows that $c(E)$ is $\kdual(eB)$-special, completing the proof.
\end{proof}

\subsection{The category of projective quotients}
\label{projquots}

All of the results of the previous section have dual analogues, leading to a proof of Theorem~\ref{int-ext-calcs}(2). We merely state the dual results, which correspond more closely to those of \cite{CBSa}. Let $\mcH$ be the category with objects given by surjective maps $P\to X$ of $A$-modules for which $P$ is projective, and morphisms by commuting squares modulo maps factoring through an object of the form $\id_P\colon P\to P$. Just as for $\check{\mcH}$, we may view $\mcH$ as a full subcategory of $\mcK^b(A)$ on the maps $P\to X$, thought of as $2$-term complexes with $X$ in degree $0$.

Assume $E\in\lmod{A}$ is a generator, and write $\Gamma=\End_A(E)^{\op}$. Writing $B$ for the generator-cotilted algebra of $E$, and
\[\mcH_E=\add\Big(P(E)\oplus A\xrightarrow{(\begin{smallmatrix}g&0\end{smallmatrix})}E\Big),\]
where $g\colon P(E)\to E$ is a minimal projective cover, Proposition~\ref{identifyB} shows that $\lmod{\mcH_E}$ and $\lmod{B}$ are equivalent categories via evaluating functors in $\lmod{\mcH_E}$ on the above additive generator. This identifies the restriction functor $e\colon\lmod{B}\to\lmod{A}$, induced from the special idempotent, with the restriction of functors in $\lmod{\mcH_E}$ to $\add(A\to0)$.

\begin{lem}[Dual to Lemma~\ref{c-description}] \label{c-description*}
The intermediate extension functor $c\colon\lmod{A}\to\lmod{\mcH_E}$ is given by
\[c(M)(P\to X)=\coKer(\Hom_A(X,M)\to\Hom_A(P,M))\]
for $(P\to X)\in\mcH_E$.
\end{lem}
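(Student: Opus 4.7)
The strategy is to dualise, step for step, the proof of Lemma~\ref{c-description}. The only real difference is that what we called "dualising \cite[Lem.~4.2]{CBSa}" in the injective-envelope setting is now applied in its original (non-dualised) form.

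First I would set up the analogue, for $\mcH$ in place of $\check{\mcH}$, of the six-functor diagram from Section~\ref{injenvelopes}. Let $\mcH_0\subseteq\mcH$ denote the full subcategory on objects of the form $A\to 0$, so that $\lmod{\mcH_0}\simeq\lmod{A}$, and let $\mcH_E$ be as defined above. The inclusions give restriction functors
\[e\colon\lmod{\mcH_E}\to\lmod{\mcH_0},\qquad e_E\colon\lmod{\mcH}\to\lmod{\mcH_E},\qquad\widehat{e}\colon\lmod{\mcH}\to\lmod{\mcH_0}\]
with $\widehat{e}=e\circ e_E$, and each has a left adjoint ($\ell$, $\ell_E$, $\widehat{\ell}$) and a right adjoint ($r$, $r_E$, $\widehat{r}$), giving intermediate extensions $c$, $c_E$, $\widehat{c}$ exactly as in Section~\ref{injenvelopes}.

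Next I would verify, by uniqueness of adjoints, that $\widehat{\ell}=\ell_E\ell$ and $\widehat{r}=r_E r$. Since $e_E$ is exact and fully faithful as a restriction along a fully faithful inclusion (so that $e_E\ell_E=\id=e_E r_E$), post-composing these identities with $e_E$ yields $\ell=e_E\widehat{\ell}$ and $r=e_E\widehat{r}$. The canonical morphism $\widehat{\ell}\to\widehat{r}$ factors as $\widehat{\ell}\twoheadrightarrow\widehat{c}\hookrightarrow\widehat{r}$, and applying the exact functor $e_E$ preserves both epi and mono, so the induced epi–mono factorisation of $\ell\to r$ identifies $c$ with $e_E\widehat{c}$. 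Hence for every $(P\to X)\in\mcH_E$ one has $c(M)(P\to X)=\widehat{c}(M)(P\to X)$.

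Finally, I would apply \cite[Lem.~4.2]{CBSa} directly (this is the original statement, of which Lemma~\ref{c-description} used the dual): for any object $(P\to X)\in\mcH$ it gives
\[\widehat{c}(M)(P\to X)=\coKer\bigl(\Hom_A(X,M)\to\Hom_A(P,M)\bigr).\]
Combined with $c=e_E\widehat{c}$ on $\mcH_E$, this yields the claimed formula.

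The only step that requires any thought is the identification $c=e_E\widehat{c}$; everything else is a bookkeeping translation of the injective-envelope proof. I do not anticipate a genuine obstacle, as the argument is formally dual and the cited computation of $\widehat{c}$ on $\mcH$ is already available in \cite{CBSa}.
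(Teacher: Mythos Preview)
Your proposal is correct and is precisely the dualisation the paper intends; the paper gives no separate proof, merely labelling the lemma ``Dual to Lemma~\ref{c-description}'', and your argument mirrors that proof line for line, invoking \cite[Lem.~4.2]{CBSa} in its original (undualised) form. One minor slip: the restriction functor $e_E$ is not itself fully faithful---rather its adjoints $\ell_E,r_E$ are, which is what yields $e_E\ell_E\cong\id\cong e_Er_E$---but since this is exactly the identity you then use, the argument is unaffected.
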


\begin{thm}[Dual to Theorem~\ref{old-proj+inj-res}]
\label{dual-projres}
Let $E\in\lmod{A}$ be a generator, and let $c\colon\lmod{A}\to\lmod{\mcH_E}$ be the intermediate extension functor.
If $M\in\add{E}$ and $p\colon P(M)\to M$ is a projective cover, then $c(M)$ has a projective resolution 
\[\begin{tikzcd}[column sep=20pt]
0\arrow{r}&\Hom_{\mcH_E}(-,P(M)\xrightarrow{p}{M})\arrow{r}&\Hom_{\mcH_E}(-,P(M)\to 0)\arrow{r}&c(M)\arrow{r}&0.
\end{tikzcd}\]
In particular, $\pdim_{\lmod{\check{\mcH}_E}} c(M)\leq 1$.
Furthermore, $\Ext^1_{\lmod{\mcH_E}}(c(M), c(N))=0$ for any $N\in\lmod{A}$.
\end{thm}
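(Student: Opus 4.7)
The plan is to mirror the proof of Theorem~\ref{old-proj+inj-res} in a dual fashion. First I would set up the full subcategories $\mcH_0 := \add(A \to 0) \subseteq \mcH_E \subseteq \mcH$, paralleling the chain $\check{\mcH}_0 \subseteq \check{\mcH}_E \subseteq \check{\mcH}$ of Section~\ref{injenvelopes}, with restriction functors $e_E\colon \lmod{\mcH} \to \lmod{\mcH_E}$ and $e\colon \lmod{\mcH_E} \to \lmod{\mcH_0}$ composing to $\widehat{e}\colon \lmod{\mcH} \to \lmod{\mcH_0}$. By uniqueness of adjoints $\widehat{\ell} = \ell_E \ell$ and $\widehat{r} = r_E r$; since $e_E$ is exact and $e_E \ell_E = \id = e_E r_E$, passing to images yields $c = e_E \widehat{c}$.

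Next I would invoke \cite[Lem.~4.3]{CBSa} directly to obtain a projective resolution
\[ 0 \to \Hom_{\mcH}(-, P(M) \xrightarrow{p} M) \to \Hom_{\mcH}(-, P(M) \to 0) \to \widehat{c}(M) \to 0 \]
of $\widehat{c}(M)$ in $\lmod{\mcH}$. Both representing complexes lie in $\mcH_E$: since $M \in \add{E}$, the projective cover $P(M)$ is a summand of $P(E)^k$ for some $k$, so $(P(M) \to M)$ is a summand of $(P(E) \to E)^k$; and $(P(M) \to 0)$ is a summand of $(A \to 0)^\ell$ for appropriate $\ell$. Applying the exact restriction $e_E$, and using the tautological identification $e_E \Hom_{\mcH}(-, X) = \Hom_{\mcH_E}(-, X)$ for $X \in \mcH_E$ together with $c = e_E \widehat{c}$, then produces the required projective resolution of $c(M)$, whence $\pdim c(M) \leq 1$.

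For the Ext-vanishing I would apply $\Hom_{\lmod{\mcH_E}}(-, c(N))$ to this resolution. By Yoneda the two representable terms evaluate to $c(N)(P(M) \to 0)$ and $c(N)(P(M) \xrightarrow{p} M)$, which Lemma~\ref{c-description*} identifies with $\Hom_A(P(M), N)$ and $\coKer(p^*\colon \Hom_A(M, N) \to \Hom_A(P(M), N))$ respectively. The connecting map, induced by the chain map $(P(M) \to M) \to (P(M) \to 0)$ that is the identity on $P(M)$ and zero on $M$, unwinds to the canonical surjection $\Hom_A(P(M), N) \twoheadrightarrow \coKer(p^*)$, forcing $\Ext^1_{\lmod{\mcH_E}}(c(M), c(N)) = 0$. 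The only real obstacle is careful bookkeeping of arrow directions through the interplay of Yoneda and the contravariant functor $c(N)$; the algebraic substance comes entirely from \cite[Lem.~4.3]{CBSa} and Lemma~\ref{c-description*}, as the argument is already essentially executed in its dual form in the proof of Theorem~\ref{old-proj+inj-res}.
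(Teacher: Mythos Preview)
Your proposal is correct and follows exactly the approach the paper intends: the paper itself gives no separate proof of Theorem~\ref{dual-projres}, merely labelling it ``Dual to Theorem~\ref{old-proj+inj-res}'', and your argument is precisely the dualisation of that proof, invoking \cite[Lem.~4.3]{CBSa} directly (rather than its dual) and using Lemma~\ref{c-description*} in place of Lemma~\ref{c-description}. Your bookkeeping of the variance and the identification of the middle map with the canonical surjection onto the cokernel are both correct.
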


Theorem~\ref{int-ext-calcs}(2) then follows from Proposition~\ref{identifyB} and Theorem~\ref{dual-projres} via a dual argument to that given above for part (1).

\section{Rank varieties and orbit closures}
\label{orbit-closures}

\subsection{Rank varieties as affine quotient varieties}
\label{rankvars}

We now turn to the geometric part of the paper. In this section, we assume that $\K$ is an algebraically closed field of characteristic zero, so that we may freely use results from \cite[\S6]{CBSa}, and let $A=\K Q/I$ be a finite-dimensional algebra presented by a finite quiver $Q$ and an admissible ideal $I$. For finite-dimensional $A$-modules $X$ and $Y$, write
\[\left[ X,Y\right] =\dim\Hom_A(X,Y).\]
Write $Q_{0}=\{1, \ldots , n\} $ and $e_i$ for the primitive idempotent corresponding to $i\in Q_0$. We write $\Dim{M}= (\dim e_iM)_{1 \leq i\leq n}\in\Z^{n}$ for the dimension vector of an $A$-module $M$. Given $v,w\in\Z^{n}$, we write $v\leq w$ if this inequality holds componentwise.

For $d\in\Z^n_{\geq0}$, we denote by
\[\rep{Q}{d}= \prod_{(i\to j)\in Q_1 } \Hom_{\K }(\K^{d_i}, \K^{d_j} )\]
the representation space of $Q$, each point of which defines a $\K Q$-module with dimension vector $d$ in the usual way. The representation space of $A$ is then the closed subvariety 
\[\rep{A}{d} = \{ M \in \rep{Q}{d} \mid IM=0\}\]
of collections of maps satisfying the relations in $I$. This space carries a natural action of the algebraic group $\Gl_d:=\prod_{i=1}^n\Gl_{d_i}$, with orbits corresponding to isomorphism classes of $d$-dimensional $A$-modules.

\begin{dfn} Let $E=\bigoplus_{i=1}^t E_i$, where each $E_i\in\lmod{A}$ is indecomposable, and let $m=(m_1,\ldots , m_t) \in \Z_{\geq 0}^t$. We define 
\[ 
\begin{aligned}
\drankvar{m}{E}&:= \{ N\in \rep{A}{d} \mid \left[ N, E_i\right] \geq m_i, \; 1\leq i\leq t \},\\
\rankvar{m}{E}&:= \{ N\in \rep{A}{d} \mid \left[E_i, N\right] \geq m_i, \; 1\leq i\leq t \}.
\end{aligned}
\]
Since the maps $\rep{A}{d}\to \Z_{\geq 0}$ defined by $N\mapsto \left[N, X\right ] $ and $N\mapsto \left[X,N\right ] $ are upper-semicontinuous for every module $X$, the subsets $\drankvar{m}{E}$ and $\rankvar{m}{E}$ are Zariski-closed in $\rep{A}{d}$. We will refer to them as \emph{rank varieties}. For any fixed module $M\in\rep{A}{d}$, we write $\drankvar{M}{E}:= \drankvar{m}{E}$ where $m_i =\left[ M, E_i \right]$ and $\rankvar{M}{E}:=\rankvar{m}{E}$ where $m_i:=\left[E_i,M\right]$. For such an $M$, we also study  
\[ 
\begin{aligned}
\drankvar{M}{} &= \{ N\in \rep{A}{d} \mid \left[ N, U\right] \geq \left[ M, U\right] \text{ for all }U\in\lmod{A} \},\\
\rankvar{M}{} &= \{ N\in \rep{A}{d} \mid \left[ U, N\right] \geq \left[ U,M\right] \text{ for all }U\in\lmod{A} \},
\end{aligned}
\]
which are also closed sets in $\rep{A}{d}$ \cite[Prop.~1(a)]{Bo6}. Note that $\drankvar{M}{}$ is the intersection of the sets $\drankvar{M}{U}$ as $U$ runs over all indecomposable $A$-modules, and similarly for $\rankvar{M}{}$. 
\end{dfn}

It follows from a result of Auslander and Reiten \cite[Thm.~1.4]{AR-compfactors} that for $M,N\in\rep{A}{d}$ and $U$ indecomposable, one has $[U,N]\geq[U,M]$ if and only if $[N,\tau U]\geq[M,\tau U]$. Since $[P,M]$ and $[M,Q]$ are determined by $\Dim{M}$ when $P$ is projective and $Q$ is injective, and $\tau$ gives a bijection between non-projective indecomposables and non-injective indecomposables, we have $\drankvar{M}{\tau E}=\rankvar{M}{E}$. It follows in the same way that $\drankvar{M}{}=\rankvar{M}{}$; we call this space \emph{the rank variety of $M$}, and typically opt for the notation $\drankvar{M}{}$. This variety has been studied by Bongartz \cite{Bo} and (as a scheme) by Riedtmann--Zwara \cite{RZ}, among others.

\begin{rem}
\label{bigger-E}
Setting $m'_i=m_i$ for $1\leq i\leq t$ and $m_{t+1}'=0$, we have $\drankvar{m}{E}=\drankvar{m'}{E\oplus E_{t+1}}$ and $\rankvar{m}{E}=\rankvar{m'}{E\oplus E_{t+1}}$ for any indecomposable $E_{t+1}$. If $Q$ is injective and $P$ is projective then, since $[M,Q]$ and $[P,M]$ depend only on $\Dim{M}$, it follows that $\drankvar{M}{E}=\drankvar{M}{E\oplus Q}$ and $\rankvar{M}{E}=\rankvar{M}{E\oplus P}$. Thus, when discussing $\drankvar{M}{E}$ or $\drankvar{m}{E}$ we may always assume that $E$ is a cogenerator, and when discussing $\rankvar{M}{E}$ or $\rankvar{m}{E}$ we may always assume that $E$ is a generator, without any loss of generality.

Moreover, if $E$ is such that $\drankvar{M}{E}=\drankvar{M}{}$, we have $\drankvar{M}{E\oplus X}=\drankvar{M}{}$ for any $X\in\lmod{A}$. Thus in this case we are able to assume without loss of generality that the module $E$ in question has any property that may be acquired by taking the direct sum with another module, such as being generating, cogenerating, or satisfying $\gldim\End_A(E)^{\op}<\infty$ \cite[Thm.~1.1]{IyFRD}. The analogous statement holds when $\rankvar{M}{E}=\drankvar{M}{}$.
\end{rem}

By Hilbert's basis theorem, for any $M$ there exist modules $E$ and $E'$ such that $\drankvar{M}{}=\drankvar{M}{E}=\rankvar{M}{E'}$, although $E$ and $E'$ are neither explicitly nor uniquely determined. As a result, it is rarely clear how to find such modules, an obvious exception being when $A$ is representation-finite, in which case both can be taken to be additive generators of $\lmod{A}$.

\begin{rem}
\label{r:rankvars-orbits}
Under certain conditions on $A$, such as if $A$ is representation-finite \cite{Zw5} or tame concealed \cite{Bo6}, the rank variety $\drankvar{M}{}$ coincides (as a set, and as a scheme if one uses the reduced scheme structure) with the closure $\clorbit{M}$ of the $\Gl_d$-orbit of $M$ in $\rep{A}{d}$. Precisely, this happens if and only if the Hom-order and degeneration order coincide \cite[Prop.~1]{Bo6}. In general, the same proposition shows that $\clorbit{M}$ is an irreducible component of (the reduced scheme) $\drankvar{M}{}$. Since there are set theoretic inclusions $\clorbit{M}\subseteq\drankvar{M}{}\subseteq\drankvar{M}{E}$ for any $E\in\lmod{A}$, we have $\clorbit{M}=\drankvar{M}{E}$ for some $E$ if and only if $\clorbit{M}=\drankvar{M}{}$.
\end{rem}

A result of Zwara \cite[Thm.~1.2(4)]{Zw2} allows us to produce modules $E$ and $E'$ with $\drankvar{M}{}=\drankvar{M}{E}=\rankvar{M}{E'}$ more explicitly under certain finiteness conditions on the module $M$, which we now introduce. We use the categories $\gen(M)$ and $\cogen(M)$ from Definition~\ref{d:gen-cogen}.

\begin{dfn}
\label{gen-finite}
Let $M$ be a finite-dimensional $A$-module. We say $M$ is \emph{gen-finite} if there is a finite-dimensional $A$-module $E$ such that $\gen (M) = \add{E}$. Dually, we say $M$ is \emph{cogen-finite} if there is a finite-dimensional $A$-module $E$ such that $\cogen (M) = \add{E}$. 
\end{dfn}

The regular module $A$ is gen-finite (or equivalently $\kdual A$ is cogen-finite) if and only if $A$ is representation-finite. Thus we see gen-finiteness and cogen-finiteness as module-theoretic generalisations of the notion of representation-finiteness for algebras. For gen-finite and cogen-finite modules, rank varieties coincide with orbit closures, as follows.

\begin{thm}[{cf.\ \cite[Thm.~1.2(4)]{Zw2}}]
\label{rankvar=clorbit}
Let $M\in \rep{A}{d}$. If $\gen(M)=\add{E}$, then $\clorbit{M}=\drankvar{M}{E}$, and hence both are equal to $\drankvar{M}{}$ by Remark~\ref{r:rankvars-orbits}. Similarly, if $\cogen(M)=\add{E}$, then $\clorbit{M}=\rankvar{M}{E}=\rankvar{M}{}=\drankvar{M}{}$.
\end{thm}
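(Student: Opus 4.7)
My strategy is to close up the chain $\clorbit{M} \subseteq \drankvar{M}{} \subseteq \drankvar{M}{E}$ by establishing the reverse inclusion $\drankvar{M}{E} \subseteq \clorbit{M}$; equality throughout is then forced. The forward inclusions are routine: the Hom-functions $N \mapsto [N, E_i]$ are upper semi-continuous on $\rep{A}{d}$, so $\drankvar{M}{E}$ is Zariski-closed and contains $M$, hence contains $\clorbit{M}$; the inclusion $\drankvar{M}{} \subseteq \drankvar{M}{E}$ holds because the latter imposes strictly fewer constraints; and $\clorbit{M} \subseteq \drankvar{M}{}$ was recorded in Remark~\ref{r:rankvars-orbits}.

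For the hard inclusion, the essential content is Zwara's theorem \cite[Thm.~1.2(4)]{Zw2}. The idea I would follow is to exploit the Riedtmann--Zwara criterion, which characterises $N \in \clorbit{M}$ by the existence of a module $Z$ fitting into a short exact sequence $0 \to N \to M \oplus Z \to Z \to 0$. Given $N \in \drankvar{M}{E}$ with $\gen(M) = \add{E}$, the Hom-inequalities $[N, E_i] \geq [M, E_i]$ for every indecomposable summand $E_i$ of $E$ should first be shown to propagate, so that in particular $\gen(N) \subseteq \gen(M)$ and hence $N \in \add{E}$. One then constructs the required $Z$ inductively, using the finitely many iso-classes in $\gen(M)$ to bound the induction and taking minimal $\add E$-approximations at each step. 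Alternatively, one may use the Yoneda-type equivalence $\Hom_A(-,E)\colon \add{E} \isoto \proje{\End_A(E)^{\op}}$ to transport the problem to the representation variety of the Auslander-type algebra $\End_A(E)^{\op}$, where questions about orbit closures of projective modules are considerably more tractable.

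The dual statement for cogen-finite $M$ follows formally, either by applying the $\K$-duality $\kdual\colon \lmod{A} \to \lmod{A^{\op}}$, which exchanges $\gen$ with $\cogen$ and the two flavours of rank variety; or by invoking the paper's earlier observation that $\drankvar{M}{} = \rankvar{M}{} = \drankvar{M}{\tau E}$ modulo terms determined by dimension vectors. The main obstacle is unquestionably the reverse inclusion: for arbitrary algebras and modules, finitely many Hom-inequalities do not generally suffice to cut out an orbit closure, and it is precisely the gen-finiteness hypothesis that enables this remarkably compact description. Everything else in the proof is formal repackaging of this single deep input.
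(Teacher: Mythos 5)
Your overall structure agrees with the paper's treatment: the paper gives no proof of this theorem at all, stating it as a citation of Zwara's result (the ``cf.'' in the header), and like you it regards the inclusions $\clorbit{M}\subseteq\drankvar{M}{}\subseteq\drankvar{M}{E}$ as routine consequences of upper semicontinuity and Remark~\ref{r:rankvars-orbits}, with all of the content residing in the reverse inclusion $\drankvar{M}{E}\subseteq\clorbit{M}$, which is exactly \cite[Thm.~1.2(4)]{Zw2}. The dual statement via $\kdual\colon\lmod{A}\to\lmod{A^{\op}}$ (or via the $\tau$-shift identity $\rankvar{M}{E}=\drankvar{M}{\tau E}$ already recorded in the paper) is also fine. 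So as a ``proof'' that defers the hard inclusion to Zwara, your argument is acceptable and matches what the authors do.

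However, your attempted sketch of Zwara's argument contains a false step. You claim that for $N\in\drankvar{M}{E}$ the inequalities $[N,E_i]\geq[M,E_i]$ ``propagate'' to give $\gen(N)\subseteq\gen(M)$ and hence $N\in\add{E}$. This is not true. Take $A=\K Q$ for $Q\colon 1\to 2$, let $M=P(1)$ (so $\gen(M)=\add{E}$ with $E=P(1)\oplus S(1)$), and let $N=S(1)\oplus S(2)$, which has the same dimension vector. Then $[N,P(1)]=1=[M,P(1)]$ and $[N,S(1)]=1=[M,S(1)]$, so $N\in\drankvar{M}{E}$ (and indeed $N\in\clorbit{M}$), yet $N\notin\add{E}$ and $\gen(N)\not\subseteq\gen(M)$. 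So any inductive construction of the Riedtmann--Zwara module $Z$ that begins by placing $N$ itself inside $\add{E}$ cannot work; Zwara's actual argument only produces $Z\in\gen(M)$ (compare Lemma~\ref{degenerate} of the paper, which is the dual of a step in Zwara's proof), and the module $N$ being degenerated to need not lie in $\gen(M)$. Since both you and the paper ultimately cite Zwara for this inclusion, this does not sink the proposal, but the self-contained sketch as written would fail at this point.
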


As in \cite{CBSa}, our first step in constructing a desingularisation of $\drankvar{M}{}$ for an $A$-module $M$ is to realise it as an affine quotient of some variety of representations for another algebra $B$. In fact, we may do this for any of the varieties $\drankvar{m}{E}$ or $\rankvar{m}{E}$.

We begin with $\drankvar{m}{E}$. Assuming without loss of generality that $E$ is a basic cogenerator (see Remark~\ref{bigger-E}), we may decompose $E=\bigoplus_{j=1}^tE_j$, with $E_j=\kdual(e_jA)$ indecomposable injective for $1\leq j\leq n$, and indecomposable non-injective otherwise. 

Let $B$ be the cogenerator-tilted algebra of $E$, and let $e$ be its special idempotent. Since $A\cong eBe$, we may choose a complete set of primitive orthogonal idempotents of $B$ extending that of $A$, and thus write the dimension vector of a $B$-module $X$ as $(d,s)\in\Z^n\times\Z^{t-n}$, where $d=\Dim{eX}$ and $s=\Dim (1-e)X$. We number the components of $d$ from $1$ to $n$, and those of $s$ from $n+1$ to $t$; this is compatible with our numbering of the indecomposable summands of $E$. By restricting the usual action of $\Gl_{(d,s)}=\Gl_d\times\Gl_s$ on $\rep{B}{d,s}$, we get an action of $\Gl_s$ on this representation variety, and may consider the categorical quotient $\rep{B}{d,s}\gitquot\Gl_s$, the affine variety with coordinate ring given by the ring of invariants $\K[\rep{B}{d,s}]^{\Gl_s}$. Now the restriction functor $e$ provides a map
\[e\colon\rep{B}{d,s}\to\rep{A}{d},\]
and by \cite[Lem.~6.3]{CBSa} an induced isomorphism of varieties 
\[
\rep{B}{d,s}\gitquot \Gl_s \isoto \Bild e.
\]
Furthermore, $\Bild e=\{ N\in \rep{A}{d}\mid \Dim c(N) \leq (d,s)\}$ is a closed subset of $\rep{A}{d}$ \cite[Lem.~7.2]{CBSa}; here $c$ is the intermediate extension functor associated to the idempotent $e$. For any injective $A$-module $Q$ and any dimension vector $d$, let
\[\left[d,Q\right]:=\left[N, Q\right] \]
where $N\in\rep{A}{d}$ is arbitrary, noting that $\left[N,Q\right]$ depends only on $\Dim{N}=d$ by injectivity of $Q$. Since this quantity also only depends on $Q$ up to isomorphism, for any $X\in\lmod{A}$ we get a well-defined integer $\left[d,Q(X)\right]$, where $X\to Q(X)$ is a minimal injective envelope. 
We may now state the main result of this subsection, of which Theorem~\ref{affine-quot-intro} is a special case.

\begin{thm}
\label{affine-quot}
Let $E$ be a cogenerating $A$-module, with indecomposable summands labelled as in the preceding paragraph, and $B$ its cogenerator-tilted algebra. Let $d\in\Z_{\geq0}^n$ be a dimension vector for $A$, and let $m\in \Z_{\geq 0}^t$.
\begin{itemize}
\item[(1)] If $\drankvar{m}{E}\ne\varnothing$, then $[d,Q(E_j)]\geq m_j$ for all $j$.
\item[(2)] In this case, we may extend $d$ to a dimension vector $(d,s)\in\Z_{\geq0}^t$ for $B$ by defining $s_j:= [d,Q(E_j)]-m_j$ for $n+1\leq j\leq t$, and the special idempotent $e$ of $B$ induces an isomorphism
\[
 \rep{B}{d,s}\gitquot \Gl_s \isoto \drankvar{m}{E}.
\]
\end{itemize}
\end{thm}

\begin{proof}
Assume $N\in\drankvar{m}{E}\ne\varnothing$. For $1\leq j\leq n$, we have $E_j=\kdual(e_jA)$, and so
\[[d,Q(E_j)]=[d,E_j]=d_j=[N,E_j]\geq m_j.\]
On the other hand, if $n+1\leq j\leq t$ then, by Lemma~\ref{c-description}, we have $\Dim{c(N)}=(d,s')$, where
\begin{equation}
\label{dim-c-calc}
0\leq s'_j=[N,Q(E_j)]-[N,E_j]=[d,Q(E_j)]-[N,E_j],
\end{equation}
and so $[d,Q(E_j)]\geq[N,E_j]\geq m_j$.

Now, as discussed before the statement of the Proposition, it follows from \cite[Lem.~6.3, Lem.~7.2]{CBSa} that the special idempotent $e$ induces an isomorphism
\[\rep{B}{d,s}\gitquot\Gl_s \isoto \Bild e = \{ N\in \rep{A}{d}\mid \Dim c(N) \leq (d,s)\},\]
so it is enough to prove that $\drankvar{m}{E}$ coincides with the codomain. We have $[N,E_j]=d_j\geq m_j$ for any $N\in\rep{A}{d}$ if $1\leq j\leq n$, and so comparing $\Dim{c(N)}$, as calculated in \eqref{dim-c-calc}, to the definition of $s_j$ for $n+1\leq j\leq t$ we see that $\Dim{c(N)}\leq(d,s)$ if and only if $[N,E_j]\geq m_j$ for such $j$.
\end{proof}

\begin{rem}
In conjunction with Lemma~\ref{c-description}, the calculation in the proof of Theorem~\ref{affine-quot} shows that if $\drankvar{m}{E}=\drankvar{M}{E}$ for some $A$-module $M$, i.e.\ if $m_i=[M,E_i]$, then the dimension vector $(d,s)$ for $B$ appearing in part (2) of the theorem is precisely the dimension vector of $c(M)$.
\end{rem}

We now state the dual result for $\rankvar{m}{E}$, which may be proved similarly. In this case we may assume $E$ is a basic generator, and decompose $E=\bigoplus_{j=1}^tE_j$ so that $E_j=Ae_j$ is indecomposable projective for $1\leq j\leq n$, and $E_j$ is indecomposable non-injective otherwise. Let $B$ be the generator-cotilted algebra of $E$, and $e$ its special idempotent, so that again we have $A\cong eBe$. Dual to the earlier statement for injective envelopes, we get a well defined integer $[P(X),d]$ for any $X\in\lmod{A}$ and any dimension vector $d$, by taking $P(X)\to X$ to be a minimal projective cover. The dual of Theorem~\ref{affine-quot} is then the following.
\begin{thm}
\label{affine-quot*}
Let $E$ be a generating $A$-module, with indecomposable summands labelled as in the preceding paragraph, and $B$ its generator-cotilted algebra. Let $d\in\Z_{\geq0}^n$ be a dimension vector for $A$, and let $m\in \Z_{\geq 0}^t$.
\begin{itemize}
\item[(1)] If $\rankvar{m}{E}\ne\varnothing$, then $[P(E_j),d]\geq m_j$ for all $j$.
\item[(2)] In this case, we may extend $d$ to a dimension vector $(d,s)\in\Z_{\geq0}^t$ for $B$ by defining $s_j:= [P(E_j),d]-m_j$ for $n+1\leq j\leq t$, and the special idempotent $e$ of $B$ induces an isomorphism
\[
 \rep{B}{d,s}\gitquot \Gl_s \isoto \rankvar{m}{E}.
\]
\end{itemize}
\end{thm}

As remarked earlier, Hilbert's basis theorem allows us to apply Theorems~\ref{affine-quot} and \ref{affine-quot*} to the rank variety $\drankvar{M}{}$ of a module $M$, by expressing it either as $\drankvar{M}{E}$ for some cogenerator $E$, or as $\rankvar{M}{E'}$ for some generator $E'$.

\begin{exa}
\label{KP-eg}
As mentioned in the introduction, a precursor to Theorem~\ref{affine-quot}(2) can be found in work of Kraft and Procesi \cite[\S3]{KP}. They prove this result in a very special case, which we will now describe, expanding on the explanation in \cite[\S8.3]{CBSa}.

Kraft and Procesi's setting concerns the Zariski closure of the conjugacy class of a nilpotent ($n\times n$)-matrix $\lambda$. Letting $A=\K[t]/(t^n)$ be the truncated polynomial ring, $\lambda$ determines an $n$-dimensional $A$ module $M$ with underlying vector space $\K^n$, on which $t$ acts as multiplication by $\lambda$. The conjugacy class of $\lambda$ is precisely the orbit $\orbit{M}\subset\rep{A}{n}$. Since $A$ is a representation-finite algebra, we have $\clorbit{M}=\drankvar{M}{E}$ for $E$ an additive generator of $\lmod{A}$, and so this variety fits into the setting of Theorem~\ref{affine-quot} (and indeed of \cite[Thm.~7.4]{CBSa}), allowing us to realise it as an affine quotient, as follows.

The indecomposable $A$-modules are, up to isomorphism, $M_i=\K[t]/(t^i)$ for $1\leq i\leq n$, with $M_n$ being the unique indecomposable projective and unique indecomposable injective. Note that $\dim{M_i}=i$, making it convenient for us to also set $M_0=0$. Taking $E=\bigoplus_{i=1}^nM_i$ to be the basic additive generator of $\lmod{A}$, the associated cogenerator tilted algebra is
\[B=\End_{\homot[b]{A}}(Q^E)^{\op},\quad Q^E=\Big(\bigoplus_{i=0}^n(M_i\to M_n)\Big)\]
where, for $i\geq 1$, the map $M_i\to M_n$ is the inclusion sending the generator $1\in M_i$ to $t^{n-i}\in M_n$. (Below, we denote maps of indecomposable $A$-modules by the image of the generator $1$ of the domain, so that the preceding inclusion is simply denoted $t^{n-i}$.) Note that we could exclude the summand $M_n\stackrel{1}{\to} M_n$ from $Q^E$, since it is zero in the homotopy category, but we write it here to make the notation $Q^E$ consistent with that earlier in the paper.

The algebra $B$ is presented by the quiver
\[\begin{tikzcd}
(n-1)\arrow[bend left]{r}&\cdots\arrow[bend left]{l}\arrow[bend left]{r}&2\arrow[bend left]{l}\arrow[bend left]{r}&1\arrow[bend left]{l}\arrow[bend left]{r}&0\arrow[bend left]{l}
\end{tikzcd}\]
with relations as follows: for each vertex $1\leq i\leq n-2$, the two $2$-cycles starting at $i$ are equal, and the unique $2$-cycle starting at $(n-1)$ is zero; cf.\ \cite[p.~232]{KP}. Note that there is no relation involving the $2$-cycle starting at vertex $0$. Thus in this case $B$ is not only derived equivalent to the Auslander algebra $\End_A(E)^{\op}$ of $A$, but even isomorphic to it. In this presentation, vertex $i$ corresponds to the summand $M_i\to M_n$ of $Q^E$, and the arrows $(i-1)\to i$ and $i\to(i-1)$ correspond to the morphisms
\[\begin{tikzcd}
M_{i}\arrow{r}{t^{n-i}}\arrow{d}{1}&M_n\arrow{d}{t}\\
M_{i-1}\arrow{r}{t^{n-(i-1)}}&M_n
\end{tikzcd}\qquad
\begin{tikzcd}
M_{i-1}\arrow{r}{t^{n-(i-1)}}\arrow{d}{t}&M_n\arrow{d}{1}\\
M_{i}\arrow{r}{t^{n-i}}&M_n
\end{tikzcd}\]
respectively, so one can check for example that the $2$-cycle starting at $(n-1)$ corresponds to the morphism
\[\begin{tikzcd}
M_{n-1}\arrow{r}{t}\arrow{d}{t}&M_n\arrow{d}{t}\\
M_{n-1}\arrow{r}{t}&M_n
\end{tikzcd}\]
which is null-homotopic, as claimed. We have $A\cong e_0Be_0$ as expected; $e_0$ is the idempotent corresponding to the summand $(M_0\to M_n)=(0\to\kdual A)$ of $Q^E$.

The conjugacy classes of nilpotent $(n\times n)$-matrices, or equivalently the $\Gl_n$-orbits of $n$-dimensional $A$-modules, are parameterised by partitions of $n$, i.e.\ tuples $p=(p_1,\dotsc,p_n)$ of non-negative integers with $p_1\geq p_2\geq\cdots\geq p_n$ and $\sum_{j=1}^np_j=n$; note that we insist that our partitions always have $n$ elements, which we achieve by allowing some of these elements to be $0$. Precisely, $p$ indexes the conjugacy class of the direct sum of Jordan blocks with eigenvalue $0$ and sizes $p_j$, or equivalently the orbit $\orbit{M}$ of $M=\bigoplus_{j=1}^nM_{p_j}$.

To realise $\clorbit{M}$ as an affine quotient via Theorem~\ref{affine-quot}, we need to compute the dimension vector of the $B$-module $c(M)$. By Lemma~\ref{c-description}, we have
\[c(M_j)(M_i\to M_n)=\Ker\big(\kdual\Hom_A(M_j,M_n)\to\kdual\Hom_A(M_j,M_i)\big),\]
where the map is obtained by applying $\kdual\Hom_A(M_j,-)$ to the inclusion $M_i\to M_n$ appearing as a summand in $Q^E$. Thus $\dim\big(c(M_j)(M_i\to M_n)\big)=\max\{j-i,0\}$, and so
\[\dim\big(c(M)(M_i\to M_n)\big)=\sum_{j=1}^n\max\{p_j-i,0\}=:d_{i}.\]
Then by Theorem~\ref{affine-quot} we have an isomorphism
\[\rep{B}{d}\gitquot\Gl_{d'}\isoto\clorbit{M},\]
where $d'=(d_1,\dotsc,d_{n-1})$, given by restriction to vertex $0$.

We claim that this realisation of $\clorbit{M}$ as an affine quotient variety coincides exactly with that obtained by Kraft and Procesi in \cite[\S3]{KP}. Consider the dual partition $\hat{p}$ with $\hat{p}_j=\#\{k:p_k\geq j\}$ for $1\leq j\leq n$, and write $u_i=\hat{p}_{i+1}+\dotsc+\hat{p}_n$ for $0\leq i\leq n-1$. Then Kraft and Procesi's isomorphism is
\[\rep{B}{u}\gitquot\Gl_{u'}\isoto\clorbit{M},\]
where $u'=(u_1,\dotsc,u_{n-1})$, again given by restriction to vertex $0$; that the variety denoted by $Z$ in \cite{KP} is precisely $\rep{B}{u}$ can be seen directly from its description on \cite[p.~232]{KP}, noting that we have reversed the indexing in our description of the dimension vector, so our $u_0$ is the dimension of their $U_t$. However, we have
\[d_i=\sum_{j=1}^n\max\{p_j-i,0\}=\sum_{j=i+1}^n\#\{k:p_k\geq j\}=u_i.\]
To see the middle equality, note that if one draws the partition $p$ as a Young diagram with $p_i$ blocks in the $i$-th row, both sides count the number of blocks in columns $i+1$ to $n$. Thus $d=u$, and the two realisations coincide as claimed.

Kraft and Procesi use this result to prove that the orbit closure $\clorbit{M}$ is a normal variety, by first using special properties of the algebra $B$ appearing in this example to conclude that $\rep{B}{d}$ is normal.
\end{exa}

\subsection{Desingularisation of orbit closures for gen-finite modules}
As above, let $A$ be a finite-dimensional algebra, and let $B$ be any finite-dimensional algebra possessing an idempotent $e$ with $A\cong eBe$. Recall that this data induces a recollement
\[\begin{tikzcd}[column sep=40pt]
\lmod{B/BeB}\arrow["i" description]{r}&\lmod{B}\arrow["e" description]{r}\arrow[shift right=3,swap]{l}{q}\arrow[shift left=3]{l}{p}&\lmod{A}\arrow[shift right=3,swap]{l}{\ell}\arrow[shift left=3]{l}{r}
\end{tikzcd}\]
as in \eqref{recollement}.

We call the $B$-modules in $\cogen (\kdual (eB))$ \emph{stable}, and those in $\gen (Be)$ \emph{costable}; note that the category of stable modules is closed under taking submodules, and the category of costable modules is closed under taking quotients. By Lemma~\ref{ImageAndKernel}, the image of the intermediate extension functor $c$ corresponding to $e$ is the category of modules which are both stable and costable. For any subset $Z\subseteq \rep{B}{d,s}$ we write $Z^{\st}$ for the set of stable modules in $Z$, and $Z^{\cost}$ for the set of costable modules in $Z$.

The action of $\Gl_s$ on $\rep{B}{d,s}^{\st}$ admits a geometric quotient, and \cite[\S6.3]{CBSa} (following \cite{Ki}) use geometric invariant theory to construct a projective map
\[
\pi \colon \rep{B}{d,s}^{\st} / \Gl_s \to  \rep{B}{d,s}\gitquot \Gl_s \isoto \Bild e
\]
from this geometric quotient to the categorical quotient of the $\Gl_s$-action on the whole representation variety. Now let $M$ be an $A$-module. Choosing a cogenerator $E$ such that $\drankvar{M}{E}=\drankvar{M}{}$, letting $B$ be the cogenerator-tilted algebra of $E$ with special idempotent $e$, and setting $(d,s)=\Dim{c(M)}$, it follows from Theorem~\ref{affine-quot*} that $\Bild{e}=\drankvar{M}{}$, so the above construction gives a projective map $\pi\colon\rep{B}{d,s}^{\st}\to\drankvar{M}{}$. Our aim for the remainder of the section is to show that, when $M$ is gen-finite (so $\drankvar{M}{}=\clorbit{M}$ by Theorem~\ref{rankvar=clorbit}) and $\gen(M)\subseteq\add{E}$, this map $\pi$ above restricts to a desingularisation
\[\pi\colon\clorbit{c(M)}^{\st}/\Gl_s\to\clorbit{M}.\]

We begin with the following lemma, dual to a statement of Zwara \cite[Proof of Thm.~1.2(1)]{Zw2}; for convenience, we give a complete argument.

\begin{lem}\label{degenerate}
Let $\Lambda $ be a finite-dimensional algebra and
\begin{equation}
\label{degen-ses-1}
\begin{tikzcd}[column sep=20pt]
0\arrow{r}&X\arrow{r}&M\oplus Z\arrow{r}&Z\arrow{r}&0
\end{tikzcd}
\end{equation}
a short exact sequence of $\Lambda$-modules. Then there exists an exact sequence
\[\begin{tikzcd}[column sep=20pt]
0\arrow{r}& X \arrow{r}& M\oplus Z' \arrow{r}& Z'  \arrow{r}& 0
\end{tikzcd}\]
with $Z' \in \gen{M}$.
\end{lem}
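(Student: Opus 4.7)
The plan is to give a two-step reduction. The key observation is that the claim becomes almost trivial when the second component of the surjection $M\oplus Z\to Z$ is nilpotent, so the work is in reducing to this case.

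First, I write the surjection as $(f_1,f_2)\colon M\oplus Z\to Z$ with $f_1\colon M\to Z$ and $f_2\colon Z\to Z$. I apply Fitting's lemma to the endomorphism $f_2$ of the finite-dimensional module $Z$, giving a decomposition $Z = Z_F\oplus Z_N$ preserved by $f_2$, on which $f_2$ restricts to an automorphism $\alpha$ on $Z_F$ and a nilpotent endomorphism $\beta$ on $Z_N$. Writing $f_1=(f_1^F,f_1^N)^T$ accordingly, an element $(m,z_F,z_N)\in M\oplus Z_F\oplus Z_N$ lies in the kernel $X$ of the surjection if and only if
\[ f_1^F(m)+\alpha(z_F)=0 \quad\text{and}\quad f_1^N(m)+\beta(z_N)=0. \]
Since $\alpha$ is invertible, the first equation forces $z_F = -\alpha^{-1}f_1^F(m)$, so projection onto $(m,z_N)$ identifies $X$ with the kernel of $(f_1^N,\beta)\colon M\oplus Z_N\to Z_N$. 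Surjectivity of this new map follows from surjectivity of $(f_1,f_2)$ by picking preimages of $(0,y_N)$. This produces a new exact sequence
\[ 0\to X\to M\oplus Z_N\xrightarrow{(f_1^N,\beta)} Z_N\to 0 \]
in which the second component of the surjection is nilpotent.

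Second, I show that under this nilpotency condition, $Z_N$ already lies in $\gen(M)$. Choose $n$ with $\beta^n=0$. For any $z\in Z_N$, surjectivity of $(f_1^N,\beta)$ gives $m_0\in M$ and $z_1\in Z_N$ with $z=f_1^N(m_0)+\beta(z_1)$; iterating this up to $n$ times yields
\[ z = \sum_{k=0}^{n-1}\beta^k f_1^N(m_k) + \beta^n(z_n) = \sum_{k=0}^{n-1}\beta^k f_1^N(m_k). \]
Thus $Z_N = \sum_{k=0}^{n-1}\beta^k f_1^N(M)$ is a sum of images of morphisms from $M$, hence belongs to $\gen(M)$. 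Setting $Z'=Z_N$ completes the argument.

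There is no serious conceptual obstacle; the only delicate point is verifying that the Fitting reduction preserves the shape of the Riedtmann--Zwara sequence, which is essentially bookkeeping once one recognises that the invertibility of $\alpha$ lets one eliminate the $Z_F$-coordinate. The real content of the proof is the short iteration in the nilpotent case, which is exactly what makes finiteness of $\gen(M)$ (gen-finiteness) a natural framework for the subsequent sections.
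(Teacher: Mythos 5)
Your proof is correct and follows essentially the same route as the paper's: reduce via Fitting's lemma to the case where the endomorphism component of the surjection is nilpotent, then iterate surjectivity to exhibit $Z'$ as a quotient of a power of $M$. The only difference is organisational --- the paper splits off a maximal direct summand of the form $(0\to Z''\xrightarrow{\id}Z''\to 0)$ and deduces nilpotency of the remaining endomorphism from Fitting's lemma, whereas you perform the Fitting decomposition explicitly and eliminate the invertible block by solving for the $Z_F$-coordinate; both amount to the same reduction.
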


\begin{proof}
Let $\begin{tikzcd}[column sep=20pt,ampersand replacement=\&]
0\arrow{r}\& X \arrow{r}\& M\oplus Z' \arrow{r}{\left(\begin{smallmatrix}\delta&\gamma\end{smallmatrix}\right)}\& Z'  \arrow{r}\& 0\end{tikzcd}$
be the short exact sequence obtained by splitting off a maximal direct summand of the form
$\smash{\begin{tikzcd}[ampersand replacement=\&,column sep=20pt]
0\arrow{r}\& 0\arrow{r}\& Z'' \arrow{r}{\id}\& Z''\arrow{r}\& 0
\end{tikzcd}}$
from \eqref{degen-ses-1}, so that $\gamma\in\End_{\Lambda}(Z')$ is nilpotent. We claim that then $Z'\in\gen(M)$, so this is our desired sequence. By induction on $k$, one proves that
\[ 
( \delta,  \gamma \delta ,  \gamma^2 \delta , \ldots , \gamma^{k-1} \delta  , \gamma^k)\colon M^k \oplus Z' \to Z'
\] 
is surjective for any $k \geq 1$. When $k\gg0$ we have $\gamma^k=0$, and thus the map $(\delta,\gamma\delta,\dotsc,\gamma^{k-1}\delta)\colon M^k \to Z'$ is an epimorphism.
\end{proof}

The main step in our argument is the following theorem, characterising the stable modules in $\clorbit{c(M)}$ and giving a sufficient condition for them to be smooth points of this variety.

\begin{thm} \label{Zwara} Let $A$ and $B$ be basic algebras with $A\cong eBe$ for some idempotent $e$. Let $\widetilde{N}\in \rep{B}{d,s} $ and write $N=e\widetilde{N}\in \rep{A}{d}$. Then $\widetilde{N} \in \clorbit{c(M)}^{\st}$ for $M\in\rep{A}{d}$ if and only if there is an exact sequence
\begin{equation}
\label{degen-seq}
\begin{tikzcd}[column sep=20pt]0\arrow{r}& N\arrow{r}&M\oplus Z \arrow{r}{p} &Z\arrow{r}&0\end{tikzcd}
\end{equation}
such that $\widetilde{N}\cong \Ker c(p) $. 
If moreover $c(M\oplus Z)$ is rigid, then $\widetilde{N}$ is a smooth point of $\clorbit{c(M)}^{\st} $.  
\end{thm}

\begin{proof}
If there is an exact sequence \eqref{degen-seq}, applying $c$ to it gives an exact sequence
\[\begin{tikzcd}[column sep=20pt]
0\arrow{r}& \widetilde{N} \arrow{r}& c(M) \oplus c(Z) \arrow{r}& c(Z) \arrow{r}& 0.
\end{tikzcd}\]
By \cite[Thm.~1]{Zw}, this implies $\widetilde{N} \in \clorbit{c(M)}$. Since modules in the image of $c$ are stable, and $\widetilde{N}$ is a submodule of such a module, it is also stable.

Conversely, assume $ \widetilde{N} \in \clorbit{c(M)}^{\st}$. Using \cite[Thm.~1]{Zw} again, we may find an exact sequence
\[\begin{tikzcd}[column sep=20pt]
0\arrow{r}& \widetilde{N} \arrow{r}& c(M) \oplus \widetilde{Z}   \arrow{r}{\widetilde{p}}& \widetilde{Z} \arrow{r}& 0.
\end{tikzcd}\]
By Lemma~\ref{degenerate} we may choose this exact sequence so that $\widetilde{Z}\in\gen(c(M))$, and so $\widetilde{Z}$ is costable. Applying $e$ to this sequence, and writing $p=e(\widetilde{p})$ and $Z=e\widetilde{Z}$, we obtain
\[\begin{tikzcd}[column sep=20pt]
0\arrow{r}&N\arrow{r}&M\oplus Z\arrow{r}{p}&Z\arrow{r}&0.
\end{tikzcd}\]
We claim that this is our desired sequence \eqref{degen-seq}, i.e.\ that $\widetilde{N}\cong\ker{c(p)}$. Since any costable $B$-module $X$ has a natural epimorphism $q\colon X\to ce(X)$ \cite[Lem.~2.4]{CBSa}, there is a commutative diagram 

\[\begin{tikzcd}[ampersand replacement=\&]
0\arrow{r}\&\widetilde{N}\arrow{r}\&c(M)\oplus \widetilde{Z}\arrow{d}{\left(\begin{smallmatrix}1&0\\0&q\end{smallmatrix}\right)}\arrow{r}{\widetilde{p}}\&\widetilde{Z}\arrow{r}\arrow{d}{q}\&0\\
0\arrow{r}\&\Ker{c(p)}\arrow{r}\&c(M)\oplus c(Z)\arrow{r}{c(p)}\&c(Z)\arrow{r}\&0.
\end{tikzcd}\]
This induces a morphism $\varphi \colon \widetilde{N} \to \Ker c(p)$, and since $e(q)$ is an isomorphism, so is $e(\varphi)$. Now the unit $\varepsilon\colon\id\to re$ induces a commutative diagram

\[\begin{tikzcd}[column sep=30pt]
\widetilde{N}\arrow{r}{\varepsilon_{\widetilde{N}}}\arrow{d}{\varphi}&r(N)\arrow{d}{re(\varphi)}\\
\Ker{c(p)}\arrow{r}{\varepsilon_{\ker{c(p)}}}&re(\Ker{c(p)})
\end{tikzcd}\]
and stability of $\widetilde{N}$ implies \cite[Lem.~2.3]{CBSa} that $\varepsilon_{\widetilde{N}}$ is a monomorphism. Moreover, $re(\varphi)$ is an isomorphism (since $e(\varphi)$ is), and so $\varphi $ is a monomorphism. But $\Dim \widetilde{N} = \Dim{c(M)}=\Dim \Ker c(p)$, so $\varphi $ is an isomorphism as required.

It remains to show that if we have a sequence \eqref{degen-seq} such that $c(M\oplus Z)$ is rigid, then $\widetilde{N}$ is a smooth point of $\clorbit{c(M)}^{\st}$. Since the stable locus is open, it suffices to show that $\widetilde{N}$ is a smooth point of $\clorbit{c(M)}$. Since we have an exact sequence
\begin{equation}
\label{eq:smoothpt}
\begin{tikzcd}[column sep=20pt]
0\arrow{r}& \widetilde{N} \arrow{r}& c(M \oplus Z) \arrow{r}& c(Z) \arrow{r}& 0,
\end{tikzcd}
\end{equation}
it follows from the dual of \cite[Prop.~2.2]{Zw2} that it is even enough to show
\[\dim\Hom_B(\widetilde{N}, c(M \oplus Z)) =\dim\Hom_B(c(M) , c(M \oplus Z)).\]
But applying $\Hom_B(-, c(M\oplus Z))$ to the sequence \eqref{eq:smoothpt} yields
\[\begin{tikzcd}[column sep=17pt]
0\arrow{r}&\Hom_B(c(Z),c(M\oplus Z))\arrow{r}&\Hom_B(c(M\oplus Z),c(M\oplus Z))\arrow{r}&\Hom_B(\widetilde{N},c(M\oplus Z))\arrow{r}&0,
\end{tikzcd}\]
since $\Ext^1_B(c(Z),c(M\oplus Z))=0$ by the rigidity of $c(M\oplus Z)$, and so the desired equality follows.
\end{proof}

We are now ready to describe our desingularisation for the orbit closure of a gen-finite module.

\begin{thm}\label{gen-finite-desing}
Assume $M\in\lmod{A}$ is gen-finite. Let $E$ be a basic cogenerator with $\gen(M)\subseteq \add E$, and let $B$ be the cogenerator-tilted algebra of $E$ with special idempotent $e$. For $c\colon\lmod{A}\to\lmod{B}$ the intermediate extension functor corresponding to $e$, write $(d,s)=\Dim{c(M)}$, and let $\pi\colon\rep{B}{d,s}^{\st}/\Gl_s\to\Bild{e}$ be the projective map constructed in \cite[\S6.3]{CBSa}. Then the restriction
\[\pi \colon \clorbit{c(M)}^{\st} / \Gl_s \to \clorbit{M}\]
is a desingularisation with connected fibres. 
\end{thm}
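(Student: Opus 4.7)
The overall strategy is to assemble the four required properties (properness, source smoothness and irreducibility, birationality, and connected fibres) using the algebraic machinery already developed. First, combining the assumption $\gen(M)\subseteq\add E$ (via Remark~\ref{bigger-E}) with the gen-finiteness of $M$ (via Theorem~\ref{rankvar=clorbit}) yields $\drankvar{M}{E}=\drankvar{M}{}=\clorbit{M}$, and Proposition~\ref{affine-quot} identifies the target $\Bild e$ of the GIT map from \cite[\S6.3]{CBSa} with $\clorbit{M}$. Restricting the projective map $\rep{B}{d,s}^{\st}/\Gl_s\to\Bild e$ to $\clorbit{c(M)}\subseteq\rep{B}{d,s}$ then gives a projective map $\pi$, and the source $\clorbit{c(M)}^{\st}/\Gl_s$ is irreducible because $\clorbit{c(M)}$ is irreducible and the stable locus is an open dense subset (non-empty, as it contains $c(M)$).

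For surjectivity and smoothness I would invoke Theorem~\ref{Zwara} in both directions. Given $N\in\clorbit{M}$, Zwara's theorem supplies an exact sequence $0\to N\to M\oplus Z\to Z\to 0$ which, by Lemma~\ref{degenerate}, may be chosen with $Z\in\gen(M)\subseteq\add E$; then $\widetilde{N}:=\Ker c(p)$ is stable, lies in $\clorbit{c(M)}$ by Theorem~\ref{Zwara}, and satisfies $e\widetilde{N}=N$, giving surjectivity. Conversely, since $M\oplus Z\in\add E$, Theorem~\ref{old-proj+inj-res} ensures that $c(M\oplus Z)$ is rigid, so the smoothness criterion in Theorem~\ref{Zwara} applies at every $\widetilde{N}\in\clorbit{c(M)}^{\st}$. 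The $\Gl_s$-action on the stable locus is free: any automorphism $\phi$ of a stable $\widetilde{N}$ restricting to $\id_{e\widetilde{N}}$ satisfies $re(\phi)=\id$, and naturality together with injectivity of the unit $\widetilde{N}\hookrightarrow re(\widetilde{N})$ then forces $\phi=\id$. Hence the quotient $\clorbit{c(M)}^{\st}/\Gl_s$ is smooth.

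The main remaining step is birationality, followed by connectedness of fibres. For birationality I would establish the following uniqueness principle: whenever $\Dim c(N)=(d,s)$, any stable $\widetilde{N}$ with $e\widetilde{N}=N$ and $\Dim\widetilde{N}=(d,s)$ coincides with $c(N)$ as a submodule of $r(N)$. Stability produces the unit inclusion $\widetilde{N}\hookrightarrow r(N)$, while the counit map $\ell(N)\to\widetilde{N}$ adjunct to $\id_N\colon N\to e\widetilde{N}$ composes with this inclusion to give the canonical morphism $\ell(N)\to r(N)$, whose image in $r(N)$ is by definition $c(N)$; this image is contained in $\widetilde{N}$, and matching dimension vectors then force $\widetilde{N}=c(N)$. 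Over the open orbit $\orbit{M}\subseteq\clorbit{M}$, where $\Dim c(N)=\Dim c(M)=(d,s)$, the fibre of $\pi$ over each $N\in\orbit{M}$ is therefore the single point $\{[c(N)]\}$, which establishes birationality. Finally, to upgrade this to connectedness of all fibres, I would combine the smoothness of the source with the unibranch property of $\clorbit{M}$ proved by Zwara \cite[Thm.~1.2]{Zw2}: in the Stein factorisation $\pi=g\circ f$, normality of the source makes $\Spec\pi_*\mcO$ normal, so the finite birational $g$ is the normalisation map of $\clorbit{M}$; unibranchness turns this into a bijection on points, and hence the fibres of $\pi$ coincide with those of $f$, which are connected by construction. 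I expect this last ingredient, upgrading generic single fibres to connected fibres over the entire target without a normality assumption, to be the main obstacle, depending as it does on Zwara's non-trivial unibranchness theorem for gen-finite orbit closures.
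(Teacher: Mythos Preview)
Your proposal is correct and follows essentially the same strategy as the paper: identify the target via Theorem~\ref{rankvar=clorbit} and Proposition~\ref{affine-quot}, obtain smoothness of $\clorbit{c(M)}^{\st}$ from Theorem~\ref{Zwara} together with Lemma~\ref{degenerate} and the rigidity statement of Theorem~\ref{old-proj+inj-res}, and deduce connectedness of fibres from Zwara's unibranchness result. The only difference is cosmetic: where the paper cites \cite[Prop.~4.5(2)]{ES} for the isomorphism over $\orbit{M}$ and \cite[Lem.~4.1(1)]{Zw2} for the passage from unibranchness to connected fibres, you spell out both arguments directly (and additionally make the surjectivity and freeness of the $\Gl_s$-action explicit), but the underlying ideas are the same.
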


\begin{proof}
First note that by Theorem~\ref{rankvar=clorbit} and Theorem~\ref{affine-quot} we have $\Bild{e}=\drankvar{M}{}=\clorbit{M}$, so the codomain is as claimed. Since $\clorbit{c(M)}^\st\subseteq\rep{B}{d,s}^{\st}$ is a closed subscheme, the restricted map is still projective.

Next we prove that $\clorbit{c(M)}^{\st}$ is smooth. Let $\widetilde{N} \in \clorbit{c(M)}^{\st}$, write $N=e\widetilde{N}$ and take an exact sequence
\[\begin{tikzcd}[column sep=20pt]
0\arrow{r}&N\arrow{r}&M\oplus Z\arrow{r}&Z\arrow{r}&0
\end{tikzcd}\]
as in Theorem~\ref{Zwara}(2). By Lemma~\ref{degenerate}, we may choose this sequence so that $Z\in\gen(M)\subseteq\add{E}$. Thus $c(M\oplus Z)\subseteq\add{c(E)}$ is rigid by Theorem~\ref{dual-projres}, and by Theorem~\ref{Zwara} again we see that $\widetilde{N}$ is smooth.

By \cite[Prop.~4.5(2)]{ES}, the functor $c$ induces an isomorphism $\orbit{M} \isoto \pi^{-1} (\orbit{M})= \orbit{c(M)}/\Gl_s$ inverse to $\pi|_{\orbit{c(M)}}$, so $\pi$ is an isomorphism over $\orbit{M}$, and hence a desingularisation. Finally, since $\clorbit{M}$ is unibranch \cite[Thm.~1.2(3)]{Zw2}, the fibres of any desingularisation are connected \cite[Lem.~4.1(1)]{Zw2}.
\end{proof}

By dualising the argument, we may also construct a desingularisation of $\clorbit{M}$ when $M$ is cogen-finite, by using a generator $E$ such that $\rankvar{M}{E}=\drankvar{M}{}=\clorbit{M}$, and using Theorem~\ref{affine-quot*} to express this as an affine quotient variety. We leave it to the reader to dualise Lemma~\ref{degenerate} and Theorem~\ref{Zwara}, and state only the dual of Theorem~\ref{gen-finite-desing}.

\begin{thm}[Dual to Theorem~\ref{gen-finite-desing}]\label{cogen-finite-desing}
Assume $M\in\lmod{A}$ is cogen-finite. Let $E$ be a basic generator with $\cogen(M)\subseteq \add E$, and let $B$ be the generator-cotilted algebra of $E$ with special idempotent $e$. For $c\colon\lmod{A}\to\lmod{B}$ the intermediate extension functor corresponding to $e$, write $(d,s)=\Dim{c(M)}$. One can construct a map $\pi\colon\rep{B}{d,s}^{\cost}/\Gl_s\to\Bild{e}$ dual to that of \cite[\S6.3]{CBSa}. Then the restriction
\[\pi \colon \clorbit{c(M)}^{\cost} / \Gl_s \to \clorbit{M}\]
is a desingularisation with connected fibres. 
\end{thm}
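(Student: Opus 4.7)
The proof proceeds by dualising the argument for Theorem~\ref{gen-finite-desing} throughout, with generators/cogenerators, submodules/quotients, and stable/costable modules swapped. I sketch the structure and indicate the dual inputs that are needed. First, I would verify that the codomain is correct: combining Proposition~\ref{affine-quot*} with the dual of Theorem~\ref{rankvar=clorbit} (which holds automatically since $\cogen(M) = \add E$ gives $\clorbit{M} = \rankvar{M}{E}$) shows that $\Bild e = \clorbit{M}$. The dual construction of the projective map $\pi$ is built by linearising the $\Gl_s$-action on $\rep{B}{d,s}$ using the opposite stability character to \cite[\S6.3]{CBSa}, so that the semistable locus consists precisely of the costable modules; standard GIT then yields a projective map $\pi\colon\rep{B}{d,s}^{\cost}/\Gl_s\to\rep{B}{d,s}\gitquot\Gl_s\cong\Bild e$, and its restriction to the closed subscheme $\clorbit{c(M)}^{\cost}$ is still projective.

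Second, I would formulate and prove the dual of Theorem~\ref{Zwara}, characterising $\widetilde{N}\in\clorbit{c(M)}^{\cost}$ as precisely those $\widetilde{N}$ for which there is a short exact sequence
\[
0\longrightarrow Z\xrightarrow{\iota} M\oplus Z\longrightarrow N\longrightarrow 0,\qquad \widetilde{N}\cong\coker c(\iota),
\]
where $N=e\widetilde{N}$. The forward direction uses Zwara's criterion \cite{Zw} together with the dual of Lemma~\ref{degenerate} (allowing one to arrange $Z\in\cogen(M)\subseteq\add E$), followed by the natural monomorphism $ce(X)\hookrightarrow X$ for costable $X$ in place of the epimorphism used in Theorem~\ref{Zwara}, and the dual of \cite[Lem.~2.3]{CBSa} to deduce the needed injectivity. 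The backward direction uses Zwara's criterion in the other direction, together with the fact that cokernels of costable modules remain costable. The smoothness criterion (final clause of Theorem~\ref{Zwara}) dualises via \cite[Prop.~2.2]{Zw2} rather than its dual.

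Third, smoothness of $\clorbit{c(M)}^{\cost}$ follows: for any $\widetilde{N}$ in this locus, the dual of Theorem~\ref{Zwara} produces a sequence with $Z\in\add E$, so $c(M\oplus Z)\in\add c(E)$. By Theorem~\ref{dual-projres}, $c(E)$ is rigid, hence so is $c(M\oplus Z)$, and the smoothness conclusion of the dual Theorem~\ref{Zwara} applies. As in Theorem~\ref{gen-finite-desing}, the functor $c$ provides an isomorphism $\orbit{M}\isoto\pi^{-1}(\orbit{M})=\orbit{c(M)}/\Gl_s$ via \cite[Prop.~4.5(2)]{ES}, so $\pi$ is an isomorphism over the dense orbit and thus a desingularisation. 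Connected fibres follow because $\clorbit{M}$ is unibranch, which for cogen-finite $M$ is obtained by applying \cite[Thm.~1.2(3)]{Zw2} to $\kdual M$ as an $A^{\op}$-module (noting $\cogen$-finiteness over $A$ is equivalent to $\gen$-finiteness over $A^{\op}$, and orbit closures are preserved by the duality).

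The main obstacle I anticipate is a clean dualisation of the GIT construction and the characterisation of costable modules, ensuring that the counit $\ell e\to\id$ (replacing the unit $\id\to re$ from the stable case) behaves correctly in the argument corresponding to \cite[Lem.~2.3]{CBSa}; once this piece of homological input is dualised, each remaining step transfers essentially verbatim from the gen-finite case.
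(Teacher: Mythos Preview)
Your proposal is correct and matches the paper's approach exactly: the paper gives no explicit proof of this theorem, stating only that one should dualise Lemma~\ref{degenerate}, Theorem~\ref{Zwara}, and Theorem~\ref{gen-finite-desing}, which is precisely what you have outlined. One small slip: in your second paragraph, the natural monomorphism $ce(X)\hookrightarrow X$ that replaces the epimorphism from \cite[Lem.~2.4]{CBSa} applies to \emph{stable} $X$ (not costable), since in the dual argument it is $\widetilde{Z}\in\cogen(c(M))$ that is stable; the dual of \cite[Lem.~2.3]{CBSa} (counit $\ell e\widetilde{N}\to\widetilde{N}$ is an epimorphism for costable $\widetilde{N}$) is then what forces the comparison map $\coker c(\iota)\to\widetilde{N}$ to be surjective.
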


\section{Desingularisation of quiver Grassmannians}
\label{quiver-Grassmannians}

In this section we take $\K$ to be an algebraically closed field, for compatibility with \cite[\S7]{CBSa}, and let $A=\K Q/I$ be a finite-dimensional algebra presented by a finite quiver $Q$ and an admissible ideal $I$. Let $M$ be a finite-dimensional $A$-module and $d\in \Z_{\geq0}^{Q_0}$ a dimension vector.  Our aim in this section is to desingularise the quiver Grassmannian $\Gra{A}{M}{d}$ of $d$-dimensional submodules of $M$, in the case that $M$ is gen-finite. The majority of this section is devoted to proving the following theorem, from which we obtain smooth varieties to use in constructing our desingularisation.

\begin{thm}
\label{Gr-smooth}
Let $M$ be a gen-finite module and let $E$ be the cogenerator given by the direct sum of all indecomposable modules in $\gen(M)$ together with any remaining indecomposable injectives. Let $B$ be the cogenerator-tilted algebra of $E$ with special idempotent $e$, let $c$ be the intermediate extension associated to $e$, and let $(d,s)$ be a dimension vector for $B$. Then if the Grassmannian $\Gra{B}{c(M)}{d,s}$ is non-empty, it is (scheme-theoretically) smooth and equidimensional. 
\end{thm}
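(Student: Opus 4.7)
The strategy is the classical tangent-obstruction calculation for quiver Grassmannians: at any $\K$-point $U\in\Gra{B}{c(M)}{d,s}$, the Zariski tangent space is $\Hom_B(U,c(M)/U)$, and the first-order obstruction space for deformations of $U$ is $\Ext^1_B(U,c(M)/U)$. Vanishing of this $\Ext^1$-group makes the Grassmannian scheme-theoretically smooth at $U$; equidimensionality will follow once we show $\dim_\K\Hom_B(U,c(M)/U)$ is constant on the Grassmannian.

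Since $M$ is gen-finite with $\gen(M)\subseteq\add E$, we have $M\in\add E$, and so $c(M)$ is a direct summand of the cotilting module $c(E)=\kdual T_P$ by Theorem~\ref{int-ext-calcs}(1). Theorem~\ref{old-proj+inj-res} then gives $\idim_B c(M)\leq 1$, the rigidity $\Ext^1_B(c(M),c(M))=0$, and more generally $\Ext^1_B(c(N),c(M))=0$ for every $N\in\lmod{A}$. Applying $\Hom_B(-,c(M))$ to the short exact sequence $0\to U\to c(M)\to c(M)/U\to 0$ and combining rigidity with $\Ext^2_B(-,c(M))=0$ yields $\Ext^1_B(U,c(M))=0$; applying $\Hom_B(U,-)$ to the same sequence then produces the crucial isomorphism
\[
\Ext^1_B(U,c(M)/U)\cong\Ext^2_B(U,U).
\]

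The main obstacle is therefore to prove $\Ext^2_B(U,U)=0$ for every stable submodule $U$ of $c(M)$. My approach would be to exploit the recollement: stability of $U$ forces the canonical morphism $\ell(eU)\to r(eU)$ to factor as $\ell(eU)\twoheadrightarrow c(eU)\hookrightarrow U\hookrightarrow r(eU)$, yielding a short exact sequence
\[
0\to c(eU)\to U\to i(U')\to 0
\]
in which $c(eU)\in\Bild c$ and $i(U')\in\Ker e$. The $\Ext^2$ computation should then split into pieces controllable on two sides: on the $\Bild c$ side by the strong vanishing of $\Ext^1_B(c(-),c(?))$ against all images of $c$ from Theorem~\ref{old-proj+inj-res}, and on the $\Ker e$ side by the properties of the recollement. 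The gen-finiteness hypothesis is decisive here, because it forces $M/eU\in\gen(M)\subseteq\add E$, so that $c(M/eU)$ is again a summand of $c(E)$ and inherits the cotilting-type vanishing, which we can then propagate along the above filtration of $U$.

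For equidimensionality, once smoothness is in hand the local dimension at any point $U$ coincides with $\dim_\K\Hom_B(U,c(M)/U)$. Combining the vanishing of $\Ext^i_B(U,c(M)/U)$ for $i\geq 1$ (the case $i=1$ by the above, and $i\geq 2$ extracted from analogous long exact sequences using $\idim_B c(M)\leq 1$) with the fact that $(d,s)$ and $\Dim c(M)-(d,s)$ are fixed on the Grassmannian should show this dimension equals a numerical Euler form, hence is constant. The truly hard step is unquestionably the vanishing of $\Ext^2_B(U,U)$; it is here that the careful choice of $E$ as the sum of indecomposables in $\gen(M)$ together with the missing indecomposable injectives, in conjunction with the cotilting structure of $c(E)$ and the recollement with $A$, should prove essential.
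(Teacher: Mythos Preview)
Your reduction to the vanishing of $\Ext^1_B(U,c(M)/U)$, equivalently $\Ext^2_B(U,U)$, is correct and is essentially the content of the paper's Lemma~\ref{Gr-lem1}. The gap is in your proposed mechanism for establishing this vanishing. The recollement filtration $0\to c(eU)\to U\to i(U')\to 0$ is the wrong decomposition: the module $eU$ is a \emph{submodule} of $M$, not a quotient, so there is no reason for $eU$ to lie in $\gen(M)\subseteq\add E$. Consequently $c(eU)$ enjoys none of the homological control coming from Theorem~\ref{old-proj+inj-res}, and the pieces of your filtration are not ``controllable'' in the way you hope. You correctly observe that $M/eU\in\gen(M)$, but this concerns $e$ applied to the \emph{quotient} $c(M)/U$, and there is no evident route from that observation back to $\Ext^2_B(U,U)$ via your filtration of $U$.

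The paper instead builds a length-one $\add C$-coresolution of $U$, with $C=c(E)$: take a minimal left $\add c(N)$-approximation $f\colon U\to C_0$ (where $E=N\oplus Q$ with $\add N=\gen(M)$), necessarily injective since $U\subseteq c(M)\in\add c(N)$, and set $C_1=\coKer f$. The approximation property and rigidity of $c(E)$ force $C_1\in\cogen(c(E))$; since $C_1\in\gen(c(E))$ as well, one gets $C_1\in\Bild c$, and now $eC_1$ is a \emph{quotient} of $eC_0\in\add N$, so $eC_1\in\gen(M)\subseteq\add E$ and hence $C_1=c(eC_1)\in\add C$. This is the point where gen-finiteness actually bites, and it is fundamentally a statement about quotients, not submodules. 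From the resulting coresolution $0\to U\to C_0\to C_1\to 0$ one reads off $\Ext^{\geq1}_B(U,C)=0$ and then $\Ext^{\geq2}_B(U,U)=0$ (Lemma~\ref{Gr-lem2}). A secondary issue: your equidimensionality argument invokes a numerical Euler form, which only makes sense when $\gldim B<\infty$, an assumption not available here; the paper obtains equidimensionality geometrically inside the proof of Lemma~\ref{Gr-lem1}, by realising the Grassmannian as a fibre of a map between smooth varieties.
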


Before proving this theorem, we require some preparation.

\begin{lem}
\label{Gr-lem1}
Let $B$ be a finite-dimensional basic algebra and $X$ a finite-dimensional $B$-module with $\idim X \leq 1$ and $\Ext^1_{B} (X,X) =0$. If $d$ is such that $\Gra{B}{X}{d}\ne\varnothing$ and $\Ext^1_B(U, X/U)=0$ for every $U \in \Gra{B}{X}{d }$, then $\Gra{B}{X}{d }$ is smooth and equidimensional. 
\end{lem}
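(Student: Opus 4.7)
The plan is to establish smoothness and equidimensionality pointwise, using the standard identification of the Zariski tangent space $T_U \Gra{B}{X}{d} \cong \Hom_B(U, X/U)$ at each point $U$, obtained by realising the quiver Grassmannian as a closed subscheme of the ordinary Grassmannian of $d$-dimensional subspaces of $X$.

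Smoothness will follow from an obstruction-theoretic criterion: first-order deformations of the inclusion $U \hookrightarrow X$ are classified by $\Hom_B(U, X/U)$, and the obstruction to lifting any such deformation to a higher-order deformation lives in $\Ext^1_B(U, X/U)$; this is a standard fact from the deformation theory of quiver Grassmannians. Since the obstruction space vanishes at every $U \in \Gra{B}{X}{d}$ by hypothesis, every point is smooth, and the local dimension at $U$ equals $[U, X/U]$.

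For equidimensionality I would show that this local dimension $[U, X/U]$ is constant on $\Gra{B}{X}{d}$. Applying $\Hom_B(-, X)$ to the short exact sequence $0 \to U \to X \to X/U \to 0$ and using both $\idim X \leq 1$ and $\Ext^1_B(X, X) = 0$ yields $\Ext^i_B(U, X) = 0$ for all $i \geq 1$; taking a minimal injective coresolution $0 \to X \to I^0 \to I^1 \to 0$ then gives $[U, X] = [U, I^0] - [U, I^1]$, a quantity depending only on $\underline{\dim}\, U$. Applying $\Hom_B(U, -)$ to the same short exact sequence and invoking this vanishing produces the relation $[U, X/U] = [U, X] - [U, U] + [U, U]^1$, along with isomorphisms $\Ext^i_B(U, X/U) \cong \Ext^{i+1}_B(U, U)$ for $i \geq 1$; the hypothesis $\Ext^1_B(U, X/U) = 0$ then forces $\Ext^2_B(U, U) = 0$.

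The main obstacle is to upgrade this partial rigidity to the full vanishing $\Ext^{\geq 2}_B(U, U) = 0$, so that $[U, U] - [U, U]^1$ coincides with the ordinary Euler form and hence depends only on $\underline{\dim}\, U$. One natural route is to iterate the long-exact-sequence chase above; another is to observe that every $U \in \Gra{B}{X}{d}$ lies in the torsion-free class $\{V : \Ext^1_B(V, X) = 0\}$ cogenerated by the partial cotilting module $X$, inside which the higher self-extensions of $U$ ought to be controlled uniformly by $\idim X \leq 1$.
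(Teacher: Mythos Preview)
Your smoothness argument via obstruction theory is sound: the vanishing of $\Ext^1_B(U,X/U)$ does imply that the Quot functor is formally smooth at $U$, so each point is a regular point with local dimension $[U,X/U]$. This is essentially the same mechanism the paper uses, repackaged; the paper encodes the pair $(U\subset X)$ as a module $\widehat{U}$ over the triangular matrix algebra $\Lambda=\bigl(\begin{smallmatrix}B&B\\0&B\end{smallmatrix}\bigr)$, computes $\Ext^1_\Lambda(\widehat{U},\widehat{X}/\widehat{U})\cong\Ext^1_B(U,X/U)$, and deduces $\Ext^2_\Lambda(\widehat{U},\widehat{U})=0$.

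The genuine gap is in your equidimensionality step. You correctly extract $[U,X/U]=[U,X]-[U,U]+[U,U]^1$ with $[U,X]$ constant, and you correctly note that what remains is to show $[U,U]-[U,U]^1$ depends only on $d$. But neither of your proposed routes closes this. Iterating the long exact sequence gives $\Ext^i_B(U,X/U)\cong\Ext^{i+1}_B(U,U)$ for $i\geq1$, so the hypothesis only yields $\Ext^2_B(U,U)=0$; you have no control over $\Ext^{\geq2}_B(U,X/U)$ and hence none over $\Ext^{\geq3}_B(U,U)$. More fundamentally, even if you had $\Ext^{\geq2}_B(U,U)=0$, the identification of $[U,U]-[U,U]^1$ with an Euler pairing $\langle d,d\rangle$ requires this pairing to be well-defined on dimension vectors, which needs $\gldim B<\infty$ (or at least finite projective or injective dimension of $U$). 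The lemma makes no such assumption, and indeed the paper's later Remark explicitly flags $\gldim B<\infty$ as an \emph{additional} hypothesis under which the Euler-form computation becomes available.

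The paper's proof avoids this entirely by exploiting the ambient geometry rather than homological bilinear forms. Working inside the representation variety $\rep{\Lambda}{d,s}$, every point of the relevant open set $H$ has $\Ext^2_\Lambda=0$ and is therefore a smooth point of $\rep{\Lambda}{d,s}$; crucially, smooth points of a representation variety cut out by a fixed system of relations all have the \emph{same} tangent dimension, so $H$ and its quotient $\underline{H}$ are equidimensional. The quiver Grassmannian is then a fibre of a $\Gl_s$-equivariant map $\underline{H}\to\mathcal{O}_X$, and a short exact sequence of tangent spaces together with Chevalley's theorem on fibre dimensions gives both smoothness and the constant value $\dim\underline{H}-\dim\mathcal{O}_X$ for the local dimension. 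The passage through $\Lambda$ is precisely what supplies the uniform dimension count that your Euler-form approach cannot reach without extra hypotheses on $B$.
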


\begin{proof}
We first give a convenient realisation of $\Gra{B}{X}{d}$. Let $\Lambda=\left(\begin{smallmatrix}B&B\\0&B\end{smallmatrix}\right)$, so that a $\Lambda $-module is precisely the data of a $B$-linear morphism between two $B$-modules. Let $r := \Dim X$, and consider the representation space $\rep{\Lambda}{d,r}$, parametrising morphisms of $B$-modules with $d$-dimensional domain and $r$-dimensional codomain.

Note that since $\Ext^1_B(X,X)=0$, the orbit $\orbit{X}\subset\rep{B}{r}$ is open, and thus so is its preimage under the canonical projection $\pi:\rep{\Lambda}{d,r}\to\rep{B}{r}$. Let $H$ be the intersection of $\pi^{-1}(\orbit{X})$ with the open subset of $\rep{\Lambda}{d,r}$ consisting of monomorphisms, so that $H$ is again open. Moreover, $H$ is a principal $\Gl_d$-fibre bundle over the set $\underline{H}$ of points $Z\xrightarrow{f}Y$ of $\rep{\Lambda}{d,r}$ where $f$ is a set-theoretic inclusion; we denote such a point by $(Z\subset Y)$. Explicitly, the bundle is defined by $p\colon H\to \underline{H}$ with $p(Z\xrightarrow{f} Y)= (\Bild f \subset Y)$. By construction we have a projective $\Gl_{r}$-equivariant map $\pi \colon \underline{H} \to \orbit{X}$, $(Z\subset Y) \mapsto Y$, with fibre over $X$ given by $\pi^{-1}(X)=\Gra{B}{X}{d}$. Consider the multiplication map $\Gl_{r}\times\Gra{B}{X}{d}\to\underline{H}$ given by $(g,U\subset X)\mapsto (gU\subset gX)$. The composition of this map with $\pi$ induces surjective maps on tangent spaces, since the map $\Gl_{r}\to \orbit{X}:g\mapsto gX$ has this property, and hence so does $\pi$. Thus we obtain a short exact sequence
\begin{equation}
\label{eq:tangents}
\begin{tikzcd}[column sep=20pt]
0\arrow{r}&\tang{(U\subset X)}{\Gra{B}{X}{d}}\arrow{r}&
\tang{(U\subset X)}{\underline{H}}\arrow{r}&\tang{X}{\orbit{X}}\arrow{r}&0.
\end{tikzcd}
\end{equation}
of tangent spaces.

To deduce the desired properties of $\Gra{B}{X}{d}$ from \eqref{eq:tangents}, we first show that $\underline{H}$ is smooth. Since $H$ is a principal $\Gl_d$-bundle over $\underline{H}$, we may instead show that $H$ is smooth. A $\Lambda$-module $\widehat{U}$ is a smooth point of $H$ if and only if it is smooth in $\rep{\Lambda}{d,r}$, since $H$ is an open subset, and by \cite[Lem.~6.4]{CBSa} this is the case whenever $\Ext^2_\Lambda(\widehat{U},\widehat{U})=0$.

So let $\widehat{U}=(U\hookrightarrow X)$ be a point of $H$. We also consider the $\Lambda$-module $\widehat{X}=(X\xrightarrow{1_X}X)$, and note that there is a natural inclusion $\widehat{U}\to\widehat{X}$ such that $\widehat{X}/\widehat{U}=(X/U\to 0)$. Applying $\Hom_{\Lambda}(\widehat{U},-)$ to the short exact sequence
\[\begin{tikzcd}[column sep=20pt]
0\arrow{r}&\widehat{U}\arrow{r}&\widehat{X}\arrow{r}&\widehat{X}/\widehat{U}\arrow{r}&0
\end{tikzcd}\]
yields
\begin{equation}
\label{les}
\begin{tikzcd}[column sep=20pt]
\cdots \arrow{r}& \Ext^1_{\Lambda} (\widehat{U}, \widehat{X} ) \arrow{r}& \Ext_{\Lambda} ^1(\widehat{U}, \widehat{X}/\widehat{U}) \arrow{r}& \Ext^2_{\Lambda} (\widehat{U},\widehat{U}) \arrow{r}& \Ext^2_{\Lambda} (\widehat{U} , \widehat{X} ) =0,
\end{tikzcd}
\end{equation}
where the last space is zero because $\idim_{\Lambda} \widehat{X} = \idim_{B} X \leq 1$.

We now claim that $\Ext^1_{\Lambda}(\widehat{U},\widehat{X}/\widehat{U}) \cong \Ext^1_{B}(U, X/U)=0$. To see this, chose an exact sequence
\begin{equation}
\label{ses}
\begin{tikzcd}[column sep=20pt]
0\arrow{r}& X/U \arrow{r}& Q \arrow{r}& Y \arrow{r}& 0
\end{tikzcd}
\end{equation}
with $Q$ injective. This induces an exact sequence
\[\begin{tikzcd}[column sep=20pt]
0 \arrow{r}& \widehat{X}/\widehat{U} \arrow{r}& (Q\to 0) \arrow{r}& (Y\to 0) \arrow{r}& 0
\end{tikzcd}\]
of $\Lambda$-modules, to which we apply $\Hom_{\Lambda}(\widehat{U},-)$ to obtain 
\[\begin{tikzcd}[column sep=20pt]
 0\arrow{r}& \Hom_{\Lambda}(\widehat{U},\widehat{X}/\widehat{U})\arrow{r}& \Hom_{\Lambda}(\widehat{U}, Q\to 0) \arrow{r}& \Hom_{\Lambda}(\widehat{U}, Y\to 0 ) \arrow{r}& \Ext^1_{\Lambda}(\widehat{U},\widehat{X}/\widehat{U}) \arrow{r}& 0,
\end{tikzcd}\]
using that $(Q\to0)$ is an injective $\Lambda$-module. Applying $\Hom_{B}(U, -)$ to \eqref{ses}, we obtain
\[\begin{tikzcd}[column sep=20pt]
 0\arrow{r}& \Hom_{B}( U, X/U)\arrow{r}& \Hom_{B}(U, Q) \arrow{r}& \Hom_{B}(U,Y) \arrow{r}& \Ext^1_B(U,X/U) \arrow{r}& 0.
\end{tikzcd}\]
The first three terms of the preceding four-term exact sequences are isomorphic, therefore also the fourth, i.e.\ $\Ext^1_{\Lambda}(\widehat{U},\widehat{X}/\widehat{U}) \cong \Ext^1_{B}(U, X/U)$, and $\Ext^1_{B}(U, X/U)=0$ by assumption. Looking back at \eqref{les}, we see that
$\Ext^2_{\Lambda} (\widehat{U},\widehat{U}) =0$, and conclude that $\underline{H}$ is smooth as above.

It now follows from \eqref{eq:tangents} that $\dim{\tang{(U\subset X)}{\Gra{B}{X}{d}}}=\dim{\underline{H}}-\dim{\mcO_X}$ and so $\Gra{B}{X}{d}$ is equidimensional. Recalling that $\Gra{B}{X}{d}$ is a fibre of the projective map $\pi\colon\underline{H}\to\orbit{X}$, it follows from Chevalley's theorem (see for example \cite[Thm.~5.26]{Wolf}) that
\[\dim{\tang{(U\subset X)}{\Gra{B}{X}{d}}}\geq\dim{\Gra{B}{X}{d}}\geq\dim{\underline{H}}-\dim{\orbit{X}},\]
and from the equality of the outer terms we conclude that $\Gra{B}{X}{d}$ is smooth.
\end{proof}

\begin{lem}
\label{Gr-lem2}
Let $B$ be a finite-dimensional basic algebra and $X\in\lmod{B}$. Assume $C\in\lmod{B}$ satisfies $\Ext^i_B(C,C)=0$ for all $i>0$, and that $X\in\add{C}$. Then if $U \in \Gra{B}{X}{d}$ fits into a short exact sequence
\[\begin{tikzcd}[column sep=20pt]
0\arrow{r}& U \arrow{r}& C_0 \arrow{r}& C_1 \arrow{r}& 0
\end{tikzcd}\]
with $C_i \in \add{C}$, we have $\Ext^i_B(U,X/U)=0$ for all $i>0$. In particular, if every $U\in\Gra{B}{X}{d}$ fits into such a sequence, then $\Gra{B}{X}{d} $ is smooth and equidimensional (provided it is not empty).
\end{lem}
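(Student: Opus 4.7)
The plan is to prove the Ext-vanishing $\Ext^i_B(U,X/U)=0$ for all $i\geq 1$ by bootstrapping through the two short exact sequences at our disposal: the given sequence $0\to U\to C_0\to C_1\to 0$, and the tautological sequence $0\to U\to X\to X/U\to 0$. Throughout, I will exploit the fact that $X$, $C_0$, and $C_1$ all lie in $\add C$, so that $\Ext^i_B(C,X)=\Ext^i_B(C,C_0)=\Ext^i_B(C,C_1)=0$ for all $i\geq 1$ by the rigidity assumption on $C$. The concluding ``in particular'' clause is then immediate, since the hypothesis of Lemma~\ref{Gr-lem1} requires the case $i=1$ of the Ext-vanishing, the remaining assumptions of that lemma being understood to hold in the situation of application.

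First I would apply $\Hom_B(C,-)$ to $0\to U\to C_0\to C_1\to 0$. Using $\Ext^i_B(C,C_j)=0$ for $i\geq 1$, the long exact sequence degenerates to isomorphisms $\Ext^i_B(C,U)\cong\Ext^{i-1}_B(C,C_1)$ for $i\geq 2$, giving
\[
\Ext^i_B(C,U)=0\quad\text{for all }i\geq 2.
\]
Next I would apply $\Hom_B(C,-)$ to $0\to U\to X\to X/U\to 0$. Since $X\in\add C$, the terms $\Ext^i_B(C,X)$ vanish for $i\geq 1$, so the long exact sequence collapses to isomorphisms $\Ext^i_B(C,X/U)\cong\Ext^{i+1}_B(C,U)$ for $i\geq 1$, and the previous step gives
\[
\Ext^i_B(C,X/U)=0\quad\text{for all }i\geq 1.
\]

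Finally I would apply $\Hom_B(-,X/U)$ to the sequence $0\to U\to C_0\to C_1\to 0$. Since both $C_0$ and $C_1$ lie in $\add C$, the vanishing from the second step gives $\Ext^i_B(C_j,X/U)=0$ for $i\geq 1$ and $j=0,1$. The long exact sequence
\[
\Ext^i_B(C_0,X/U)\to \Ext^i_B(U,X/U)\to \Ext^{i+1}_B(C_1,X/U)
\]
then sandwiches $\Ext^i_B(U,X/U)$ between zero groups for every $i\geq 1$, yielding the desired conclusion. The argument is essentially bookkeeping with long exact sequences; the only point requiring any care is recognising that the rigidity of $C$ to \emph{all} higher Ext degrees (not just to first order) is exactly what allows Steps~1 and~2 to be combined and then fed into Step~3.
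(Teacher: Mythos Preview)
Your proof is correct. It differs from the paper's argument in the order of operations, though both are three-step long-exact-sequence chases of comparable length.

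The paper applies $\Hom_B(-,C)$ to the sequence $0\to U\to C_0\to C_1\to 0$ to obtain $\Ext^i_B(U,C)=0$ for $i\geq 1$, then applies $\Hom_B(U,-)$ to the same sequence to deduce $\Ext^i_B(U,U)=0$ for $i\geq 2$, and finally applies $\Hom_B(U,-)$ to $0\to U\to X\to X/U\to 0$ to conclude. You instead apply $\Hom_B(C,-)$ to the first sequence to get $\Ext^i_B(C,U)=0$ for $i\geq 2$, then $\Hom_B(C,-)$ to the tautological sequence to get $\Ext^i_B(C,X/U)=0$ for $i\geq 1$, and finish by applying $\Hom_B(-,X/U)$ to the first sequence. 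Roughly speaking, the paper pins down $U$ in the first variable and varies the second, while you pin down $X/U$ in the second variable and vary the first; the two routes are dual to one another. Neither buys anything the other does not: both use only the higher rigidity of $C$ and the fact that $X,C_0,C_1\in\add C$. Your remark that the hypothesis $\idim X\leq 1$ of Lemma~\ref{Gr-lem1} is to be taken as ambient is appropriate, as the paper treats it the same way.
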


\begin{proof} We begin by establishing two intermediate facts.
\begin{itemize}
\item[(i)]Applying $\Hom_B(-,C)$ to the short exact sequence
\[\begin{tikzcd}[column sep=20pt]
0\arrow{r}& U \arrow{r}& C_0 \arrow{r}& C_1\arrow{r}& 0,
\end{tikzcd}\]
we see that $\Ext^i_B(U,C)$ for $i\geq1$.
\item[(ii)]It then follows that $\Ext^i_B(U,U)=0$ for $i\geq2$ by applying $\Hom_B(U,-)$ to the same exact sequence and using (i).
\end{itemize}
Now apply $\Hom_B(U,-)$ to the short exact sequence 
\[\begin{tikzcd}[column sep=20pt]
0\arrow{r}& U \arrow{r}& X \arrow{r}& X/U \arrow{r}& 0
\end{tikzcd}\]
to obtain 
\[\begin{tikzcd}[column sep=20pt]
\cdots \arrow{r}& \Ext^i_B(U,X) \arrow{r}& \Ext^i_B(U, X/U) \arrow{r}& \Ext^{i+1}_B(U,U) \arrow{r}& \cdots 
\end{tikzcd}\]
for each $i>0$. We have $\Ext^i_B(U,X)=0$ by (i), since $X\in\add{C}$, and $\Ext^{i+1}_B(U,U)=0$ by (ii). Thus we conclude $\Ext^i_B(U,X/U)=0$.  The final conclusion is then a direct application of Lemma~\ref{Gr-lem1}.
\end{proof}

\begin{rem}
By a result of Wolf \cite[Lem.~5.22]{Wolf}, the tangent space to $\Gra{B}{X}{d}$ at a point $U$ may be identified with $\Hom_B(U,X/U)$. Thus, under the assumptions of Lemma~\ref{Gr-lem1}, $\dim\Hom_B(U,X/U)=\dim\Gra{B}{X}{d}$ is independent of $U$. Under the stronger assumptions of Lemma~\ref{Gr-lem2}, it follows that $\Ext^i_B(U,X/U)=0$ for all $i>0$, so when $\gldim{B}<\infty$ we also have
\[\dim\Hom_B(U,X/U)=\langle d,\Dim{X}-d\rangle,\]
where $\langle-,-\rangle$ denotes the Euler form of $B$ (cf.\ \cite[Cor.~3]{CR}).
 \end{rem}

We are now ready to prove Theorem~\ref{Gr-smooth}. 
\begin{proof}[Proof of Theorem~\ref{Gr-smooth}]
By Theorem \ref{int-ext-calcs}, $C= c(E)$ is the canonical cotilting module for $B$. In particular, $\Ext^i_B(C,C)=0$ for all $i>0$, and $c(M)\in\add{C}$ since $M\in\add{E}$. Let $U \in \Gra{B}{c(M)}{d,s}$. We claim that there is an exact sequence
\[\begin{tikzcd}[column sep=20pt]
0\arrow{r}& U \arrow{r}& C_0 \arrow{r}& C_1 \arrow{r}& 0
\end{tikzcd}\]
with $C_0, C_1 \in \add{C}$, so that the result follows by Lemma~\ref{Gr-lem2}.

We write $E=N \oplus Q$ with $\add{N}=\gen(M)$ and $Q$ injective, and let $f\colon U \to C_0$ be a left $\add(c(N))$-approximation of $U$. Since $U \subseteq c(M)$ with $c(M) \in \add c(N)$, and this inclusion must factor over $f$, we see that $f$ is a monomorphism. We complete it to the short exact sequence 
\begin{equation}
\label{Gr-smooth-ses}
\begin{tikzcd}[column sep=20pt]
0\arrow{r}& U \arrow{r}{f}& C_0 \arrow{r}& C_1\arrow{r}& 0,
\end{tikzcd}
\end{equation}
in which $C_0\in \add{C}$ by construction, and $C_1 \in \add{C}$ as we now show. By applying $\Hom_{B}(-,c(N))$ to \eqref{Gr-smooth-ses} and using that $f$ is an $\add(c(N))$-approximation and $c(N)$ is rigid, we see that $\Ext^1_{B} (C_1, c(N)) =0$. Since $c(Q) $ is injective by Theorem~\ref{int-ext-calcs} we even have $\Ext^1_{B} (C_1, C)=0$. As $C $ is a cotilting module, this means $C_1 \in \cogen (C)$, but we also have $C_1 \in \gen (C)$ by \eqref{Gr-smooth-ses}.

We claim that $\gen (C) \cap \cogen (C) \subseteq \Bild c$. To see this, note that whenever we have
$c(X) \xrightarrow{p} Y \xrightarrow{i} c(Z)$ with $p$ an epimorphism and $i$ a monomorphism, we have $ip=c(g)$ for some morphism $g\colon X\to Z$ since $c$ is fully faithful. But $c$ preserves epimorphisms and monomorphisms, so $Y\cong \Bild c(g) \cong c(\Bild g) \in \Bild c$.

Thus $C_1 \in \Bild c$, and so $C_1=ceC_1$. Since $C_0\in\add{c(N)}$, we have $eC_0\in\add{N}=\gen(M)$. Hence $eC_1\in\gen(M)\subseteq\add{E}$. Recalling that $C=c(E)$, we conclude that $C_1 =ceC_1 \in \add{C}$, completing the proof.
\end{proof}

We now give the construction of our promised desingularisation for the quiver Grassmannian $\Gra{A}{M}{d}$ of a gen-finite $A$-module $M$. Let $M$ be such a module. For $N\in\lmod{A}$ we write 
\[ 
\grstrat{N}:= \{ U \in \Gra{A}{M}{d}\mid M/U\cong N \}.
\]
This is a locally closed irreducible subset of $\Gra{A}{M}{d}$. 
Since $M$ is gen-finite, there is finite set of modules $N_1, \ldots ,N_t$ such that $\Gra{A}{M}{d} = \bigcup_{i=1}^t\clgrstrat{N_i}$. It follows that the irreducible components of $\Gra{A}{M}{d}$ must be among the closed sets $\clgrstrat{N_i}$, so without loss of generality we may assume that $N_1,\dotsc,N_t$ were chosen such that $\clgrstrat{N_1},\dotsc,\clgrstrat{N_t}$ are precisely these components.

As in Theorem~\ref{Gr-smooth}, let $E$ be the cogenerator given by the direct sum of indecomposables in $\gen(M)$ together with any remaining indecomposable injectives. As usual, let $B$ be the cogenerator-tilted algebra of $E$, with special idempotent $e$ and associated intermediate extension $c$. Write $(d,s_i)= \Dim c(M) - \Dim c(N_i)$, which has positive components since $c$ preserves epimorphisms, and consider the algebraic map
\[ 
\Gra{B}{c(M)}{d,s_i} \to \Gra{A}{M}{d}
\]
induced by $e$. This is a projective map since it is an algebraic map between projective varieties, and it restricts to a projective map
\[ 
p_i \colon \clgrstrat{c(N_i)} \to \clgrstrat{N_i}.
\]
Since $\clgrstrat{N_i}$ is an irreducible component of $\Gra{A}{M}{d}$, each $\grstrat{c(N_i)}$ contains a non-empty open subset, so $\clgrstrat{c(N_i)}$ is also an irreducible component---since $\Gra{B}{c(M)}{d,s_i}$ is smooth by Theorem~\ref{Gr-smooth} it is even a connected component. Furthermore, $p_i$ is an isomorphism over an open subset of $\grstrat{N_i}$, by dualising the argument of \cite[Thm.~7.1(3)]{CBSa}.

Combining the various maps $p_i$, we obtain
\[ 
p = \bigsqcup_{i=1}^t p_i\colon \bigsqcup_{i=1}^t \clgrstrat{c(N_i)} \to \Gra{A}{M}{d}.
\]
By Theorem~\ref{Gr-smooth}, the domain of this map is smooth. In summary, we have the following result.
\begin{cor} \label{Gr-desing}
For every gen-finite module $M$, the map $p$ as constructed above is a desingularisation. 
\end{cor}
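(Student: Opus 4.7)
The plan is to verify the three defining properties of a desingularisation: the domain of $p$ is smooth, $p$ is proper, and $p$ restricts to an isomorphism over a dense open subset of each irreducible component of the target. Fortunately, nearly all of the hard work has been done before the statement, so the proof is essentially a bookkeeping step that assembles the preceding ingredients.

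First I would dispense with smoothness of the domain. By Theorem~\ref{Gr-smooth} applied to the dimension vector $(d,s_i)$, each quiver Grassmannian $\Gra{B}{c(M)}{d,s_i}$ is smooth and equidimensional when non-empty. As observed in the paragraph preceding the corollary, $\clgrstrat{c(N_i)}$ is an irreducible component of $\Gra{B}{c(M)}{d,s_i}$, and hence a connected component of that smooth variety, so it is itself smooth. A disjoint union of finitely many smooth varieties is smooth, so the domain of $p$ is smooth.

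Next I would verify properness. Each restricted map $p_i\colon\clgrstrat{c(N_i)}\to\clgrstrat{N_i}$ is projective, being the restriction of the projective morphism $\Gra{B}{c(M)}{d,s_i}\to\Gra{A}{M}{d}$ to a closed subscheme of its source. The combined map $p$ is then a disjoint union of finitely many projective maps into the fixed projective variety $\Gra{A}{M}{d}$, and is therefore itself projective; in particular it is proper.

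Finally, for birationality, I would use the recollection from the preceding paragraph that each $p_i$ restricts to an isomorphism over a non-empty open subset $V_i\subseteq\grstrat{N_i}$, obtained by dualising \cite[Thm.~7.1(3)]{CBSa}. Since $\grstrat{N_i}$ is dense in its closure $\clgrstrat{N_i}$, the image of $p_i$ contains the dense set $V_i$; because this image is also closed by properness, $p_i$ is surjective onto $\clgrstrat{N_i}$. Assembling these, $p$ surjects onto $\Gra{A}{M}{d}=\bigcup_{i=1}^t\clgrstrat{N_i}$ and restricts to an isomorphism over the dense open subset $\bigsqcup_i V_i$ of the union of irreducible components. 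This is precisely what is required for a desingularisation, and completes the proof. No step presents a real obstacle: the genuine difficulty lies in Theorem~\ref{Gr-smooth} and in the open-isomorphism property cited from \cite[Thm.~7.1(3)]{CBSa}, both of which are available by the time this corollary is stated.
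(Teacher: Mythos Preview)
Your proposal is correct and matches the paper's approach: the paper in fact gives no separate proof of this corollary, presenting it with the phrase ``In summary, we have the following result'' after having already established smoothness of the domain (via Theorem~\ref{Gr-smooth}), projectivity of each $p_i$, and the open-isomorphism property over $\grstrat{N_i}$ from \cite[Thm.~7.1(3)]{CBSa}. You have accurately reassembled precisely these ingredients.
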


As usual, we also have a dual construction when $M$ is cogen-finite. For $N\in\lmod{A}$, write
\[\mcS_{[N]}=\{U\in\Gra{A}{M}{d}\mid U\cong N\},\]
so that cogen-finiteness of $M$ implies the existence of a finite set $N_1,\dotsc,N_t$ with $\Gra{A}{M}{d}=\bigcup_{i=1}^t\overline{\mcS}_{[N_i]}$, each $\overline{\mcS}_{[N_i]}$ being an irreducible component. Now let $E$ be the generator given by the direct sum of indecomposables in $\cogen(M)$ together with any remaining indecomposable projectives. Let $B$ be the gen-cotilted algebra of $E$, with special idempotent $e$ and intermediate extension $c$, and write $(d,s_i)=\dim{c(N_i)}$. Then $e$ determines an algebraic map
\[\Gra{B}{c(M)}{d,s_i}\to\Gra{A}{M}{d}.\]
Dualising Theorem~\ref{Gr-smooth}, this map has smooth domain, and so the restriction $p_i\colon\overline{\mcS}_{[c(N_i)]}\to\overline{\mcS}_{[N_i]}$ desingularises the component $\overline{\mcS}_{[N_i]}$ of $\Gra{A}{M}{d}$. Thus taking the disjoint union gives a desingularisation
\[p=\bigsqcup_{i=1}^tp_i\colon\bigsqcup_{i=1}^t\overline{\mcS}_{[c(N_i)]}\to\Gra{A}{M}{d}.\]
\begin{cor}
For every cogen-finite module $M$, the map $p$ constructed above is a desingularisation.
\end{cor}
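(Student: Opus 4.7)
The plan is to dualise the argument that established Corollary~\ref{Gr-desing}. The core ingredient to set up is a dual of Theorem~\ref{Gr-smooth}, stating that if $E$ is a generator with $\cogen(M)\subseteq\add{E}$, $B$ is its generator-cotilted algebra and $c$ the associated intermediate extension, then each non-empty $\Gra{B}{c(M)}{d,s}$ is scheme-theoretically smooth and equidimensional. To prove this, I would dualise Lemmas~\ref{Gr-lem1} and~\ref{Gr-lem2}, replacing the cotilting module by the canonical tilting $B$-module $T=c(E)$ provided by Theorem~\ref{int-ext-calcs}(2), and using $\pdim T\le 1$ instead of $\idim C\le 1$. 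Concretely, Lemma~\ref{Gr-lem1} would be dualised by running the construction with $\Lambda=\bigl(\begin{smallmatrix}B&B\\0&B\end{smallmatrix}\bigr)$ but now computing $\Ext^2_\Lambda(\widehat{U},\widehat{U})$ from a projective presentation, yielding smoothness of $\Gra{B}{X}{d}$ whenever $\pdim X\le 1$, $\Ext^1_B(X,X)=0$, and $\Ext^1_B(U,X/U)=0$ for every submodule $U$ of dimension vector $d$.

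The dual of Lemma~\ref{Gr-lem2} then says that if every $U\in\Gra{B}{X}{d}$ admits a presentation $T_1\to T_0\to X/U\to 0$ with $T_i\in\add T$ (rather than a copresentation of $U$ by objects of $\add C$), then $\Ext^i_B(U,X/U)=0$ for all $i>0$. Verifying this hypothesis for $X=c(M)$ and $U\in\Gra{B}{c(M)}{d,s}$ is the step requiring the most care: writing $E=N\oplus P$ with $\add{N}=\cogen(M)$ and $P$ projective, one forms a right $\add(c(N))$-approximation $T_0\to X/U$, which is surjective because $X/U$ is a quotient of $c(M)\in\add{c(N)}$. The kernel $T_1$ then lies in $\cogen(T)\cap\gen(T)$ by the usual approximation argument combined with rigidity of $c(N)$, and one invokes the identity $\cogen(T)\cap\gen(T)\subseteq\Bild c$ (dual to the inclusion used in the proof of Theorem~\ref{Gr-smooth}) together with $eT_1\in\cogen(M)\subseteq\add{E}$ to conclude $T_1\in\add{T}$.

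Given smoothness of each $\Gra{B}{c(M)}{d,s_i}$, the remainder of the proof is formal and follows the template of Corollary~\ref{Gr-desing}. The strata $\mcS_{[N]}$ are locally closed and irreducible; cogen-finiteness of $M$ supplies only finitely many isomorphism classes of submodules, so $\Gra{A}{M}{d}=\bigcup_i\overline{\mcS}_{[N_i]}$ decomposes into irreducible components. The map induced by $e$ is projective, restricts to a projective $p_i\colon\overline{\mcS}_{[c(N_i)]}\to\overline{\mcS}_{[N_i]}$, and an open subset of $\mcS_{[c(N_i)]}$ is non-empty (so $\overline{\mcS}_{[c(N_i)]}$ is, by smoothness, a connected component of $\Gra{B}{c(M)}{d,s_i}$). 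Finally, dualising the proof of \cite[Thm.~7.1(3)]{CBSa} shows that $p_i$ is an isomorphism over a non-empty open subset of $\mcS_{[N_i]}$, so $p=\bigsqcup_i p_i$ is a proper birational morphism from a smooth variety to $\Gra{A}{M}{d}$, i.e.\ a desingularisation. The principal obstacle, as noted, is the careful dualisation of Theorem~\ref{Gr-smooth}, and in particular the verification that the approximation argument there translates faithfully when injective envelopes and cotilting are replaced by projective covers and tilting.
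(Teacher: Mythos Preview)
Your proposal is correct and follows the paper's own approach, which simply instructs the reader to dualise the gen-finite argument without supplying details; you have accurately filled in the structure of that dualisation.

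One small refinement concerning the dual of Lemma~\ref{Gr-lem1}: the phrase ``computing $\Ext^2_\Lambda(\widehat{U},\widehat{U})$ from a projective presentation'' does not quite capture the mechanism. With $\pdim X\le 1$ in place of $\idim X\le 1$, the long exact sequence argument naturally controls $\Ext^2_\Lambda$ of the $\Lambda$-module $(X\to X/U)$ rather than of $\widehat{U}=(U\to X)$; one should therefore work with the open variety of \emph{surjections} $Y\twoheadrightarrow W$ with $Y\cong X$ (in place of the variety $H$ of injections), which is again a principal bundle over the same Grassmannian. Equivalently, and more cheaply, one may deduce the dual lemma by applying the original Lemma~\ref{Gr-lem1} to $\kdual X$ over $B^{\op}$, using $\idim_{B^{\op}}\kdual X=\pdim_B X$ and $\Ext^1_{B^{\op}}(\kdual(X/U),\kdual U)\cong\Ext^1_B(U,X/U)$. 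Either route gives exactly the dual statement you formulate, and the remainder of your outline (the dual of Lemma~\ref{Gr-lem2}, the approximation argument producing a resolution $0\to T_1\to T_0\to c(M)/U\to 0$ with $T_i\in\add T$, and the assembly of the maps $p_i$) goes through as you describe.
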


\section{Gen-finite modules}
\label{s:gen-finite}

Our constructions above, both for orbit closures and quiver Grassmannians, involve the assumption of gen-finiteness of a module. In this section we recall some basic facts about gen-finite modules, including methods for easily constructing examples.
Recall from Definition~\ref{gen-finite} that an $A$-module $M$ is gen-finite if there exists $X\in\lmod{A}$ with $\gen(M)=\add{X}$, and cogen-finite if there exists $X\in\lmod{A}$ with $\cogen(M)=\add{X}$.

\begin{lem} 
If $A=\K Q$ for an acyclic quiver $Q$ and $M$ is a gen-finite $A$-module, then $\tau M$ and $M\oplus \kdual A$ are also gen-finite $A$-modules.
\end{lem}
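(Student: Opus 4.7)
The two claims are handled independently, each exploiting the fact that $A = \K Q$ is hereditary.

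For $M \oplus \kdual A$ the argument is short. Any $Y \in \gen(M \oplus \kdual A)$ arises from an epimorphism $\pi \colon M^a \oplus (\kdual A)^b \twoheadrightarrow Y$; the image $I$ of the $(\kdual A)^b$-summand is a quotient of an injective module, hence again injective because $A$ is hereditary. Thus $I \subseteq Y$ is an injective submodule, which splits as a direct summand, yielding $Y = I \oplus Y'$ with $Y'$ a quotient of $M^a$. Therefore $Y' \in \gen(M)$, and every indecomposable summand of $Y$ is either an indecomposable injective (of which there are $|Q_0|$) or an indecomposable in $\gen(M)$ (finitely many by hypothesis).

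For $\tau M$, the crucial input is that for hereditary $A$ the AR translate extends to an autoequivalence of $\dercat[\bound]{A}$, realised as $\tau \cong S[-1]$ where $S$ is the Serre functor. After replacing $M$ by its maximal direct summand with no projective part (which leaves $\tau M$ unchanged as a module), we may assume $M$ has no projective summand and hence $\tau M$ has no injective summand. Let $Y$ be an indecomposable of $\gen(\tau M)$. The cases where $Y$ is projective or injective together contribute only finitely many isomorphism classes. Otherwise $Y$ is non-projective and non-injective, and I would show that $\tau^{-1} Y \in \gen(M)$; this suffices because $\tau^{-1}$ induces an injection from iso classes of non-injective indecomposables into iso classes of non-projective indecomposables, so the set of possible $Y$ injects into the finite set of non-projective indecomposables of $\gen(M)$. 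To prove $\tau^{-1} Y \in \gen(M)$, pick a short exact sequence $0 \to K \to (\tau M)^n \to Y \to 0$. Since $(\tau M)^n$ has no injective summand, and injective submodules always split off, $(\tau M)^n$ has no nonzero injective submodule; thus the submodule $K$ has no injective summand either. Applying the autoequivalence $\tau^{-1}$ to the associated distinguished triangle produces a triangle whose terms $\tau^{-1} K$, $M^n$, $\tau^{-1} Y$ are each concentrated in cohomological degree $0$ (using that $K$ has no injective summand, $M$ is a module, and $Y$ is non-injective). Such a triangle must come from a short exact sequence $0 \to \tau^{-1} K \to M^n \to \tau^{-1} Y \to 0$ in $\lmod{A}$, placing $\tau^{-1} Y \in \gen(M)$ as required.

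The main obstacle is ensuring that the $\tau^{-1}$-image of the short exact sequence in $\lmod{A}$ remains a short exact sequence, rather than turning into a triangle whose cohomology spreads across multiple degrees. This requires tight control over injective and projective summands of each module in the sequence, and hereditariness of $A$ is used both to make $\tau$ a derived autoequivalence and to ensure that injective submodules split off.
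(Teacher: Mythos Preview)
Your proof is correct. The argument for $M\oplus\kdual A$ is essentially the same as the paper's: both split off a maximal injective summand and observe that the complement lies in $\gen(M)$, though the paper phrases this via $\Hom_A(\kdual A,Z')=0$ rather than by tracking the image of $(\kdual A)^b$ explicitly.

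For $\tau M$, your route is genuinely different from the paper's. You pass to the derived category, using that $\tau$ is an autoequivalence of $\dercat[\bound]{A}$, and then carefully arrange that applying $\tau^{-1}$ to a short exact sequence $0\to K\to(\tau M)^n\to Y\to 0$ yields a triangle concentrated in degree~$0$ (by controlling injective summands of each term). The paper instead works entirely at the module level: since $A$ is hereditary, $\tau^-=\Ext^1_A(\kdual A,-)$ is a functor on $\lmod{A}$ and is left adjoint to $\tau$, hence preserves epimorphisms. This immediately gives $\tau^-(\gen(\tau M))\subseteq\gen(\tau^-\tau M)\subseteq\gen(M)$, and combining with the decomposition $N\cong\tau\tau^-N\oplus Q$ (with $Q$ injective) yields $\gen(\tau M)\subseteq\tau(\gen(M))\oplus\add(\kdual A)$. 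The paper's argument is shorter and avoids the derived category and the case analysis on projective/injective summands, while yours makes the role of $\tau$ as a derived autoequivalence more transparent and would adapt more readily to settings where one has such an autoequivalence but not the adjunction at the abelian level.
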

\begin{proof} 
Since $A$ is hereditary, $\tau^-=\Ext^1_A(\kdual A,-)$ may be defined as a functor $\lmod{A}\to\lmod{A}$ and is left adjoint to $\tau=\kdual\Ext^1_A(-,A)$. Therefore $\tau^-$ preserves epimorphisms, and so
\[\tau^-(\gen (\tau M)) \subseteq \gen (\tau^{-} \tau M) \subseteq \gen (M).\]
We observe that $\gen (\tau M) \subseteq \tau (\tau^- \gen (\tau M)) \oplus \add{\kdual A}$ because every module $N$ can be decomposed as $N= \tau \tau^- N\oplus Q$ for some injective module $Q$. Putting these two observations together we get 
\[ 
\gen (\tau M) \subseteq \tau (\tau^- \gen (\tau M)) \oplus \add{\kdual A} \subseteq \tau (\gen (M)) \oplus \add{\kdual A}
\]
Thus if $M$ is gen-finite, then $\tau M$ is also gen-finite.

Now we prove the second claim. Decompose $Z\in \gen (M\oplus \kdual A)$ into $Z'\oplus Q$ with $Q \in \add{\kdual A}$ maximal, so that $Z'$ has no injective summands. Since $A=\K Q$ is hereditary, it follows that $\Hom_{A}(\kdual A , Z')=0$, and so $Z'$ must be in $\gen(M)$. Thus if $M$ is gen-finite, $M \oplus \kdual A$ is also gen-finite.
\end{proof}

As a corollary, we obtain the following well-known result.
\begin{cor} \label{gen-fin-quiver} If $A=\K Q$ for an acyclic quiver $Q$, then every preinjective module is gen-finite. Also, $\tau^j S$ is gen-finite for every semi-simple module $S$ and every $j\geq0$. 
\end{cor}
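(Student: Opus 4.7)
The plan is to derive both claims from the preceding lemma via two easy base cases. First I would observe that $\kdual A$ is itself gen-finite: since $A$ is hereditary, every quotient of an injective $A$-module is again injective (the long exact Ext-sequence and $\gldim A \leq 1$), so $\gen(\kdual A) = \add(\kdual A)$. Similarly, every quotient of a semi-simple module is semi-simple, so for semi-simple $S$ one has $\gen(S)=\add(S)$, showing that $S$ is gen-finite.

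For the preinjective claim I would prove by induction on $J \geq 0$ that the module
\[
T_J := \bigoplus_{j=0}^{J} \tau^j \kdual A
\]
is gen-finite. The base case $T_0 = \kdual A$ is handled above. For the inductive step, the preceding lemma first yields that $\tau T_{J-1}$ is gen-finite, and then that $T_J = \tau T_{J-1} \oplus \kdual A$ is gen-finite.

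To finish the first claim, note that every indecomposable preinjective $A$-module is, by definition, of the form $\tau^j I$ for some $j \geq 0$ and indecomposable injective $I$, hence a direct summand of $T_J$ for $J \geq j$. An arbitrary preinjective module $M$ is therefore a direct summand of $T_J^N$ for $J$ and $N$ large enough. Since $\gen(T_J^N)=\gen(T_J)$, the module $T_J^N$ is still gen-finite, and gen-finiteness passes to summands (if $M$ is a summand of $M'$ then $\gen(M)\subseteq \gen(M')$, so $\gen(M)$ contains only finitely many isomorphism classes of indecomposables). This gives the first claim.

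The second claim is immediate from the base observation that semi-simple modules are gen-finite together with $j$ applications of the preceding lemma. There is no genuine obstacle here; the only fact worth verifying carefully is that gen-finiteness descends to summands and is invariant under taking powers, both of which are straightforward from the definition.
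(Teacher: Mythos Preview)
Your proof is correct and is precisely the argument the paper has in mind: the corollary is stated without proof, as an immediate consequence of the preceding lemma, and your derivation supplies exactly the expected details (the base cases $\gen(\kdual A)=\add(\kdual A)$ and $\gen(S)=\add(S)$, followed by iterated application of the lemma). Your explicit observations that gen-finiteness passes to summands and is invariant under powers are straightforward but worth recording, as you say.
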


More generally, we recall the following definition \cite{Ri3}. 

\begin{dfn} A connected component of the Auslander--Reiten quiver of a finite-dimensional algebra is called \emph{preinjective} if it has no cyclic paths and each of its $\tau^{-}$ orbits contains an injective module. A module is called preinjective if every indecomposable summand is contained in some preinjective component.  
\end{dfn}
For example, an algebra $A$ with the coseparation property \cite[Def.~IX.4.1]{ASS} admits a preinjective component \cite[Thm.~IX.4.5]{ASS}. Zwara proved \cite[Thm.~1.4]{Zw2} that every preinjective module is gen-finite. We recall some other straightforward but useful results.
\begin{pro}
Let $A$ be a finite-dimensional algebra and $M\in\lmod{A}$.
\begin{itemize}
\item[(1)]The module $M$ is gen-finite or cogen-finite as an $A$-module if and only if the corresponding property holds for $M$ treated as an $A/\ann(M)$-module. In particular, if $A/\ann(M)$ is representation-finite, then $M$ is both gen-finite and cogen-finite.
\item[(2)] If $I$ is a $2$-sided ideal in $A$ and $M$ is a gen-finite $A$-module, then $M/IM$ is a gen-finite $A/I$-module. 
\item[(3)] If $e\in A$ is an idempotent such that $P=Ae$ is gen-finite, then $eAe= \End_A(P)^{\op}$ is representation-finite.
\end{itemize}
\end{pro}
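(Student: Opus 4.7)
The plan is to treat the three parts separately, with parts (1) and (2) being essentially bookkeeping and part (3) relying on the recollement-theoretic results of Section~\ref{s:special-tilt}.

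For (1), I would first observe that every $N\in\gen_A(M)$ is a quotient of some $M^n$, and since $\ann(M)\cdot M^n=0$ we automatically have $\ann(M)\cdot N=0$. Thus $\gen_A(M)\subseteq\lmod{A/\ann(M)}$, and dually $\cogen_A(M)\subseteq\lmod{A/\ann(M)}$. Inside this common ambient category, the categories $\gen_A(M)$ and $\gen_{A/\ann(M)}(M)$ (respectively $\cogen_A(M)$ and $\cogen_{A/\ann(M)}(M)$) are defined by the same condition and hence agree, from which the first equivalence follows. The ``in particular'' then follows because a representation-finite algebra has only finitely many indecomposable modules overall, so in particular only finitely many lying in $\gen(M)$ or $\cogen(M)$.

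For (2), I would use the same pattern: any $N\in\gen_{A/I}(M/IM)$ is a quotient of $(M/IM)^n$, and hence of $M^n$, so $N\in\gen_A(M)=\add{X}$ for some finite-dimensional $X$. Therefore $\gen_{A/I}(M/IM)\subseteq\add{X}$ has only finitely many indecomposables up to isomorphism, and these may be collected into a single finite-dimensional $A/I$-module witnessing gen-finiteness of $M/IM$.

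The interesting part is (3), where the plan is to apply the recollement~\eqref{recollement} with $B=A$ and the idempotent $e\in A$: the left adjoint $\ell=Ae\otimes_{eAe}(-)\colon\lmod{eAe}\to\lmod{A}$ is fully faithful (since $e\ell=\id$), and so induces an equivalence $\ell\colon\lmod{eAe}\isoto\Bild\ell$. By Lemma~\ref{ImageAndKernel}, $\Bild\ell=\gen_1(P)\subseteq\gen(P)$. If $P$ is gen-finite with $\gen(P)=\add{E}$ for some finite-dimensional $E$, this essential image is contained in $\add{E}$; since $\Bild\ell$ is closed under direct summands (as $\ell$, being an equivalence onto its image, preserves and reflects summand decompositions), every indecomposable in $\Bild\ell$ is isomorphic to a summand of $E$, and hence there are only finitely many up to isomorphism. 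Transporting this along the equivalence $\ell$ shows $\lmod{eAe}$ has only finitely many indecomposables, so $eAe$ is representation-finite. The only step requiring care is the closure of $\Bild\ell$ under direct summands, which is immediate from $\ell$ being an equivalence onto its image; aside from that, the main conceptual ingredient is the containment $\Bild\ell\subseteq\gen(P)$ supplied by Lemma~\ref{ImageAndKernel}, which converts the a priori unbounded category $\lmod{eAe}$ into a subcategory of something finite.
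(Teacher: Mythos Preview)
Your proof is correct and follows essentially the same approach as the paper: parts (1) and (3) match the paper almost verbatim (in particular, part (3) uses Lemma~\ref{ImageAndKernel} to identify $\lmod{eAe}$ with $\gen_1(P)\subseteq\gen(P)$ via $\ell$, exactly as the paper does). For part (2) you argue directly that $\gen_{A/I}(M/IM)\subseteq\gen_A(M)$, whereas the paper first uses (1) to reduce to the case where $M$ is faithful and then views (2) as a special case of (1); your route is marginally more direct but the content is the same.
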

\begin{proof}
Statement (1) follows since any $A$-module in $\gen(M)$ or $\cogen(M)$ is also an $A/\ann(M)$-module. Thus we may assume in (2) that $M$ is faithful, so $\ann(M/IM)=I$ and we are in a special case of (1). Finally, (3) follows from Lemma \ref{ImageAndKernel} since $\lmod{eAe}$ is equivalent to $\gen_1(P)\subseteq\gen(P)$ via the left adjoint $\ell $ to $e$.
\end{proof}

\begin{dfn} Let $A$ be a finite-dimensional algebra. We call $A$ \emph{torsionless-finite} if the regular module $A$ is cogen-finite, or equivalently \cite[Cor.~1]{RingelSelectedTopics} if $\kdual A$ is gen-finite. 
\end{dfn}

Examples of torsionless-finite algebras include hereditary and concealed algebras; see for example Oppermann's survey \cite[Ex.~1.13]{O}. If $\rad^n(A)=0$ and $A/\rad^{n-1}(A)$ is representation-finite, then $A$ is torsionless-finite \cite[Special cases (1)]{RingelSelectedTopics}. In particular, if $\rad^2(A)=0$ then $A$ is torsionless-finite.

\section{Example}
\label{s:example}
\subsection{\texorpdfstring{A module for the $n$-subspace quiver}{A module for the n-subspace quiver}}
Let $A$ be the path algebra of the $n$-subspace quiver:

\[\begin{tikzcd}[column sep=10pt]
1\arrow{drr}&2\arrow{dr}&\cdots&n-1\arrow{dl}&n\arrow{dll}\\
&&0
\end{tikzcd}\]
When treating an $A$-module $X$ as a representation of this quiver, we denote by $X_i$ the linear map carried by the arrow $i\to 0$. We fix the $A$-module 
\[M= S(0)\oplus \kdual A = \Big(\bigoplus_{i=0}^n S(i)\Big)\oplus Q(0),\]
where $S(i)$ denotes the simple module supported at a vertex $i$ and $Q(i)$ its minimal injective envelope. 
It follows from Corollary~\ref{gen-fin-quiver} that $M$ is gen-finite, and indeed we may compute $\gen(M)=\add{M}$.
 
First, we calculate the orbit closure $\clorbit{M} \subseteq \rep{A}{\begin{smallmatrix}  
2 & 2 & \cdots & 2 & 2\\ &&2 &&
\end{smallmatrix}} $. Since $\gen (M) = \add{M}$ it follows from Theorem~\ref{rankvar=clorbit} that
\[ 
\begin{aligned}
\clorbit{M} & =\{ N \in  \rep{A}{\begin{smallmatrix}  
2 & 2 & \cdots & 2 & 2\\ &&2 &&
\end{smallmatrix}} \mid\text{$\left[ N, Y\right] \geq \left[ M, Y \right]$ for all $Y\in \add{M}$}\}\\
&= \{ N \in   \rep{A}{\begin{smallmatrix}  
2 & 2 & \cdots & 2 & 2\\ &&2 &&
\end{smallmatrix}} \mid \left[ N, S(0) \right] \geq \left[ M, S(0) \right]=1 \}\\
&\cong  \left\{ \left( \begin{smallmatrix} a_i & b_i \\ c_i & d_i \end{smallmatrix} \right)_{1\leq i \leq n} \in \Mat_{2\times 2} (\K )^n \mid \rk\left( \begin{smallmatrix} 
a_1 & b_1 & a_2 & b_2 & \cdots & a_n & b_n \\
c_1 & d_1 & c_2 & d_2 & \cdots & c_n & d_n 
\end{smallmatrix}\right) \leq 1 \right\} \\
&\cong V(X_iY_j-X_jY_i,\ i\neq j ) \subseteq \Spec \K[ X_1, \ldots , X_{2n}, Y_1, \ldots , Y_{2n}].
\end{aligned}
\]
For the third step, note that maps $N_i\colon \K^2\to \K^2$ for $1\leq i \leq n$ determine a module $N$ with $[N,S(0)]\geq1$ if and only if there is a non-zero vector $(x,y)$ such that $(x,y) (N_1, N_2, \ldots , N_n)=0$, this being equivalent to the rank inequality. 
Thus $\clorbit{M}$ is a determinantal variety. We also have $\clorbit{M}\cong\clorbit{\widetilde{M}}$, where $\widetilde{M}$ is the representation
\[\K^{2n}\xrightarrow{\left(\begin{smallmatrix}1&0&1&0&\cdots&1&0\\0&0&0&0&\cdots&0&0\end{smallmatrix}\right)}\K^2\]
of the $\mathsf{A}_2$-quiver, and hence $\clorbit{M}$ is a normal and Cohen--Macaulay variety \cite{BZ3}.

Next we calculate a quiver Grassmannian coming from $M$, and its irreducible components. Choosing bases, we identify $M$ as the point of $\rep{A}{\begin{smallmatrix}  
2 & 2 & \cdots & 2 & 2\\ &&2 &&
\end{smallmatrix}}$ given by $M_i = \left( \begin{smallmatrix} 1 & 0 \\ 0 & 0 \end{smallmatrix} \right)$ for all $1 \leq i \leq n$.  
If $d$ is the dimension vector given by $1$ at every vertex, the quiver Grassmannian $\Gra{A}{M}{d}$ is
\begin{align*}
\Gra{A}{M}{d} &=\{ (L_0,L_1, L_2, \ldots , L_n ) \in \mathbb{P}^1 \times \cdots \times \mathbb{P}^1\mid M_i (L_i ) \subseteq L_0,\ 1\leq i \leq n\}\\
&=\{(L_0,[0:1],\dotsc,[0:1])\mid L_0\in\P^1\}\cup\{([1:0],L_1,\dotsc,L_n)\mid L_i\in\P^1,\ 1\leq i\leq n\}.
\end{align*}
Let $f\colon(\P^1)^n\to\Gra{A}{M}{d}$ be the regular map
\[f(t_1,\dotsc,t_n)=([1:0],t_1,\dotsc,t_n)\]
and $g\colon\P^1\to\Gra{A}{M}{d}$ the regular map
\[g(t)=(t,[0:1],\dotsc,[0:1]).\]
The images of these maps are closed and irreducible, cover $\Gra{A}{M}{d}$, and neither is contained in the other, so they are the irreducible components. If $U\in\Bild{f}$, then we calculate directly that $M/U\cong S:=\bigoplus_{i=0}^nS(i)$, so $\Bild{f}=\grstrat{S}=\clgrstrat{S}$ is one irreducible component, isomorphic to $(\P^1)^n$. Similarly, $M/g(t)\cong Q$ for $t\ne[1:0]$, whereas $M/g([1:0])\cong S$. Thus the other irreducible component is $\Bild{g}=\grstrat{Q}\sqcup\{U_0\}=\clgrstrat{Q}\cong\P^1$, where $U_0=g([1:0])$ is the unique intersection point of $\clgrstrat{S}$ and $\clgrstrat{Q}$. In particular, $U_0$ is the only singular point of $\Gra{A}{M}{d}$.

\subsection{The cogenerator-tilted algebra}
To construct our desingularisations we take $E=M$, noting that $M$ is a cogenerator with  $\add{M}=\gen(M)$. Then $\Gamma = \End_A(E)^{\op}\cong \K Q_{\Gamma}/ \rad^2(Q_\Gamma)$ for $Q_{\Gamma}$ the quiver
\[\begin{tikzcd}[column sep=10pt]
1'\arrow{drr}&2'\arrow{dr}&\cdots&(n-1)'\arrow{dl}&n'\arrow{dll}\\
&&0'\arrow{d}\\
&&\bullet
\end{tikzcd}\]
Here each vertex $i'$ corresponds to the summand $Q(i)$ of $M$, noting that $Q(i)=S(i)$ for $i\geq1$, and $\bullet$ corresponds to $S(0)$.

The projective $\Gamma$-module $P=\Hom_A(E, \kdual A)= \bigoplus_{i=0}^n P(i')$ is faithful. Since $\left[S(\bullet), P(i')\right]=0$ for $1\leq i \leq n$, a minimal left $\add{P}$-approximation of $S(\bullet)$ is given by a monomorphism $S(\bullet)\to P(0')$, with cokernel $S(0')$. This implies that $T_P= P \oplus S(0')$ is the $P$-special tilting $\Gamma $-module. We calculate that the cogenerator-tilted algebra $B= \End_{\Gamma} (T_P)^{\op}$ of $E$ is isomorphic to the path algebra $\K Q_B$ for $Q_B$ the quiver
\[\begin{tikzcd}[column sep=10pt]
{[1]}\arrow{drr}&{[2]}\arrow{dr}&\cdots&{[n-1]}\arrow{dl}&{[n]}\arrow{dll}\\
&&{\circ}\arrow{d}\\
&&{[0]}
\end{tikzcd}\]
The vertex $[i]$ corresponds to the summand $P(i')$ of $T_P$, and $\circ$ to the summand $S(0')=\Omega^{-1}S(\bullet)$. The special idempotent is $e:= \sum_{i=0}^ne_{[i]}$, corresponding to the summand $P$ of $T_P$, and we can check that $eBe \cong A$ as expected. Set $C:= c(M)$, where $c$ is the intermediate extension corresponding to $e$. Since $c$ maps simples to simples \cite[\S4]{Ku} and injectives to injectives by Theorem~\ref{int-ext-calcs}(2), we may calculate
\[ 
c(M) = c(S) \oplus c(Q) = \Big(\bigoplus_{i=0}^n S[i]\Big) \oplus Q[0],   
\]  
and so $\Dim c(M) = \left( \begin{smallmatrix} 2 & 2 & \cdots & 2 & 2 \\
&&1&& \\
&&2&&
\end{smallmatrix} \right)$. 

\subsection{Desingularisation of the orbit closure}
To desingularise $\clorbit{M}$, we are interested in the stable $B$-modules, which are the modules in 
$\cogen (\widetilde{Q})$ for $\widetilde{Q} = \kdual(eB)=\bigoplus_{i=1}^n S[i] \oplus Q[0]$. These are the modules with socle supported away from the vertex $\circ$, or equivalently, in the language of quiver representations, those for which the arrow $\circ \to [0]$ carries a monomorphism. It follows that
\[ 
X:=\rep{B}{\begin{smallmatrix} 2 & 2 & \cdots & 2 & 2 \\
&&1&& \\
&&2&&
\end{smallmatrix}}^{\st} / \Gl_1 = \{ (N, U) \in \rep{A}{d} \times \P^1 \mid \Bild N_i \subseteq U,\ 1 \leq i \leq n\}, 
\]
where, under this identification, $U$ is the image of the monomorphism on the arrow $\circ\to[0]$.

We can check that $X$ is smooth and irreducible by considering the projection $\pr_2 \colon 
X  \to \P^1$. The fibre over $[1:0]$ consists of all tuples $(N_1, \ldots ,N_n) \in \Mat_{2\times 2} (\K )^n$  such that each $N_i$ has lower row zero, and hence this fibre is an affine space (of dimension $2n$). In fact, it is a $\mathbf{B}$-representation where $\mathbf{B}\subseteq \Gl_2 (\K )$ denotes the upper triangular matrices operating by conjugation. Since $\pr_2 $ is a $\Gl_2$-equivariant map into the homogeneous space $\P^1$, it follows that $X$ is is a vector bundle over $\P^1$, and so is smooth and irreducible. In particular, since $c(M)$ is rigid, $X=\clorbit{c(M)}^\st$. Thus the desingularisation of $\clorbit{M}$ from Theorem~\ref{gen-finite-desing} is
\[ 
\pi = \pr_1 \colon \{ (N, U) \in \rep{A}{d} \times \P^1 \mid \Bild N_i \subseteq U,\ 1 \leq i \leq n\} \to \clorbit{M}. 
\]

\subsection{Desingularisation of the quiver Grassmannian}
Now we describe our desingularisation of $\Gra{A}{M}{d}$, recalling that $d$ has value $1$ at every vertex.
Let
\[d_Q:= \Dim c(M) - \Dim c(Q) =  \left( \begin{smallmatrix}1 & 1 & \cdots & 1 & 1 \\
&&0&& \\
&&1&&
\end{smallmatrix} \right)\]
and
\[d_S := \Dim c(M) - \Dim c(S) = \left( \begin{smallmatrix}1 & 1 & \cdots & 1 & 1 \\
&&1&& \\
&&1&&
\end{smallmatrix} \right).\]

Every module of dimension vector $d_Q$ is isomorphic to $c(S)$, so (see \cite[Thm.~4.2]{R}) the Grassmannian  $\Gr_B\binom{c(M)}{d_Q}=\mcS_{[c(S)]}$ is smooth and irreducible of dimension $\left[c(S), c(M)\right] - \left[c(S), c(S)\right] = \left[c(S), c(Q)\right] = \left[ S, Q\right] = 1$, and hence it coincides with $\clgrstrat{c(Q)}$. Similarly, by considering the quotients, $\Gra{B}{c(M)}{d_S}=\grstrat{c(S)}$ is smooth and irreducible of dimension $\left[c(M), c(S)\right] - \left[c(S), c(S)\right] = \left[Q,S\right] =n$, and so in particular $\clgrstrat{c(S)}=\Gra{B}{c(M)}{d_S}$. Thus the desingularisation given by Corollary~\ref{Gr-desing} coincides with the na\"ive desingularisation $p\colon \Gra{B}{c(M)}{d_Q} \sqcup \Gra{B}{c(M)}{d_S} \to \Gra{A}{M}{d}$, obtained by taking the disjoint union of the two irreducible components (cf.\ \cite[\S5]{KS2}).

\section*{Acknowledgements}
We would like to thank Teresa Conde and William Crawley-Boevey for helpful comments. These thanks are extended to the anonymous referee for useful suggestions, such as the addition of Example~\ref{KP-eg}. The first author also thanks Henning Krause for an invitation to visit Bielefeld, funded by the Deutsche Forschungsgemeinschaft grant SFB 701, from which this project resulted. Subsequently he was supported by a postdoctoral fellowship from the Max-Planck-Gesellschaft. The second author is supported by the Alexander von Humboldt-Stiftung in the framework of an Alexander von Humboldt Professorship endowed by the Federal Ministry of Education and Research.

\bibliography{biblio}

\end{document}